\numberwithin{equation}{subsection}
\theoremstyle{plain}
\newtheorem{thm}{Theorem}[subsection]
\newtheorem{lem}[thm]{Lemma}
\newtheorem{prop}[thm]{Proposition}
\newtheorem{cor}[thm]{Corollary}
\newtheorem*{thm*}{Theorem}
\newtheorem*{lem*}{Lemma}
\newtheorem*{prop*}{Proposition}
\newtheorem*{cor*}{Corollary}
\theoremstyle{definition}
\newtheorem{defn}[thm]{Definition}
\newtheorem*{defn*}{Definition}
\newtheorem{ex}[thm]{Example}
\newtheorem{con}[thm]{Construction}{}
\newtheorem{rem}[thm]{Remark}
\newtheorem*{rem*}{Remark}
\newtheorem*{ack}{Acknowledgements}{}
\newtheorem{qn}[thm]{Question}{}
\theoremstyle{remark}
{}
{}
{}
\def\to{\longrightarrow} 
\def\Set{\mathsf{Set}}
\def\AA{\mathbb{A}}
\def\CC{\mathbb{C}}
\def\FF{\mathbb{F}}
\def\PP{\mathbb{P}}
\def\ZZ{\mathbb{Z}}
\def\sfA{\mathsf{A}}
\def\sfB{\mathsf{B}}
\def\sfD{\mathsf{D}}
\def\sfE{\mathsf{E}}
\def\sfI{\mathsf{I}}
\def\sfJ{\mathsf{J}}
\def\sfK{\mathsf{K}}
\def\sfL{\mathsf{L}}
\def\sfM{\mathsf{M}}
\def\sfN{\mathsf{N}}
\def\sfP{\mathsf{P}}
\def\sfQ{\mathsf{Q}}
\def\sfR{\mathsf{R}}
\def\sfS{\mathsf{S}}
\def\sfT{\mathsf{T}}
\def\sfX{\mathsf{X}}
\def\mcF{\mathcal{F}}
\def\mcO{\mathcal{O}}
\def\mcP{\mathcal{P}}
\def\mcS{\mathcal{S}}
\def\mfm{\mathfrak{m}}
\def\mfp{\mathfrak{p}}
\def\mfq{\mathfrak{q}}
\def\op{\mathrm{op}}
\def\b{\mathrm{b}}
\def\unit{\mathbf{1}}
\DeclareMathOperator{\add}{add}
\DeclareMathOperator{\id}{id}
\DeclareMathOperator{\Ob}{Ob}
\DeclareMathOperator{\Spec}{Spec}
\DeclareMathOperator{\Spc}{Spc}
\DeclareMathOperator{\supp}{supp}
\DeclareMathOperator{\thick}{thick}
\DeclareMathOperator{\Thick}{Thick}
\DeclareMathOperator{\Ind}{Ind}
\DeclareMathOperator{\Zar}{Zar}
\DeclareMathOperator{\pt}{pt}
\DeclareMathOperator{\spt}{spt}
\DeclareMathOperator{\fSpcnt}{fSpcnt}
\DeclareMathOperator{\Spcnt}{Spcnt}
\DeclareMathOperator{\MSpec}{MSpc}
\def\Frm{\mathsf{Frm}}
\def\CohFrm{\mathsf{CohFrm}}
\def\SFrm{\mathsf{SFrm}}
\def\CLat{\mathsf{CLat}}
\def\CjSLat{\mathsf{CjSLat}}
\def\Tcat{\mathsf{tcat}}
\def\ttcat{\mathsf{ttcat}}
\def\rttcat{\mathsf{rttcat}}
\def\hTcat{\mathsf{tcat}_\wedge}
\def\Sob{\mathsf{Sob}}
\def\Sierp{\mathbf{2}}
\def\lefta{d}
\def\lefter{\Omega}
\definecolor{internationalkleinblue}{rgb}{0.0, 0.18, 0.65}
\title{Approximating triangulated categories by spaces}
\author{Sira Gratz}
\thanks{The first author was supported by Villum Fonden and by EPSRC (Grant Number EP/V038672/1).}
\address{Sira Gratz, Aarhus University, Department of Mathematics, Ny Munkegade 118, bldg. 1530
DK-8000 Aarhus C, Denmark
}
\email{sira@math.au.dk}
\urladdr{https://sites.google.com/view/siragratz}
\author{Greg Stevenson}
\address{Greg Stevenson, Aarhus University, Department of Mathematics, Ny Munkegade 118, bldg. 1530
DK-8000 Aarhus C, Denmark
}
\email{greg@math.au.dk}
\urladdr{https://sites.google.com/view/gregstevenson}
\dedicatory{To Henning Krause on the occasion of his 60th birthday}
\keywords{}
\begin{document}

\begin{abstract}
\noindent We initiate a systematic study of lattices of thick subcategories for arbitrary essentially small triangulated categories. To this end we give several examples illustrating the various properties these lattices may, or may not, have and show that as soon as a lattice of thick subcategories is distributive it is automatically a spatial frame. We then construct two non-commutative spectra, one functorial and one with restricted functoriality, that give universal approximations of these lattices by spaces.
\end{abstract}

\maketitle

\setcounter{tocdepth}{1}
\tableofcontents



\section{Introduction}

What do the bounded derived categories of the projective plane $\PP^2_\CC$, the $3$-Kronecker quiver $K_3$, and the group algebra of the symmetric group on $4$ letters over $\overline{\FF}_2$ have in common? One answer is that we don't really understand any of them. For instance, we have very little information on what the possible localizations of these categories are. We don't understand what the possible kernels of such localizations, i.e.\ the thick subcategories, are\textemdash{}let alone what the resulting quotient looks like. Even in these small examples there is much to be done; it was only recently shown by Pirozhkov \cite{P2phantoms} that there are no phantom subcategories, i.e.\ thick subcategories not detected by additive invariants, for $\PP^2$ and the corresponding question is still open for $3$-Kronecker.

What we understand quite well are the \emph{monoidal} localizations. Both $\sfD^\mathrm{b}(\PP^2)$ and $\sfD^\mathrm{b}(\overline{\FF}_2S_4)$ are equipped with compatible symmetric monoidal structures and the possible monoidal localizations were classified by Thomason \cite{Thomclass} in the former case and Benson, Carlson, and Rickard in the latter \cite{BCR}. The study of these monoidal localizations comprises tensor triangular geometry, affectionately known as tt-geometry, which was introduced by Balmer \cite{BaSpec} and has rapidly matured, aided by the plethora of tantalizing examples, into a very active field of study.

The idea is to functorially assign to every tensor triangulated category (tt-category) a space parameterizing thick ideals, i.e.\ thick subcategories closed under tensoring and which are radical. This creates opportunities to work geometrically and topologically to study the behaviour of tt-categories and to probe their objects in terms of an associated support theory. 

The secret sauce is \emph{distributivity} of the lattice of radical thick tensor ideals. Distributivity is the bare minimum for a lattice to be isomorphic to the lattice of open subsets of some space. One can interpret intersections of ideals in terms of the monoidal structure and so, as for radical ideals in a commutative ring, intersections of ideals distribute over sums of ideals. The insight is that the lattices in tt-geometry are always lattices of opens and so one can pass, without loss of information, from the lattice to the associated space\textemdash{}the spectrum.

The aim of this paper is to initiate a systematic and general study of lattices of thick subcategories of triangulated categories. To this end we make a contribution to the taxonomy of properties that such a lattice may, or may not, possess including distributivity and various weakenings thereof. This occupies Section~\ref{sec:shithappens}. Our first main observation is that for lattices of thick subcategories the bare minumum requirement, i.e.\ distributivity, suffices to be the lattice of opens of a topological space.

\begin{thm*}[Corollary~\ref{cor:spatial}]
Let $\sfK$ be an essentially small triangulated category with lattice of thick subcategories $\Thick(\sfK)$. If $\Thick(\sfK)$ is distributive then there is a sober topological space $X$, given by prime thick subcategories of $\sfK$, such that $\Thick(\sfK)$ is isomorphic to the lattice of open subsets of $X$.
\end{thm*}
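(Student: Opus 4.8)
The plan is to establish the (equivalent) assertion that $\Thick(\sfK)$ is a \emph{spatial frame}, and then to read off the space from the standard frame--space adjunction. Recall that for any frame $L$ the set $\pt(L)$ of points is a sober space when topologised by the sets $\{x : x(a)=1\}$, $a\in L$ (these are closed under finite intersections and arbitrary unions, so they form the entire topology), and the comparison map $L\to\mcO(\pt(L))$ is a surjective frame homomorphism which is injective exactly when $L$ is spatial. So, taking $X:=\pt(\Thick(\sfK))$ --- whose points are the completely prime filters on $\Thick(\sfK)$, equivalently the prime (meet-prime) thick subcategories $\sfP$ of $\sfK$, and whose opens are the sets $\{\sfP : \sfT\nsubseteq\sfP\}$ for $\sfT\in\Thick(\sfK)$ --- it remains only to prove that $\Thick(\sfK)$ is a frame and that it has enough primes.

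\emph{The lattice is a frame.} First I would recall (or verify) that $\Thick(\sfK)$ is an \emph{algebraic} lattice: it is complete, with $\bigwedge_i\sfT_i=\bigcap_i\sfT_i$ and $\bigvee_i\sfT_i=\thick(\bigcup_i\sfT_i)$; its compact elements are precisely the singly generated thick subcategories $\thick(x)$, since $\thick(x_1,\dots,x_n)=\thick(x_1\oplus\cdots\oplus x_n)$ and since any object of a thick closure is built in finitely many steps from finitely many generators, so that $\thick(\bigcup_i\sfT_i)$ is the directed union of the $\thick(\bigcup_{i\in J}\sfT_i)$ over finite $J$; and each $\sfT=\bigvee_{x\in\sfT}\thick(x)$ is a join of compacts. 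Then I would upgrade the hypothesised finite distributivity to the frame law $\sfA\cap\bigvee_i\sfT_i=\bigvee_i(\sfA\cap\sfT_i)$: one inclusion is automatic, and for the other it suffices by algebraicity to test on a compact $\thick(x)\subseteq\sfA\cap\bigvee_i\sfT_i$; compactness gives $\thick(x)\subseteq\sfT_{i_1}\vee\cdots\vee\sfT_{i_n}$, whence $\thick(x)=\thick(x)\cap\sfA\subseteq\sfA\cap(\sfT_{i_1}\vee\cdots\vee\sfT_{i_n})=(\sfA\cap\sfT_{i_1})\vee\cdots\vee(\sfA\cap\sfT_{i_n})$ by finite distributivity. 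This already delivers the ``spatial frame'' statement apart from spatiality itself.

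\emph{Enough primes: the crux.} I must show that every $\sfT$ is the intersection of the prime thick subcategories containing it; equivalently, that $\sfT_1\nsubseteq\sfT_2$ implies there is a prime $\sfP$ with $\sfT_2\subseteq\sfP$ and $\sfT_1\nsubseteq\sfP$. By algebraicity I can pick an object $x\in\sfT_1\setminus\sfT_2$, and then apply Zorn's Lemma to the non-empty family of thick subcategories that contain $\sfT_2$ but not $x$; this family is closed under unions of chains precisely because $x$ is a single object (equivalently, $\thick(x)$ is compact), so it has a maximal element $\sfP$, necessarily proper since $x\notin\sfP$. Finally $\sfP$ is prime: if $\sfA\nsubseteq\sfP$ and $\sfB\nsubseteq\sfP$, then maximality forces $x\in\sfP\vee\sfA$ and $x\in\sfP\vee\sfB$, so $x\in(\sfP\vee\sfA)\cap(\sfP\vee\sfB)=\sfP\vee(\sfA\cap\sfB)$ by distributivity, giving $\sfA\cap\sfB\nsubseteq\sfP$. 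Such a $\sfP$ is exactly what was needed.

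Putting the pieces together, $\Thick(\sfK)$ is a spatial frame, hence $\Thick(\sfK)\xrightarrow{\ \sim\ }\mcO(X)$ for the sober space $X=\pt(\Thick(\sfK))$ of prime thick subcategories described above, which is the statement. The one genuinely non-formal ingredient is the construction of primes: it combines distributivity (to see that a maximal ``$x$-avoiding'' thick subcategory is prime) with the algebraicity of $\Thick(\sfK)$ (to produce a compact witness and run Zorn), and is precisely the classical fact that every \emph{algebraic} frame is spatial. I expect this to be the main obstacle; in particular one cannot shortcut via ``coherent frames are spatial'', because $\sfK$ itself need not be a compact element of $\Thick(\sfK)$ --- for instance when $\sfK$ has no classical generator.
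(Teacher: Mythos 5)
Your proposal is correct and follows essentially the same route as the paper: distributivity plus the algebraicity of $\Thick(\sfK)$ (compact elements being the singly generated thick subcategories, cf.\ Lemma~\ref{lem:finite} and Lemma~\ref{lem:compact}) upgrades to the frame law as in Proposition~\ref{prop:dist}, and your Zorn's lemma construction of a maximal thick subcategory avoiding a single object $x$, shown to be meet-prime via $(\sfP\vee\sfA)\cap(\sfP\vee\sfB)=\sfP\vee(\sfA\cap\sfB)$, is exactly the argument of Theorem~\ref{thm:spatial}, after which Stone duality and the sobriety of the point space give Corollaries~\ref{cor:spatial} and~\ref{cor:sober}. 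Your closing remark that one cannot shortcut via coherence, since $\sfK$ need not be a compact element of $\Thick(\sfK)$, is also consonant with the paper's discussion of coherence.
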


However, distributivity does not hold in many examples. None of the projective plane, $3$-Kronecker, or $\overline{\FF}_2S_4$ have bounded derived categories with a distributive lattice of thick subcategories. Indeed, when there is some discrete combinatorial data involved in understanding objects, for example the twisting sheaves and their mutations on $\PP^2$ or the preprojectives and preinjectives for $3$-Kronecker, this seems to rule out distributivity. So, how are we to approach thick subcategories in these settings?

As a first step in this direction we construct, in a universal way, two spaces one can associate to \emph{any} essentially small triangulated category $\sfK$ in order to get a measure of its lattice of thick subcategories. In the absence of distributivity these spaces cannot faithfully reflect the structure of $\Thick(\sfK)$. The first one, which is functorial with respect to all exact functors, adds information and the second loses information unless the lattice is distributive.

Let us denote by $\Tcat$ the category of essentially small triangulated categories and exact functors, and by $\Sob$ the category of sober spaces and continuous maps.

\begin{thm*}[Section~\ref{ssec:universal}]
There is a functor $\fSpcnt\colon \Tcat^\op \to \Sob$, and an associated notion of support, such that for each $\sfK \in \Tcat$ the pair $(\fSpcnt \sfK, \supp)$ is the terminal support theory. 
\end{thm*}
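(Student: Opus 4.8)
The plan is to construct $\fSpcnt\sfK$ by brute force as the universal (terminal) recipient of a support theory, using the lattice $\Thick(\sfK)$ as raw material. First I would pin down the relevant notion: a \emph{support theory} for $\sfK$ should consist of a sober space $X$ together with an assignment $\supp\colon \Ob\sfK \to \{\text{closed subsets of }X\}$ (or, dually, a frame homomorphism $\mathcal{O}(X) \to \Thick(\sfK)$) satisfying the usual axioms---$\supp 0 = \varnothing$, $\supp(A \oplus B) = \supp A \cup \supp B$, $\supp(\Sigma A) = \supp A$, and the ``half-exactness'' condition $\supp C \subseteq \supp A \cup \supp B$ for a triangle $A \to B \to C \to \Sigma A$---together with functoriality in the appropriate direction, so that continuous maps $Y \to X$ compatible with the supports form a category. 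The claim is that this category of support theories for a fixed $\sfK$ has a terminal object $(\fSpcnt\sfK, \supp)$.

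The key step is to exhibit the terminal object concretely. Since $\Thick(\sfK)$ need not be distributive, one cannot simply take its spectrum; instead I would pass to the universal frame quotient (or rather, universal spatial coreflection) of $\Thick(\sfK)$. Concretely: each object $A$ generates a thick subcategory $\thick(A) \in \Thick(\sfK)$, and any support theory is determined by a frame map $f\colon \mathcal{O}(X) \to \Thick(\sfK)$ together with the requirement that the closed sets $X \setminus f(\supp A)$ behave correctly; dualizing, $\supp$ corresponds to a map from $\Thick(\sfK)$ to (closed subsets of) $X$ that preserves finite joins and arbitrary meets on the nose. So the terminal support theory should be the space $\fSpcnt\sfK$ whose frame of opens is the \emph{free frame on the meet-semilattice $\Thick(\sfK)^{\op}$} (equivalently, the universal frame through which every join-and-meet-preserving map out of $\Thick(\sfK)$ factors), and $\supp A$ is the image of $\thick(A)$. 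The universal property of this free frame---that frame maps out of it correspond to meet-semilattice maps out of $\Thick(\sfK)^{\op}$, hence to the data of a support theory---then delivers terminality essentially formally, after passing to spaces via $\mathcal{O} \dashv \pt$ and using sobriety to ensure the counit is the relevant iso.

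Functoriality of $\fSpcnt\colon \Tcat^{\op} \to \Sob$ follows because an exact functor $F\colon \sfK \to \sfL$ induces a map $\Thick(\sfL) \to \Thick(\sfK)$ by preimage $\sfM \mapsto F^{-1}(\sfM)$, which preserves arbitrary meets and---crucially---finite joins (here one uses that preimages of thick subcategories are thick and that $F$ sends a generating triangle to a triangle, so $F^{-1}(\thick_\sfL(F A)) \supseteq \thick_\sfK(A)$ is compatible with the support axioms); applying the free-frame construction and $\pt$ yields $\fSpcnt F\colon \fSpcnt\sfL \to \fSpcnt\sfK$, and functoriality is inherited from the functoriality of the free frame. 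The main obstacle I anticipate is precisely verifying that $F^{-1}$ preserves finite joins of thick subcategories---joins in $\Thick$ are not just unions, so $F^{-1}(\sfM_1 \vee \sfM_2)$ versus $F^{-1}(\sfM_1) \vee F^{-1}(\sfM_2)$ requires a genuine argument using the triangulated structure---and, relatedly, confirming that the assignment $A \mapsto \thick(A)$ interacts with triangles so that $\supp$ genuinely satisfies the half-exactness axiom at the level of the free frame rather than just $\Thick(\sfK)$. Everything else---sobriety of $\pt$ of a frame, the adjunction $\mathcal{O} \dashv \pt$, and the universal property of free frames on meet-semilattices---is standard pointfree topology that I would cite rather than reprove.
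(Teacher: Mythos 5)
There is a genuine gap, and it sits exactly at your key step: you misidentify the universal object. The support axioms only encode join-type conditions, and the precise classifying statement (Proposition~\ref{prop:support}) is that support data valued in a sober space $X$ correspond to maps $\Thick(\sfK)\to\mcO(X)$ preserving \emph{arbitrary joins} and nothing else; no meet condition can be extracted when you ``dualize'', since nothing in the axioms relates $\supp$ to intersections and there is no object-level handle on $\thick(x)\cap\thick(y)$. (Also note the map goes \emph{out of} $\Thick(\sfK)$, which is not a frame in general, and the paper works with open-valued supports precisely because unions of closed supports over a thick subcategory need not be closed.) Accordingly, the terminal support theory is $\pt$ of the \emph{free spatial frame on the complete join-semilattice} $\Thick(\sfK)$, i.e.\ the left adjoint of Proposition~\ref{prop:adjoint1} applied to $T(\sfK)$. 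Your two candidate descriptions are not equivalent to each other, and neither gives this. The ``universal frame for join-and-meet-preserving maps out of $\Thick(\sfK)$'' is the other construction in the paper ($\lefter$, giving $\Spcnt$), which is not functorial on all of $\Tcat$ and can even be empty (Example~\ref{ex:pp1}), so it cannot receive the universal datum and is not terminal. The free frame on the meet-semilattice $\Thick(\sfK)^\op$ only classifies maps respecting \emph{finite} joins; such a map is not determined by its values on finitely generated thick subcategories, so the factorization through the associated space fails to be unique and terminality again fails\textemdash{}compatibility with arbitrary joins, via Lemma~\ref{lem:finite}, is exactly what pins the factoring map down.

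Your functoriality argument also rests on a false claim: $F^{-1}$ does \emph{not} preserve finite joins of thick subcategories, so the ``genuine argument'' you anticipate cannot exist. For the derived base change $F=f^*\colon \sfD^{\mathrm{perf}}(k)\to\sfD^{\mathrm{perf}}(k\times k)$ along the diagonal $f\colon k\to k\times k$, the preimages of $\thick(k\times 0)$ and $\thick(0\times k)$ are both $0$, while the preimage of their join is all of $\sfD^{\mathrm{perf}}(k)$; this is dual to the failure of meet-preservation in Example~\ref{ex:letsnotmeet}. The paper instead uses the covariant assignment $\sfM\mapsto\thick(F\sfM)$, which preserves arbitrary joins (Lemmas~\ref{lem:commute} and \ref{lem:thickfunctor}); the contravariance of $\fSpcnt$ then comes from applying the left adjoint and passing to points via Stone duality, not from taking preimages.
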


One can think of $\fSpcnt \sfK$ as a coarse moduli space for thick subcategories of $\sfK$, where the topology reflects the various inclusion relations. The second space is, in some sense, a refinement of $\fSpcnt$ which is still free (in the technical sense), but subject to more constraints.

\begin{thm*}[Section~\ref{sec:pf}]
For any essentially small triangulated category $\sfK$ there exists a sober space $\Spcnt \sfK$ such that if $\Thick(\sfK)$ is distributive then $\mcO\Spcnt \sfK$, the lattice of open subsets of $\Spcnt \sfK$ is isomorphic to $\Thick(\sfK)$. This space comes with a canonical comparison map $\Spcnt \sfK \to \fSpcnt \sfK$.
\end{thm*}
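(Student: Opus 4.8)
The plan is to realise $\Spcnt\sfK$ as the space of points of a frame canonically attached to the lattice $\Thick(\sfK)$, namely its reflection into frames, and then to extract all three assertions from soft properties of that reflection together with Corollary~\ref{cor:spatial}. Since $\sfK$ is essentially small, $\Thick(\sfK)$ is a genuine set and, by Section~\ref{sec:shithappens}, a complete lattice, so it makes sense to ask whether the frame law $a\wedge\bigvee_i b_i=\bigvee_i (a\wedge b_i)$ holds in it. Viewing complete lattices, with the morphisms that preserve arbitrary joins and finite meets, as an infinitary algebraic category, the frames form a reflective subvariety cut out exactly by this law; hence there is a frame $\mathcal{L}(\sfK)$ and a universal such morphism $\eta\colon\Thick(\sfK)\to\mathcal{L}(\sfK)$, concretely the quotient of $\Thick(\sfK)$ by the frame congruence generated by distributivity. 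I would then \emph{define} $\Spcnt\sfK:=\mathrm{pt}(\mathcal{L}(\sfK))$. Being the space of points of a frame, $\Spcnt\sfK$ is sober and $\mcO\Spcnt\sfK$ is the spatial reflection of $\mathcal{L}(\sfK)$; this disposes of existence and soberness.

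For the distributive case: if $\Thick(\sfK)$ is distributive then, by Corollary~\ref{cor:spatial}, it is a spatial frame, hence in particular already a frame. Therefore the reflection unit $\eta$ is an isomorphism, $\mathcal{L}(\sfK)\cong\Thick(\sfK)$, and spatiality, which is precisely the assertion that $\mathcal{L}(\sfK)\to\mcO\,\mathrm{pt}(\mathcal{L}(\sfK))$ is an isomorphism, gives $\mcO\Spcnt\sfK\cong\Thick(\sfK)$, as required. This also makes precise the sense in which $\Spcnt$ loses information in general: $\mathcal{L}(\sfK)$ recovers $\Thick(\sfK)$ exactly when the latter is already a frame.

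For the comparison map I would put a support theory on $\Spcnt\sfK$ and then invoke the terminality of $(\fSpcnt\sfK,\supp)$ from Section~\ref{ssec:universal}. Set $\supp X:=\Spcnt\sfK\setminus\lnot\,\eta(\thick X)$, where $\lnot$ denotes the pseudocomplement in the frame $\mcO\Spcnt\sfK$ and $\eta(\thick X)$ is identified with the open set it names. One checks directly that this is a support theory: $\eta(\thick 0)=\bot$ gives $\lnot\eta(\thick 0)=\top$, hence $\supp 0=\emptyset$; $\thick(\Sigma X)=\thick(X)$ gives $\supp\Sigma X=\supp X$; from $\thick(X\oplus Y)=\thick X\vee\thick Y$, preservation of joins by $\eta$, and the identity $\lnot(a\vee b)=\lnot a\wedge\lnot b$ valid in any frame, one gets $\supp(X\oplus Y)=\supp X\cup\supp Y$; and for a triangle $X\to Y\to Z$ one has $\thick Z\subseteq\thick X\vee\thick Y$, so $\lnot\eta(\thick Z)\ge\lnot\eta(\thick X)\wedge\lnot\eta(\thick Y)$ and therefore $\supp Z\subseteq\supp X\cup\supp Y$. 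Terminality of $\fSpcnt\sfK$ then yields a unique morphism of support theories $(\Spcnt\sfK,\supp)\to(\fSpcnt\sfK,\supp)$, i.e.\ the desired canonical continuous map $\Spcnt\sfK\to\fSpcnt\sfK$.

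The main obstacle is conceptual rather than computational: one must identify the correct universal property (reflection into frames along join- and finite-meet-preserving maps, rather than a weaker or stronger variant) and then contend with the fact that $\mathcal{L}(\sfK)$ itself need not be spatial. The geometrically meaningful object is therefore $\Spcnt\sfK=\mathrm{pt}(\mathcal{L}(\sfK))$, and one must check that spatialisation costs nothing where it matters; this works precisely because, by Corollary~\ref{cor:spatial}, $\mathcal{L}(\sfK)=\Thick(\sfK)$ is already spatial in the distributive case. A final point, which is where the ``restricted functoriality'' of the theorem enters, is that $\eta$, and hence the comparison map, are natural in $\sfK$ only along those exact functors for which $\Thick(-)$ is suitably functorial (preserving arbitrary joins and finite meets of thick subcategories).
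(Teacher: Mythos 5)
Your construction of $\Spcnt\sfK$ as the space of points of a frame reflection $\mathcal{L}(\sfK)$ of $\Thick(\sfK)$ is correct and is essentially equivalent to the paper's definition, which instead passes through the left adjoint $\lefter\colon\CLat\to\SFrm$ to the inclusion of \emph{spatial} frames (Proposition~\ref{prop:adjoint2}, Definition~\ref{defn:spcnt}). The two constructions give the same space because $\Frm(\mathcal{L}(\sfK),\Sierp)\cong\CLat(\Thick(\sfK),\Sierp)$ by adjunction, and the image of the reflection unit generates $\mathcal{L}(\sfK)$ under arbitrary joins and finite meets, so the induced topologies agree; the paper records the identification $\Spcnt\sfK\cong\CLat(T(\sfK),\Sierp)$ directly after Definition~\ref{defn:spcnt}. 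Sobriety and the distributive case are handled just as in the paper (Lemma~\ref{lem:sober}, Corollary~\ref{cor:spatial}, Theorem~\ref{thm:class}): if $\Thick(\sfK)$ is distributive it is already a spatial frame, so both reflections are isomorphisms. So far, this is a mildly repackaged version of the paper's argument, and it is fine.

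The gap is in your construction of the comparison map. You define $\supp X := \Spcnt\sfK\setminus\lnot\,\eta(\thick X)$, the complement of a pseudocomplement, which is a \emph{closed} subset (indeed it is the closure of the open set named by $\eta(\thick X)$). But the universal property you invoke from Section~\ref{ssec:universal} concerns $\vee$-support data valued in \emph{open} subsets: a $\vee$-support datum is by definition a map $\Ob\sfK\to\mcO(X)$, and it is this that corresponds bijectively to morphisms $T(\sfK)\to\mcO(X)$ in $\CjSLat$ (Proposition~\ref{prop:support}). Your $\supp$ is therefore not a $\vee$-support datum in the sense required, and the terminality of $(\fSpcnt\sfK,\supp)$ cannot be applied to it as stated. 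The closed-set conventions you have in mind belong to the Balmer-style formalism, whereas this paper works with the Hochster dual throughout. The repair is simple: just set $\sigma(X)$ to be the open subset of $\Spcnt\sfK$ named by $\eta(\thick X)$ (i.e.\ its image under the composite $\Thick(\sfK)\to\mathcal{L}(\sfK)\to\mcO\Spcnt\sfK$); the four axioms then follow from $\eta$ and the spatialization map being join-preserving, with no pseudocomplements needed. That said, even the repaired route is more elaborate than the paper's: the paper observes in Remark~\ref{rem:SvsfS} that for any complete lattice $L$ the set inclusion $\CLat(L,\Sierp)\hookrightarrow\CjSLat(L,\Sierp)$ is continuous for the respective topologies, which \emph{is} the comparison map $\Spcnt\sfK\to\fSpcnt\sfK$, obtained with no support theory at all. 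Your universal-property route does yield the same map, but you should either replace the closed-valued $\supp$ with the open-valued $\sigma$, or take the direct route.
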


These spaces are constructed and studied in Sections \ref{sec:ff} and \ref{sec:pf} respectively. The space $\Spcnt \sfK$, which we dub the `non-tensor spectrum' of $\sfK$, admits natural comparison maps to the Balmer spectrum when $\sfK$ is a rigid tt-category and to the noncommutative spectrum of \cite{nakano2019} when $\sfK$ is a monoidal triangulated category such that every 2-sided thick $\otimes$-ideal is semiprime. It also recovers the spectrum defined by Matsui \cite{matsui2019} in examples e.g.\ singularity categories of complete intersections.

However, caveat emptor: the non-tensor spectrum can be empty in non-trivial examples! In fact, one of the original motivations for this work was to illustrate that, without distributivity hypotheses, one can't develop a reasonable theory of supports on a space which is functorial and compatible with other theories. One seems to require some refined notion combining the continuous and discrete pieces of the classification. A tempting direction for future work is to consider spaces with some form of decoration e.g.\ those appearing in \cite{StevensonAntieau} and \cite{GratzZvonareva}.

We conclude the paper with two sections discussing examples and a small selection of open problems respectively.

\begin{ack}
We are grateful to David Ploog and Charalampos Verasdanis for helpful comments on a preliminary version of this work. We also thank the anonymous referee for their suggested improvements and for pointing out an inaccuracy.
\end{ack}



\section{Preliminaries on lattices}

In this section we recall various facts about lattices, spaces, and the duality between them that we will use throughout. We do not aim to give a complete overview, and recommend \cite{sketches} and \cite{stonespaces} as useful references.

\subsection{The players}

We begin by introducing various characters and the relations between them.

\begin{defn}\label{clat}
A \emph{lattice} $L$ is a poset which has a least upper bound and greatest lower bound for every finite non-empty set of elements, i.e.\ has finite non-empty joins and meets which we denote by $\vee$ and $\wedge$ respectively. We say $L$ is \emph{complete} if it admits meets of arbitrary sets of elements, and note that this implies that arbitrary joins also exist (equivalently $L$ is complete if it admits arbitrary joins and the existence of meets follows). We denote the top and bottom of a lattice, whenever they exist, by $1$ and $0$ respectively. Note that a complete lattice has both a top and a bottom.

There are two different categories of complete lattices that we will be concerned with. We denote by $\CjSLat$ the category of complete lattices (thought of as complete join semilattices) and poset maps which preserve arbitrary joins. We note that the bottom element of a lattice, being the empty join, is preserved by any such map, and that we do not require the top element to be preserved. We denote by $\CLat$ the category of complete lattices and poset maps which preserve arbitrary joins and finite meets, and hence both the top and bottom.
\end{defn}

For our purposes complete lattices are not sufficient. We ultimately want to work with lattices that arise as the poset of open subsets of a space. This leads us to the notion of spatial frames.

\begin{defn}\label{frm}
A \emph{frame} $F$ is a complete lattice in which finite meets distribute over arbitrary joins. We denote by $\Frm$ the category of frames with poset maps which preserve finite meets and arbitrary joins. 
\end{defn}

\begin{defn}
Let $L$ be a lattice. An element $l\in L$ is \emph{compact} (or \emph{finitely presented}) if, for any set of elements $\{m_i \mid i\in I\} \subseteq L$, we have
\[
l \leq \bigvee_{i\in I}m_i \text{ if and only if } \exists i_1,\ldots, i_n \text{ such that } l \leq \bigvee_{j=1}^n m_{i_j}.
\]
Viewing $L$ as a category this just says that $l$ is a finitely presented object, i.e.\ $L(l,-)$ commutes with filtered colimits.
\end{defn}

\begin{defn}\label{cohfrm}
A frame $F$ is \emph{coherent} if its finitely presented objects
form a bounded sublattice (in particular $1$ must be finitely presented) and generate $F$ under joins. 
\end{defn}

\begin{rem}
From the categorical perspective this is just asking that $F$ is locally finitely presented and the finitely presented objects of $F$ are closed under finite limits and colimits.
\end{rem}

There is an obvious fully faithful inclusion functor $\Frm \to \CLat$ and a (faithful) forgetful functor $\Frm \to \CjSLat$.

We denote by $\Sierp$ the unique frame with two elements $\{0 \leq 1\}$.

\begin{defn}\label{pt}
A \emph{point} of a complete lattice $L$ is a morphism $L \to \Sierp$ in $\CLat$. The set of points of $L$ is thus $\CLat(L,\Sierp)$. 
\end{defn}

Since $\Frm$ is a full subcategory of $\CLat$ this reduces to the usual notion for frames.

We take this opportunity to recall another definition which is intimately connected to the notion of points.

\begin{defn}\label{defn:meetprime}
An element $\mfp\neq 1$ of a lattice $L$ is called \emph{meet-prime} if whenever $l \wedge m \leq \mfp$ then we have $l 
\leq \mfp$ or $m \leq \mfp$.
\end{defn}

\begin{rem}
There is a bijection between points of $L$ and the set $\mcP$ of meet-prime elements of $L$. Explicitly, this is given by sending a meet-prime element $\mfp$ to the point $p$ defined by
\[
p(a) = \begin{cases}
0  & \text{ if } a\leq \mfp \\
1  & \text{ if } a\nleq \mfp.
\end{cases}
\]
The inverse is given by sending a point $p$ to the element
\[
\mfp = \bigvee\{ a \mid p(a)=0\}
\]
which is easily checked to be meet-prime.
\end{rem}

\begin{defn}\label{sfrm}
A frame $F$ is \emph{spatial} if it has enough points in the sense that whenever $a\nleq b$ in $F$ there exists a point $p\colon F\to \Sierp$ such that $p(a)=1$ and $p(b)=0$.

We denote by $\SFrm$ the full subcategory of $\Frm$ consisting of the spatial frames. 
\end{defn}

We now turn to the topological incarnation of spatial frames and the passage between spaces and frames.

\begin{defn}\label{Sob}
A topological space $X$ is \emph{sober} if every irreducible closed subset of $X$ has a unique generic point. We denote by $\Sob$ the category of Sober spaces and continuous maps.
\end{defn}

We recall that $\SFrm$ and $\Sob$ are dual, i.e.\ there are quasi-inverse equivalences
\begin{displaymath}
\xymatrix{
\SFrm^\op \ar[rr]<0.75ex>^-{\pt} \ar@{<-}[rr]<-0.75ex>_-{\mcO} && \Sob
}
\end{displaymath}
where for a frame $F$ 
\begin{displaymath}
\pt(F) = \Frm(F, \Sierp)
\end{displaymath}
with open subsets, indexed by $l\in F$, given by $U_l = \{p\colon F\to \Sierp \mid p(l) = 1\}$, and for a sober space $X$ we define $\mcO(X)$ to be the frame of open subsets of $X$, which is spatial (for any space $X$). This equivalence is known as Stone duality.

\begin{rem}
Sometimes $\pt(F)$ is also denoted by $\Spec(F)$ and called the spectrum of $F$ in analogy with algebraic geometry.
\end{rem}

The following lemma is an immediate consequence of this duality; the essential point is that the space with one point is a generator for the category of Sober spaces (in the sense of separating distinct morphisms, i.e. two distinct parallel morphisms must differ at some point).

\begin{lem}\label{lem:SAFT}
The category $\SFrm$ of spatial frames is cogenerated by $\Sierp$. Moreover, it is complete, locally small, and well-powered and so satisfies the hypotheses of the Special Adjoint Functor Theorem (as formulated in \cite{MacLane}*{Chapter~V.8}).
\end{lem}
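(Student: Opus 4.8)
We want to show $\SFrm$ is cogenerated by $\Sierp$ and satisfies the hypotheses of the Special Adjoint Functor Theorem: complete, locally small, well-powered, and with a cogenerator. The cogenerator claim should follow directly from Stone duality together with the stated fact that the one-point space is a generator for $\Sob$: since $\pt$ and $\mcO$ are quasi-inverse (contravariant) equivalences, a cogenerating object in $\SFrm$ corresponds to a generating object in $\Sob$, and $\pt(\Sierp) = \Frm(\Sierp, \Sierp)$ is easily computed to be the one-point space (a frame map out of $\Sierp$ is determined by nothing — there is a unique one). So $\Sierp = \mcO(\pt)$ with $\pt$ the one-point space, and dualizing the generating property of $\pt$ in $\Sob$ gives that $\Sierp$ cogenerates $\SFrm$. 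First I would make this translation precise, being careful that "generator" here means separator/coseparator in the sense of distinguishing parallel morphisms, and that the contravariant equivalence converts separators to coseparators.

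For the remaining three properties, the cleanest route is again to transport along the equivalence, or to argue directly in $\SFrm$. \emph{Local smallness} is immediate since morphisms in $\SFrm$ are certain poset maps between sets, hence form a set. \emph{Well-poweredness}: a subobject of a spatial frame $F$ in $\SFrm$ is (represented by) a monomorphism into $F$; since the forgetful functor to $\Set$ is faithful, monomorphisms are in particular injective on underlying sets, so the collection of subobjects injects into the set of subsets of $F$ up to iso, giving a set. \emph{Completeness} is the one point requiring a little care: I would argue that $\SFrm$ has all small products and equalizers. Under Stone duality these correspond to coproducts and coequalizers in $\Sob$; alternatively, one can construct limits in $\SFrm$ directly by taking the limit in $\CLat$ (which is complete, being a category of algebras for the relevant operations) or in $\Frm$, and then checking the result is again spatial — but spatiality is not obviously preserved by arbitrary limits in $\Frm$, so the safer argument is via the dual: $\Sob$ is cocomplete (it is a reflective — indeed the soberification-localization — subcategory of $\Top$, which is cocomplete, so colimits in $\Sob$ are computed by soberifying colimits in $\Top$), hence $\SFrm \simeq \Sob^\op$ is complete.

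\textbf{Main obstacle.} The genuinely nontrivial point is \emph{completeness} of $\SFrm$, and more precisely the assertion that $\Sob$ is cocomplete — one must know that the inclusion $\Sob \hookrightarrow \Top$ has a left adjoint (soberification) so that colimits exist, and that this is exactly the content needed. Everything else (local smallness, well-poweredness, the cogenerator, the fact that the one-point space separates maps) is formal or a short verification. I would therefore spend most of the write-up on pinning down cocompleteness of $\Sob$ via soberification and then invoke the duality, citing \cite{stonespaces} for the reflectivity of sober spaces in topological spaces; the rest reduces to one-line remarks.
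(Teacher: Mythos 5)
Your approach is essentially the same as the paper's: the paper's only justification preceding the lemma is the remark that it ``is an immediate consequence of this duality; the essential point is that the space with one point is a generator for the category of Sober spaces,'' which is exactly the cogeneration argument you give, with the remaining hypotheses transported along $\SFrm\cong\Sob^\op$ or checked directly. Two small remarks.

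First, the justification you offer for well-poweredness is not correct as stated: faithfulness of the forgetful functor $\SFrm\to\Set$ only yields that \emph{injective maps are monomorphisms}, not the converse. (For instance, in the category of divisible abelian groups the quotient map $\QQ\to\QQ/\ZZ$ is a non-injective monomorphism even though the forgetful functor to $\Set$ is faithful.) The conclusion that monos in $\SFrm$ are injective is nevertheless true; a clean repair is to observe that the forgetful functor $\SFrm\to\Set$ is representable\textemdash{}indeed by the three-element chain $\{0<x<1\}\cong\mcO(\Sierp)$, the free frame on one generator, which is spatial\textemdash{}so it preserves limits and hence preserves monomorphisms. Alternatively, one can first establish that $\SFrm$ is closed under limits in $\Frm$ (this is Lemma~\ref{lem:limclosed} in the paper), so that monos in $\SFrm$ agree with monos in $\Frm$, which are injective because $\Frm\to\Set$ creates limits.

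Second, your caution that spatiality might not survive arbitrary limits in $\Frm$ turns out to be unwarranted: the paper proves, immediately after this lemma, that $\SFrm$ is closed under products and equalizers in $\Frm$ by a short direct argument\textemdash{}a point of a component or ambient frame separating two elements gives, by precomposition, a point of the limit separating the corresponding elements. Your route via cocompleteness of $\Sob$ (reflective in $\Top$ via soberification, hence cocomplete) is perfectly valid and gives the same conclusion, just with slightly more machinery than is strictly needed.
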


Before moving on we briefly return to coherent frames as introduced in Definition~\ref{cohfrm}. A coherent frame is automatically spatial, as shown for instance in \cite{stonespaces}*{Theorem~II.3.4}. Thus, under Stone duality, the coherent frames correspond to a distinguished class of sober spaces. 

\begin{defn}\label{defn:spectral}
The essential image of the coherent frames under Stone duality consists of the \emph{spectral spaces} (sometimes also called coherent spaces for coherence with the frame terminology). These are the quasi-compact sober spaces such that the quasi-compact open subsets are closed under finite intersections and form a basis for the topology. 
\end{defn}

As shown by Hochster \cite{Hochster} the spectral spaces are precisely the spectra of commutative rings. Their relevance to us is that they appear naturally in the context of tt-geometry. There is a natural notion of duality for spectral spaces which will appear in the comparison of our work to the tt-framework.

\begin{defn}\label{defn:Hochsterdual}
Let $X$ be a spectral space. The \emph{Hochster dual} of $X$, denoted by $X^\vee$, has the same points as $X$ and the topology generated by taking the closed subsets of $X$ with quasi-compact open complement as a basis of open subsets for $X^\vee$. 
\end{defn}

\begin{rem}
Another point of view is given as follows: let $K(X)$ denote the lattice of quasi-compact open subsets of $X$, i.e.\ the compact objects of the coherent frame $\mcO(X)$. The dual $X^\vee$ corresponds to the space of points of $\Ind(K(X)^\op)$, the coherent frame given by the ind-completion (i.e.\ lattice of ideals) of the opposite lattice of $K(X)$.
\end{rem}


\subsection{Limits}\label{ssec:limits}

We now briefly discuss limits in the various categories we have introduced above. By \cite{sketches}*{Lemma~C.1.1.3} the forgetful functors to $\Set$ create limits for $\Frm$ and $\CjSLat$ (one defines the lattice operations pointwise on products) and the forgetful functor $\Frm \to \CjSLat$ has a left adjoint. One can easily check that the forgetful functor to $\Set$ also creates limits for $\CLat$.

\begin{lem}\label{lem:limclosed}
The full subcategory $\SFrm$ of $\Frm$ is closed under limits.
\end{lem}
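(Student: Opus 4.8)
The plan is to show that if $\{F_i\}_{i \in I}$ is a diagram in $\SFrm$ with limit $F$ computed in $\Frm$ (equivalently, as observed above, in $\Set$ with pointwise operations), then $F$ is again spatial. Since the inclusion $\SFrm \hookrightarrow \Frm$ is full and faithful, it suffices to verify that the frame $F = \varprojlim_i F_i$, whose underlying set is $\varprojlim_i F_i$ in $\Set$ and whose lattice operations are computed componentwise, has enough points in the sense of Definition~\ref{sfrm}. So suppose $a, b \in F$ with $a \nleq b$. Writing $a = (a_i)_{i\in I}$ and $b = (b_i)_{i\in I}$ with $a_i, b_i \in F_i$, the inequality $a \nleq b$ means precisely that there is some index $i_0 \in I$ with $a_{i_0} \nleq b_{i_0}$ in $F_{i_0}$.

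First I would use spatiality of $F_{i_0}$: there is a point $q \colon F_{i_0} \to \Sierp$ in $\Frm$ with $q(a_{i_0}) = 1$ and $q(b_{i_0}) = 0$. Next I would pull this back along the canonical projection $\pi_{i_0} \colon F \to F_{i_0}$, which is a morphism in $\Frm$ by construction of the limit. Setting $p := q \circ \pi_{i_0} \colon F \to \Sierp$, this is a composite of frame maps and hence a frame map, i.e.\ a point of $F$. Finally, $p(a) = q(\pi_{i_0}(a)) = q(a_{i_0}) = 1$ and similarly $p(b) = q(b_{i_0}) = 0$, which is exactly what is required. This shows $F$ is spatial, so $F$ lies in $\SFrm$, and the inclusion preserves and reflects the limit cone; hence $\SFrm$ is closed under limits in $\Frm$.

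There is really no serious obstacle here — the argument is a routine "projections detect strict inequalities, and points transport along frame maps" observation. The only point worth being careful about is the identification of limits in $\Frm$ with pointwise limits, but this is precisely what was recorded in the discussion preceding the statement (via \cite{sketches}*{Lemma~C.1.1.3}), so one may invoke it directly. One should also note that $\SFrm$ is a \emph{full} subcategory, so closure under limits in $\Frm$ immediately gives that $\SFrm$ itself has, and the inclusion preserves, all limits; combined with Lemma~\ref{lem:SAFT} this is what makes $\SFrm$ amenable to the Special Adjoint Functor Theorem later on.
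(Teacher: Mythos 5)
Your proof is correct and rests on the same key idea as the paper's: separate the two elements in some $F_{i_0}$ by a point and pull back along the leg of the limit cone. The only presentational difference is that the paper splits the verification into products and equalizers separately (which suffice by the usual criterion for completeness), whereas you handle an arbitrary limit uniformly by invoking the concrete description of the limit in $\Frm$ as a set of compatible families with pointwise order, so that $a\nleq b$ in the limit is detected at some coordinate. Your unified treatment is slightly more economical, but the content is identical; the essential observation either way is that the limit legs are frame maps and the pointwise order on the limit means non-inequalities are visible at a single index.
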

\begin{proof}
It is enough to check that $\SFrm$ is closed under products and equalizers.

Suppose that $L_1$ and $L_2$ are spatial frames and we are given an equalizer diagram, taken in $\Frm$,
\begin{displaymath}
\xymatrix{
L\ar[r]^-e & L_1 \ar[r]<0.5ex>^-f \ar[r]<-0.5ex>_-g & L_2
}
\end{displaymath}
so that
\begin{displaymath}
L = \{l\in L_1 \mid f(l) = g(l)\}
\end{displaymath}
by the discussion above. If $l \nleq l'$ in $L$, then this is still true in $L_1$, and so there exists a point $p\colon L_1\to \Sierp$ separating these elements. Thus one can separate these elements of $L$ via the point $L \stackrel{e}{\to} L_1 \stackrel{p}{\to} \Sierp$.

Suppose $\{L_i \mid i\in I\}$ is a set of spatial frames with product (taken in $\Frm$) $L$ and $l\nleq l'$ in $L$. Then, there exists some $i\in I$ such that in the $i$th component $l_i \nleq l'_i$ and so, since $L_i$ is spatial, a point $p\colon L_i \to \Sierp$ separating $l_i$ and $l'_i$. Then precomposing with the canonical projection $L\to L_i$ gives a point of $L$ separating $l$ and $l'$.
\end{proof}


\subsection{The adjunctions}

Our approach to studying lattices of thick subcategories will be centred around the existence of two adjoint functors; the point of this section is to exhibit them. We can immediately deduce the existence of the first from general nonsense.

\begin{prop}\label{prop:adjoint1}
The forgetful functor $i\colon \SFrm \to \CjSLat$ has a left adjoint $\lefta$.
\end{prop}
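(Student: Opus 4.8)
The plan is to invoke the Special Adjoint Functor Theorem. By Lemma~\ref{lem:SAFT}, $\SFrm$ is complete, locally small, well-powered, and cogenerated by $\Sierp$; by Lemma~\ref{lem:limclosed} the inclusion $\SFrm \hookrightarrow \Frm$ creates limits, and by the discussion in Section~\ref{ssec:limits} the forgetful functor $\Frm \to \CjSLat$ creates limits as well, so the composite $i\colon \SFrm \to \CjSLat$ is continuous. Since the domain $\SFrm$ satisfies the hypotheses of the SAFT, any continuous functor out of it admits a left adjoint. Hence $i$ has a left adjoint, which we denote $\lefta$.

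The only point requiring genuine care is the verification that $i$ is continuous, i.e.\ preserves all (small) limits. First one checks that $\SFrm$ has all small limits: it has products and equalizers by Lemma~\ref{lem:limclosed} (the argument there being that a point separating two elements in a factor, resp.\ in the ambient frame of an equalizer, pulls back to a separating point of the limit), so it is complete. Then one observes that the inclusion into $\Frm$ preserves these limits (this is built into the proof of Lemma~\ref{lem:limclosed}, where the limits were computed in $\Frm$ and shown to land in $\SFrm$), and that the forgetful functor $\Frm \to \CjSLat$ preserves limits since both forgetful functors to $\Set$ create limits and the lattice operations on a product of frames are computed componentwise. Composing, $i$ preserves all small limits.

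I expect the main (minor) obstacle to be bookkeeping rather than mathematics: one must be slightly careful that ``limit in $\SFrm$'' really agrees with ``limit in $\CjSLat$'' on underlying sets, since a priori $\SFrm$, $\Frm$, and $\CjSLat$ have different morphisms (the maps in $\CjSLat$ need not preserve finite meets or the top element). The resolution is that limits in all three categories are created by the forgetful functors to $\Set$, so the underlying set of a limit is the same in each; the issue is only which structure one remembers, and continuity of $i$ is precisely the statement that remembering less structure is compatible with forming limits. With that settled, the SAFT applies verbatim and produces the desired left adjoint $\lefta$.
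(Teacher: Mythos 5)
Your proposal is correct and follows essentially the same route as the paper: Lemma~\ref{lem:limclosed} together with the fact that $\Frm \to \CjSLat$ preserves limits gives continuity of $i$, and then Lemma~\ref{lem:SAFT} lets the Special Adjoint Functor Theorem produce $\lefta$. The extra bookkeeping you include about limits being computed on underlying sets is fine but not needed beyond what the paper already records.
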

\begin{proof}
We have seen in Lemma~\ref{lem:limclosed} that the inclusion $\SFrm \to \Frm$ preserves limits. By \cite{sketches}*{Lemma~C.1.1.3} the canonical functor $\Frm \to \CjSLat$ preserves limits, and thus the composite $i\colon \SFrm \to \CjSLat$ preserves limits. The category $\CjSLat$ is certainly locally small and so by Lemma~\ref{lem:SAFT} the Special Adjoint Functor Theorem applies to yield the left adjoint $\lefta$. 
\end{proof}

It is possible to give a concrete construction of $\lefta$. We sketch the construction here, in order that the examples we give later are not completely out of the blue, but do not provide full justifications.

\begin{con}\label{cons:left}
Let $L$ be a complete join semilattice. We define the collection of \emph{semipoints} of $L$ to be
\begin{displaymath}
\spt(L) = \CjSLat(L, \Sierp),
\end{displaymath}
i.e.\ the collection of join (but not necessarily meet) preserving maps to the frame $\Sierp$. As in the discussion of Stone duality (and as to come in Construction~\ref{cons:lefter}) we define for $l\in L$ a subset
\begin{displaymath}
U_l = \{p\in \spt(L) \mid p(l)=1\}
\end{displaymath}
and note that, for any collection of elements $\{l_\lambda \mid \lambda \in \Lambda\}$ of $L$, we have $U_{\vee_{\lambda} l_\lambda} = \cup_{\lambda} U_{l_\lambda}$. These subsets will, in general, not be closed under finite intersections (for example $\spt(L)$ need not be of this form), but we take the topology they generate (so the open subsets are unions of finite intersections of $U_l$'s) and endow $\spt(L)$ with the structure of a topological space.

The free spatial frame on $L$, $\lefta(L)$, is given by the frame of open subsets $\mcO\spt(L)$ of $\spt(L)$. (To be completely precise this should be $\mcO^\op\spt^\op$, to be precise about the action on maps, but we omit these op's throughout for readability). The unit of the adjunction $L \to \lefta(L)$ is given by the obvious assignment $l \mapsto U_l$, and for a frame $F$ the counit is given by
\begin{displaymath}
\varepsilon\colon \lefta(F) \to F \quad \quad \varepsilon(\bigcup_i (\bigcap_{j=1}^{n_i} U_{f_j})) = \bigvee_i (\bigwedge_{j=1}^{n_i} f_j).
\end{displaymath}
\end{con}

We also wish to consider the fully faithful functor $j\colon \SFrm \to \CLat$ which, as we shall see, is also a right adjoint. This follows, as above, by noting that limits in $\CLat$ are created by the forgetful functor. However, we wish to give a direct construction of the left adjoint which we denote by $\lefter \colon \CLat \to \SFrm$. 

\begin{con}\label{cons:lefter}
Given a complete lattice $L\in \CLat$ we can construct from it a space exactly as if it were a frame. Indeed, the space of points of $L$ is $\CLat(L, \Sierp)$ (note that this is taken in $\CLat$ and so maps are required to preserve all joins and finite meets), with open subsets the $U_l = \{p\colon L\to \Sierp \mid p(l)=1\}$. One checks easily (using that joins and meets are preserved as indicated) that these are the open subsets of a topology. As for a frame, we often denote this space by either $\pt(L)$ or $\Spec(L)$.

We define $\lefter L$ to be $\mcO\CLat(L,\Sierp)$, the spatial frame of open subsets of this space. Both $\mcO$ and $\CLat(-,\Sierp)$ are (contravariantly) functorial and so we get a functor $\lefter\colon \CLat \to \SFrm$.

Now let us construct the unit and counit of adjunction. The unit for $L\in \CLat$ as above is given by
\begin{displaymath}
L \stackrel{\eta_L}{\to} j\lefter L \quad \text{via} \quad l \mapsto U_l.
\end{displaymath}
On the other hand for a spatial frame $F$ we know that $\lefter jF$ is naturally isomorphic to the functor $F$, via the duality with $\Sob$. We thus take for the counit $\varepsilon\colon \lefter jF \to F$ the counit of the equivalence $\SFrm \cong \Sob^\op$.
\end{con}

\begin{prop}\label{prop:adjoint2}
The above construction determines an adjunction $\lefter \dashv j$ between $\SFrm$ and $\CLat$.
\end{prop}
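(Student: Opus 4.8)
The plan is to avoid verifying the triangle identities head-on and instead show that the unit $\eta$ of Construction~\ref{cons:lefter} has the universal property of a reflection: for every spatial frame $F$ and every $\CLat$-morphism $\psi\colon L\to jF$ there is a unique $\SFrm$-morphism $\bar\psi\colon\lefter L\to F$ with $j(\bar\psi)\circ\eta_L=\psi$. That \emph{some} left adjoint to $j$ exists is, as in Proposition~\ref{prop:adjoint1}, immediate from Lemma~\ref{lem:limclosed}, the fact that limits in $\CLat$ are computed on underlying sets, and the Special Adjoint Functor Theorem via Lemma~\ref{lem:SAFT}; the real content is to identify that adjoint with the explicit $\lefter$ and to recognise the stated $\eta$ and $\varepsilon$ as its unit and counit.

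I would begin by recording the elementary structure of $\Spec L=\CLat(L,\Sierp)$. Since a morphism $L\to\Sierp$ in $\CLat$ preserves finite meets (including the empty one) and arbitrary joins (including the empty one), one has $U_0=\varnothing$, $U_1=\Spec L$, $U_{l\wedge m}=U_l\cap U_m$, and $U_{\bigvee_\lambda l_\lambda}=\bigcup_\lambda U_{l_\lambda}$. Hence the $U_l$ are \emph{precisely} the open sets of $\Spec L$, the assignment $l\mapsto U_l$ is a \emph{surjective} $\CLat$-morphism $\eta_L\colon L\to\mcO\Spec L=j\lefter L$, and functoriality of $\eta$ and of $\lefter=\mcO\circ\Spec$ drops out of these identities.

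Now fix $\psi\colon L\to jF$ with $F$ spatial. The rule $q\mapsto q\circ\psi$ defines a map $f_\psi\colon\pt F\to\Spec L$ — it lands in $\CLat(L,\Sierp)$ because a point of a frame, and $\psi$, each preserve finite meets and arbitrary joins — and it is continuous since $f_\psi^{-1}(U_l)=U_{\psi(l)}$. Let $\beta_F\colon F\to\mcO\pt F$, $a\mapsto U_a$, be the canonical map, which is an isomorphism exactly because $F$ is spatial, and set $\bar\psi:=\beta_F^{-1}\circ f_\psi^{-1}\colon\mcO\Spec L=\lefter L\to F$; this is a frame morphism. Evaluating on $U_l$ gives $\bar\psi(U_l)=\beta_F^{-1}(f_\psi^{-1}(U_l))=\beta_F^{-1}(U_{\psi(l)})=\psi(l)$, so $j(\bar\psi)\circ\eta_L=\psi$; and since $\eta_L$ is surjective, any frame map agreeing with $\bar\psi$ on every $U_l$ agrees with it everywhere, giving uniqueness. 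A brief check then shows the resulting bijection $\SFrm(\lefter L,F)\cong\CLat(L,jF)$ is natural in $L$ and $F$, sends $\id_{\lefter L}$ to $\eta_L$, and forces the counit at $F$ to be the unique frame map $\varepsilon_F\colon\lefter jF\to F$ with $j(\varepsilon_F)\circ\eta_{jF}=\id_{jF}$, namely $\beta_F^{-1}$ — which is exactly the Stone-duality counit of Construction~\ref{cons:lefter}.

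The only places the argument can go wrong, and where I would be most careful, are the two uses of the hypotheses: that the $U_l$ really exhaust the topology on $\Spec L$ (so $\mcO\Spec L$ is a quotient of $L$ and $\eta_L$ is surjective, which is what makes uniqueness automatic), and that $\beta_F$ is invertible (so $F$ loses nothing in passing to $\mcO\pt F$). Both reduce to the fact that morphisms in $\CLat$, hence points, preserve \emph{exactly} finite meets and arbitrary joins, together with the definition of spatiality; granting these, the rest is a routine transport of the $\mcO\dashv\pt$ adjunction of Stone duality along the forgetful functor $\SFrm\to\CLat$.
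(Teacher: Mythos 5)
Your proof is correct, but it takes a genuinely different route from the paper's. The paper takes the unit and counit of Construction~\ref{cons:lefter} as given and verifies the two triangle identities directly: the identity on $\lefter$ holds because $\lefter$ lands in $\SFrm$, where Stone duality makes the composite $\varepsilon_{\lefter}\circ\lefter(\eta)$ the identity, and the identity on $j$ is reduced, via full faithfulness of $j$, to the composite $\id_{\SFrm}\to\lefter j\to\id_{\SFrm}$, which is again the identity by Stone duality. You instead establish the universal property of $\eta_L$ as a reflection along $j$: since morphisms in $\CLat$ preserve exactly finite meets and arbitrary joins, the sets $U_l$ are closed under finite intersections and arbitrary unions and hence exhaust the topology on $\Spec L$, so $\eta_L$ is a surjective $\CLat$-map; you then build the transpose $\bar\psi=\beta_F^{-1}\circ f_\psi^{-1}$ from precomposition of points with $\psi$ and the spatiality isomorphism $\beta_F$, with uniqueness automatic from surjectivity of $\eta_L$. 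Your route buys an explicit construction of the adjoint transpose, makes visible that $\lefter L$ is a quotient of $L$ (which the paper never states), and recovers the counit $\beta_F^{-1}$ rather than importing it wholesale from the equivalence $\SFrm\cong\Sob^{\op}$; the paper's route buys brevity by leaning on that equivalence and on $j$ being fully faithful. The one step you leave as a ``brief check''\textemdash{}naturality of the bijection $\SFrm(\lefter L,F)\cong\CLat(L,jF)$, equivalently naturality of $\eta$ with respect to the functor structure $\lefter=\mcO\circ\Spec$ fixed in Construction~\ref{cons:lefter}\textemdash{}is indeed routine (it amounts to the identity $\Spec(g)^{-1}(U_l)=U_{g(l)}$ for $g\colon L\to M$ in $\CLat$), but it is the point where your argument must reconnect with the functoriality already specified in the paper, so it should be spelled out rather than omitted.
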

\begin{proof}
We check the triangle identities, which is straightforward in this case. We start with the composite
\begin{displaymath}
\xymatrix{
\lefter \ar[r]^-{\lefter(\eta)} & \lefter j \lefter \ar[r]^-{\varepsilon_\lefter} & \lefter.
}
\end{displaymath}
Since $\lefter$ lands in $\SFrm$ this composite is the identity by Stone duality (using our choice of the counit). The other identity concerns the composite
\begin{displaymath}
\xymatrix{
j \ar[r]^-{\eta_j} & j \lefter j \ar[r]^-{j(\varepsilon)} & j,
}
\end{displaymath}
which can be identified with $j$ applied to the composite $\id_{\SFrm} \to \lefter j \to \id_{\SFrm}$ by using that $j$ is fully faithful. This latter composite is again the identity by Stone duality.
\end{proof}

Before continuing with our discussion let us make the following observation which will be useful in the sequel; we use the concepts recalled in Definition~\ref{defn:meetprime} and the remark following it. 

\begin{lem}\label{lem:sober}
Let $L$ be a complete lattice. The space $X = \CLat(L,\Sierp)$ is sober.
\end{lem}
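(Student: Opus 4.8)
The plan is to show that the open subsets $U_l$ of $X = \CLat(L,\Sierp)$ are exactly the open subsets of a sober space by exhibiting, for each irreducible closed subset, a unique generic point. Recall that points of $L$ correspond bijectively to meet-prime elements via the correspondence in the remark following Definition~\ref{defn:meetprime}: a point $p$ goes to $\mfp_p = \bigvee\{a \mid p(a) = 0\}$, and $\mfp_p$ is the largest element with $p(\mfp_p) = 0$. Under this bijection, for a point $q$ we have $q \in U_l$ if and only if $q(l) = 1$ if and only if $l \nleq \mfp_q$. So I would transport the whole problem to the poset $\mcP$ of meet-prime elements of $L$, where the closed sets are the $V(l) = \{\mfp \in \mcP \mid l \leq \mfp\}$, with $V(\bigvee_i l_i) = \bigcap_i V(l_i)$ and $V(l) \cup V(m) = V(l \wedge m)$ (the latter using meet-primeness).

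Next I would identify the closure of a point: for $\mfp \in \mcP$, the smallest closed set containing it is $V(\mfp)$ itself, since $\mfp \in V(l) \iff l \leq \mfp \iff V(\mfp) \subseteq V(l)$. Hence closed sets of the form $V(\mfp)$ with $\mfp$ meet-prime are precisely the closures of points, and the generic point of $V(\mfp)$ is $\mfp$, unique because $\mfp$ is the largest element of $\mcP$ lying in $V(\mfp)$ (indeed if $V(\mfq) = V(\mfp)$ then $\mfq \leq \mfp$ and $\mfp \leq \mfq$). So the task reduces to: every irreducible closed subset is of the form $V(\mfp)$ for some meet-prime $\mfp$.

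The main step, then, is the following: given an irreducible closed set $C = V(a)$ (every closed set is $V(a)$ for $a = \bigwedge_{\mfq \in C}\mfq$, and $C = V(a)$ precisely when $C$ is closed, which I'd note explicitly — actually I should set $a = \bigwedge_{\mfq\in C}\mfq$ and check $C = V(a)$), I claim $a$ is itself meet-prime. Since $C$ is irreducible and closed it is nonempty, so $a \neq 1$. Suppose $l \wedge m \leq a$. Then $V(a) \subseteq V(l \wedge m) = V(l) \cup V(m)$, so $C = (C \cap V(l)) \cup (C \cap V(m))$ is a union of two closed subsets; by irreducibility $C \subseteq V(l)$ or $C \subseteq V(m)$, i.e.\ $l \leq \mfq$ for all $\mfq \in C$ or $m \leq \mfq$ for all $\mfq \in C$, hence $l \leq a$ or $m \leq a$. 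Thus $a$ is meet-prime and $C = V(a)$ has $a$ as its unique generic point. I anticipate the only genuinely fiddly point is confirming that $C = V(\bigwedge_{\mfq \in C}\mfq)$ for closed $C$ — one inclusion is immediate, and the reverse uses that $C = V(b)$ for some $b$ (being closed) together with $b \leq \bigwedge_{\mfq\in C}\mfq$, so I would spell that out and otherwise keep the argument brief.
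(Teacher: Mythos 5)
Your proof is correct and follows essentially the same route as the paper: identify points with meet-prime elements, note every closed set has the form $\{\mfp \in \mcP \mid l \leq \mfp\}$, take $a = \bigwedge_{\mfq \in C}\mfq$ for an irreducible closed $C$, and use irreducibility to show $a$ is meet-prime and is the unique generic point. The extra checks you flag (that $C = V(a)$, that $a \neq 1$, and uniqueness via antisymmetry) are exactly the details the paper leaves implicit, so nothing further is needed.
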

\begin{proof}
The closed subsets of $X$ are precisely those of the form
\begin{displaymath}
Z_l = \{p\in \CLat(L,\Sierp) \mid p(l)=0\} \text{ for } l\in L.
\end{displaymath}
Under the bijection between points of $L$ and the set $\mcP$ of meet-prime elements we have $Z_l = \{\mfp \in \mcP \mid l\leq \mfp\}$. We note that, under this identification, it is clear that for $\mfq\in \mcP$ the subset $Z_\mfq$ is irreducible with generic point $\mfq$.

Suppose that $Z_l$ is irreducible, and consider $\mfq = \bigwedge_{\mfp\in Z_l}\mfp$. We claim that $\mfq$ is meet-prime. To see this, suppose that we are given $m,n\in L$ with $m\wedge n\leq \mfq$ i.e.\ $m\wedge n \leq \mfp$ for every $\mfp\in Z_l$, i.e. $Z_l\subseteq Z_{m\wedge n} = Z_m \cup Z_n$. By irreducibility of $Z_l$ we have, without loss of generality, that $Z_l\subseteq Z_m$, i.e.\ $m\leq \mfp$ for all $\mfp\in Z_l$ and so $m\leq \mfq$. Hence $\mfq$ is meet-prime as claimed and we have $Z_l = Z_{\mfq}$, so $Z_l$ has unique generic point $\mfq$. 
\end{proof}

So we have two ways of constructing a spatial frame from a complete lattice: on one hand $\lefter$ leaves lattices which are already spatial frames unchanged but we need to be careful to take maps that also preserve finite meets, while $\lefta$ is more brutal but we can work with arbitrary join preserving maps.

However, $\lefter$ can fail to be kind to nondistributive lattices.

\begin{ex}\label{ex:diamond}
Consider the diamond lattice $D$ with Hasse diagram
\begin{displaymath}
	\begin{tikzpicture}[scale=0.7]
    
		\node (v0) at (0,0) {};
    \node (va) at (-2,2) {};
    \node (vb) at (0,2) {};
		\node (vc) at (2,2) {};
		\node (v1) at (0,4) {};
    
    \draw[fill] (v0)  circle (2pt) node [below] {0};
		\draw[fill] (va)  circle (2pt) node [left] {$l$};
		\draw[fill] (vb)  circle (2pt) node [right] {$m$};
		\draw[fill] (vc)  circle (2pt) node [right] {$n$};
		\draw[fill] (v1)  circle (2pt) node [above] {$1$};

		\path[-] (v0) edge  node [above] {} (va);
		\path[-] (v0) edge  node [above] {} (vb);
		\path[-] (v0) edge  node [above] {} (vc);
		\path[-] (va) edge  node [above] {} (v1);
		\path[-] (vb) edge  node [above] {} (v1);
		\path[-] (vc) edge  node [above] {} (v1);

  \end{tikzpicture}
\end{displaymath}
This is a complete nondistributive lattice. We have $\CLat(D,\Sierp)=\varnothing$ since preserving meets and joins implies that any point $p$ would, for each pair of elements $i,j\in \{l,m,n\}$, have to send one of them to $1$ and the other to $0$. This cannot be done compatibly for all three of $l,m$, and $n$. Thus $\lefter D$ is $\mcO(\varnothing) = \{\varnothing\} = \mathbf{1}$ which is the terminal frame. 
\end{ex}

\begin{lem}\label{lem:serialkiller}
Let $L\in \CLat$ be a complete lattice. If there is an embedding of complete lattices $D\to L$ then $\lefter L \cong \mathbf{1}$.
\end{lem}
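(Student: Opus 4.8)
The plan is to deduce the statement from functoriality of the construction $\lefter$ together with the computation $\CLat(D,\Sierp)=\varnothing$ already carried out in Example~\ref{ex:diamond}. Recall that by definition $\lefter L = \mcO\CLat(L,\Sierp)$, so it suffices to show that $L$ has no points, i.e.\ that $\CLat(L,\Sierp)=\varnothing$: then $\lefter L = \mcO(\varnothing) = \{\varnothing\} = \mathbf{1}$.

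First I would record that an embedding of complete lattices $e\colon D \to L$ is, in particular, a morphism of $\CLat$, so that it preserves arbitrary joins and finite meets (and hence $0$ and $1$). Since $\CLat(-,\Sierp)$ is contravariantly functorial, $e$ induces a map of sets $\CLat(L,\Sierp)\to\CLat(D,\Sierp)$, concretely $p\mapsto p\circ e$; equivalently, if $p\colon L\to\Sierp$ were a point of $L$, then $p\circ e\colon D\to\Sierp$ would be a composite of $\CLat$-morphisms, hence a point of $D$. Invoking Example~\ref{ex:diamond}, where $\CLat(D,\Sierp)=\varnothing$ (a point of $D$ would have to $2$-colour $\{l,m,n\}$ so that each pair receives one $0$ and one $1$, which is impossible), we conclude that $\CLat(L,\Sierp)$ must be empty as well, for otherwise it would admit a map to the empty set. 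Hence $\lefter L \cong \mathbf{1}$.

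There is no real obstacle here: the entire content of the lemma is the diamond computation of Example~\ref{ex:diamond}, and what remains is the trivial observation that a point of $L$ restricts along $D\to L$ to a point of $D$. The one point that deserves a word of care is the reading of ``embedding of complete lattices'' as (at least) an injective $\CLat$-morphism, hence top- and bottom-preserving, so that $p\circ e$ genuinely lands in $\CLat(D,\Sierp)$; note that injectivity of $e$ is not even used in the argument, only that it is a morphism of $\CLat$.
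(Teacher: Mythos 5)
Your proof is correct and takes essentially the same approach as the paper: observe that a point $p\colon L\to\Sierp$ would restrict along the embedding to a point $p\circ e$ of $D$, which is impossible by Example~\ref{ex:diamond}, so $\pt(L)=\varnothing$ and $\lefter L\cong\mathbf{1}$. You merely spell out the functoriality of $\CLat(-,\Sierp)$ more explicitly, and your aside that injectivity of $e$ is not actually used is a fair (if minor) observation.
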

\begin{proof}
If $p\colon L\to \Sierp$ were a point of $L$ then it would restrict to a point of $D$. This is impossible, and so we must have $\pt(L) = \varnothing$ and the conclusion follows. 
\end{proof}

\begin{rem}
The lemma requires that the embedding takes place in $\CLat$ and so it must preserve the top and bottom elements. As the following example shows a lattice can be nondistributive, due to containing the diamond as a non-bounded sublattice (i.e.\ the diamond is a subposet), and be non-trivial under $\lefter$.
\end{rem}

\begin{ex}
Consider the complete nondistributive lattice $L$ with Hasse diagram
\begin{displaymath}
	\begin{tikzpicture}[scale=0.7]
    
		\node (v0) at (0,-2) {};
		\node (vx) at (0,0) {};
    \node (va) at (-2,2) {};
    \node (vb) at (0,2) {};
		\node (vc) at (2,2) {};
		\node (v1) at (0,4) {};
    
		\draw[fill] (vx)  circle (2pt) node [right] {$x$};
    \draw[fill] (v0)  circle (2pt) node [below] {0};
		\draw[fill] (va)  circle (2pt) node [left] {$l$};
		\draw[fill] (vb)  circle (2pt) node [right] {$m$};
		\draw[fill] (vc)  circle (2pt) node [right] {$n$};
		\draw[fill] (v1)  circle (2pt) node [above] {$1$};

    \path[-] (v0) edge  node [above] {} (vx);
		\path[-] (vx) edge  node [above] {} (va);
		\path[-] (vx) edge  node [above] {} (vb);
		\path[-] (vx) edge  node [above] {} (vc);
		\path[-] (va) edge  node [above] {} (v1);
		\path[-] (vb) edge  node [above] {} (v1);
		\path[-] (vc) edge  node [above] {} (v1);

  \end{tikzpicture}
\end{displaymath}
This lattice has a point $p\colon L\to \Sierp$ defined by $p(0) = 0$ and sending the remaining elements to $1$. In fact this is the unique point and so $\Spec L = \ast$, the terminal space, and $\lefter L = \Sierp$.
\end{ex}

The other minimal nondistributive lattice, the pentagon lattice, is not killed by $\lefter$.

\begin{ex}\label{ex:pentagon}
Let $P$ be the pentagon lattice with Hasse diagram
\begin{displaymath}
	\begin{tikzpicture}[scale=0.7]
    
		\node (v0) at (0,0) {};
    \node (va) at (-1,1) {};
    \node (vb) at (1,2) {};
		\node (vc) at (-1,3) {};
		\node (v1) at (0,4) {};
    
    \draw[fill] (v0)  circle (2pt) node [below] {0};
		\draw[fill] (va)  circle (2pt) node [left] {$m$};
		\draw[fill] (vb)  circle (2pt) node [right] {$l$};
		\draw[fill] (vc)  circle (2pt) node [left] {$n$};
		\draw[fill] (v1)  circle (2pt) node [above] {$1$};

		\path[-] (v0) edge  node [above] {} (va);
		\path[-] (v0) edge  node [above] {} (vb);
		\path[-] (va) edge  node [above] {} (vc);
		\path[-] (vb) edge  node [above] {} (v1);
		\path[-] (vc) edge  node [above] {} (v1);

  \end{tikzpicture}
\end{displaymath}
The lattice $P$ has precisely two points $p_1$ and $p_2$ given by
\begin{displaymath}
\begin{array}{cc}
m,n & \mapsto 0 \\
l & \mapsto 1
\end{array}
\quad \text{ and } \quad
\begin{array}{cc}
m,n &\mapsto 1 \\
l & \mapsto 0
\end{array}
\end{displaymath}
respectively, with the discrete topology. Thus $\lefter P$ is the lattice 
\begin{displaymath}
	\begin{tikzpicture}[scale=0.7]
    
		\node (v0) at (0,0) {};
    \node (va) at (-1,1) {};
    \node (vb) at (1,1) {};
		\node (v1) at (0,2) {};
    
    \draw[fill] (v0)  circle (2pt) node [below] {0};
		\draw[fill] (va)  circle (2pt) node [left] {$x$};
		\draw[fill] (vb)  circle (2pt) node [right] {$y$};
		\draw[fill] (v1)  circle (2pt) node [above] {$1$};

		\path[-] (v0) edge  node [above] {} (va);
		\path[-] (v0) edge  node [above] {} (vb);
		\path[-] (va) edge  node [above] {} (v1);
		\path[-] (vb) edge  node [above] {} (v1);

  \end{tikzpicture}
\end{displaymath}
where $x=\{p_2\}$ and $y = \{p_1\}$, and we see $m$ and $n$ have been collapsed to $x$.
\end{ex}



\section{Preliminaries on triangulated categories}\label{sec:t}
\setcounter{subsection}{1}

In this section, which serves mostly to fix notation and ideas, we introduce the lattices that will be our main focus.

We denote by $\Tcat$ the category of essentially small triangulated categories and exact functors between them. Throughout all triangulated categories will be essentially small unless it is explicitly mentioned otherwise.

Let $\sfK$ be a triangulated category.

\begin{defn}\label{def:thick}
A full subcategory $\sfM\subset \sfK$ is \emph{thick} if it is closed under:
\begin{itemize}
\item isomorphisms;
\item all suspensions;
\item cones;
\item direct summands,
\end{itemize} 
i.e.\ it is a triangulated subcategory closed under summands.

Given a collection of objects $C \subseteq \sfK$ we denote by $\thick(C)$ the smallest thick subcategory containing $C$ (which exists by Lemma~\ref{lem:thicklattice}). We will instead use $\thick_\sfK(C)$ if we wish to emphasise the ambient triangulated category. 
\end{defn}

We denote by $\Thick(\sfK)$ the collection of thick subcategories of $\sfK$. This is naturally a poset when ordered by inclusion.

\begin{lem}\label{lem:thicklattice}
The poset $\Thick(\sfK)$ is a complete lattice with meets given by intersections. The join of a family $\{\sfM_\lambda\mid \lambda \in \Lambda\}$ of thick subcategories is given by
\begin{displaymath}
\bigvee_{\lambda \in \Lambda} \sfM_\lambda = \thick(\bigcup_{\lambda \in \Lambda} \sfM_\lambda).
\end{displaymath}
\end{lem}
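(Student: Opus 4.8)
The plan is to verify directly that the three classes of morphisms preserving thickness — isomorphisms, suspensions, cones, and summands — are closed under arbitrary intersections, and then to identify the join via the universal property of $\thick(-)$. First I would establish the easy half: given any family $\{\sfM_\lambda \mid \lambda \in \Lambda\}$ of thick subcategories of $\sfK$, the intersection $\bigcap_{\lambda} \sfM_\lambda$ is again thick. This is a routine check clause by clause: if an object lies in every $\sfM_\lambda$ then so does any isomorphic object and any suspension; if a distinguished triangle has two vertices in every $\sfM_\lambda$ then the third does too since each $\sfM_\lambda$ is closed under cones; and direct summands of an object of the intersection are summands in each $\sfM_\lambda$, hence in the intersection. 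The empty intersection is all of $\sfK$, which is thick. So $\Thick(\sfK)$ has all meets, computed as intersections. By the standard fact recalled in Definition~\ref{clat}, a poset with arbitrary meets and a top element is automatically a complete lattice, the arbitrary join of a family being the meet of all upper bounds; this simultaneously justifies the existence of $\thick(C)$ promised in Definition~\ref{def:thick}, namely $\thick(C) = \bigcap \{\sfM \in \Thick(\sfK) \mid C \subseteq \sfM\}$.

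Next I would identify the join concretely. Write $\sfN = \thick(\bigcup_{\lambda} \sfM_\lambda)$. Since $\sfN$ is a thick subcategory containing each $\sfM_\lambda$, it is an upper bound for the family, so $\bigvee_\lambda \sfM_\lambda \leq \sfN$. Conversely, any thick subcategory $\sfM'$ containing all the $\sfM_\lambda$ contains their union $\bigcup_\lambda \sfM_\lambda$ as a collection of objects, hence contains the smallest thick subcategory $\sfN$ generated by that collection; thus $\sfN \leq \sfM'$. Taking $\sfM' = \bigvee_\lambda \sfM_\lambda$ (which is an upper bound by definition of join) gives $\sfN \leq \bigvee_\lambda \sfM_\lambda$, and combining the two inequalities yields $\bigvee_\lambda \sfM_\lambda = \thick(\bigcup_\lambda \sfM_\lambda)$ as claimed.

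I do not expect any genuine obstacle here: the only point requiring a moment's care is the closure of an intersection under cones, where one must remember that a distinguished triangle is determined by a morphism and that the third vertex — being the cone on that morphism, formed once and for all in $\sfK$ — lies in $\sfM_\lambda$ precisely because $\sfM_\lambda$ is closed under cones, uniformly in $\lambda$. Everything else is bookkeeping. One could also phrase the whole argument more slickly: $\Thick(\sfK)$ is a closure system (the thick subcategories are exactly the fixed points of the monotone inflationary idempotent operator $\thick(-)$ on the power-poset of full subcategories closed under isomorphism), and closure systems are automatically complete lattices with meets given by intersection and joins given by closure of the union — which is exactly the content of the lemma.
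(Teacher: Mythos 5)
Your proof is correct and follows the same line as the paper's (which is quite terse): intersections of thick subcategories are thick, hence $\Thick(\sfK)$ has arbitrary meets and is a complete lattice, and the join is identified as the smallest thick subcategory containing the union. You simply fill in the routine verifications the paper leaves implicit, and the closing remark that $\Thick(\sfK)$ is a closure system is a standard repackaging of exactly the same argument.
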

\begin{proof}
It is evident from the closure conditions defining a thick subcategory that any intersection of thick subcategories is again thick. Since a thick subcategory is determined by the objects it contains it follows immediately that this must give the meet.

That the join is as specified can be checked directly or deduced from the formula for the join in terms of the meet.
\end{proof}

\begin{rem}
The lattice $\Thick(\sfK)$ is in many examples not even distributive, let alone a frame; we explore this in depth in Section~\ref{sec:shithappens}.
\end{rem}

\begin{lem}\label{lem:commute}
Suppose $F\colon \sfK \to \sfL$ is an exact functor. For any collection of objects $C\subseteq \sfK$ there is an equality
\begin{displaymath}
\thick_{\sfL}(F\thick_{\sfK}(C)) = \thick_{\sfL}(FC).
\end{displaymath} 
\end{lem}
\begin{proof}
It is clear that $\thick_{\sfL}(FC) \subseteq \thick_{\sfL}(F\thick_{\sfK}(C))$. For the other direction, consider the subcategory
\begin{displaymath}
\sfM = \{k\in \sfK \mid Fk \in \thick_{\sfL}(FC)\}
\end{displaymath}
of $\sfK$. By (the analogue for thick subcategories of) \cite{StevensonActions}*{Lemma~3.8} the subcategory $\sfM$ is thick. It contains $C$ by construction and hence contains $\thick_{\sfK}(C)$. Thus $F\thick_{\sfK}(C) \subseteq \thick_{\sfL}(FC)$ from which the remaining containment follows immediately.
\end{proof}

\begin{lem}\label{lem:thickfunctor}
Suppose $F\colon \sfK \to \sfL$ is an exact functor. Then the assignment
\begin{displaymath}
T(F)\colon \Thick(\sfK) \to \Thick(\sfL) \quad \quad \sfM \mapsto \thick(F\sfM) =: T(F)\sfM
\end{displaymath}
is a map in $\CjSLat$, i.e.\ it preserves the order and arbitrary joins.
\end{lem}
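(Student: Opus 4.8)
The plan is to verify directly that $T(F)$ is well-defined, order-preserving, and join-preserving, since each of these is a short consequence of Lemma~\ref{lem:commute} and Lemma~\ref{lem:thicklattice}. That $T(F)\sfM = \thick(F\sfM)$ is a genuine element of $\Thick(\sfL)$ is immediate from Lemma~\ref{lem:thicklattice}, which guarantees that $\thick$ of any collection of objects exists. Order-preservation is equally transparent: if $\sfM \subseteq \sfN$ in $\Thick(\sfK)$ then $F\sfM \subseteq F\sfN$ as collections of objects, hence $\thick(F\sfM) \subseteq \thick(F\sfN)$ by minimality of the thick closure.

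The only substantive point is preservation of arbitrary joins, and this is where Lemma~\ref{lem:commute} does the work. Given a family $\{\sfM_\lambda \mid \lambda\in\Lambda\}$ in $\Thick(\sfK)$, the join is $\bigvee_\lambda \sfM_\lambda = \thick(\bigcup_\lambda \sfM_\lambda)$ by Lemma~\ref{lem:thicklattice}. Applying $T(F)$ and then Lemma~\ref{lem:commute} with the collection of objects $C = \bigcup_\lambda \sfM_\lambda$ gives
\begin{displaymath}
T(F)\Bigl(\bigvee_\lambda \sfM_\lambda\Bigr) = \thick_\sfL\bigl(F\thick_\sfK(\textstyle\bigcup_\lambda \sfM_\lambda)\bigr) = \thick_\sfL\bigl(F(\textstyle\bigcup_\lambda \sfM_\lambda)\bigr) = \thick_\sfL\bigl(\textstyle\bigcup_\lambda F\sfM_\lambda\bigr).
\end{displaymath}
On the other hand, the join in $\Thick(\sfL)$ of the family $\{T(F)\sfM_\lambda\}_\lambda = \{\thick_\sfL(F\sfM_\lambda)\}_\lambda$ is, again by Lemma~\ref{lem:thicklattice}, equal to $\thick_\sfL\bigl(\bigcup_\lambda \thick_\sfL(F\sfM_\lambda)\bigr)$, and one more application of Lemma~\ref{lem:commute} (or simply the idempotency $\thick_\sfL(\thick_\sfL(-)) = \thick_\sfL(-)$ together with monotonicity) identifies this with $\thick_\sfL\bigl(\bigcup_\lambda F\sfM_\lambda\bigr)$. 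Comparing the two expressions yields $T(F)(\bigvee_\lambda \sfM_\lambda) = \bigvee_\lambda T(F)\sfM_\lambda$, as required; the bottom element (empty join) is preserved as the special case $\Lambda = \varnothing$, where both sides equal $\thick_\sfL(\varnothing) = 0$.

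I do not anticipate any real obstacle here: the statement is essentially a packaging of Lemma~\ref{lem:commute}, and the only thing to be slightly careful about is keeping straight the distinction between the thick closure of a union of subcategories and the union of their thick closures, which the lemma is precisely designed to bridge. No hypotheses beyond exactness of $F$ are needed, and in particular $T(F)$ need not preserve the top element, consistent with the target category being $\CjSLat$ rather than $\CLat$.
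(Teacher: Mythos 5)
Your proof is correct and follows essentially the same route as the paper: order-preservation from $F\sfM\subseteq F\sfN$, and join-preservation by combining the join formula of Lemma~\ref{lem:thicklattice} with Lemma~\ref{lem:commute} to identify both sides with $\thick_\sfL\bigl(\bigcup_\lambda F\sfM_\lambda\bigr)$. Your alternative justification of the final step via idempotency and monotonicity of $\thick_\sfL$ is a harmless variant of the paper's second invocation of Lemma~\ref{lem:commute}.
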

\begin{proof}
If $\sfM \subseteq \sfN \subseteq \sfK$ are thick subcategories then $F\sfM \subseteq F\sfN$ and so clearly $T(F)$ is order preserving. Given a family $\{\sfM_\lambda\mid \lambda \in \Lambda\}$ of thick subcategories of $\sfK$ we have
\begin{align*}
T(F) \bigvee_{\lambda \in \Lambda} \sfM_\lambda &= \thick_\sfL\left(F \thick_{\sfK}\left(\bigcup_{\lambda \in \Lambda} \sfM_\lambda\right)\right) \\
&= \thick_\sfL\left(F \bigcup_{\lambda \in \Lambda} \sfM_\lambda\right) \\
&= \thick_\sfL\left(\bigcup_{\lambda \in \Lambda} F\sfM_\lambda\right) \\
&= \thick_\sfL\left(\bigcup_{\lambda \in \Lambda} \thick_{\sfL}\left( F\sfM_\lambda\right)\right) \\
&= \bigvee_{\lambda \in \Lambda} T(F)\sfM_\lambda,
\end{align*}
where we have used Lemma~\ref{lem:commute} for the second and fourth equalities.
\end{proof}

\begin{defn}
We define a functor $T\colon \Tcat \to \CjSLat$, by setting $T(\sfK) = \Thick(\sfK)$ and using Lemma~\ref{lem:thickfunctor} to define the action on maps.
\end{defn}

One cannot, in general, improve this to a factorization through $\CLat$. If one wants the assignment $\sfK \mapsto \Thick(\sfK)$ to be functorial with respect to all exact functors then the price one pays is that one has to ignore meets. This is fairly typical rather than pathological behaviour, which one should expect in examples where there is a `combinatorial' component to the classification problem for thick subcategories.

\begin{ex}\label{ex:letsnotmeet}
Consider the linearly oriented Dynkin quiver $A_2$
\begin{displaymath}
	\begin{tikzpicture}[shorten >= 2pt, shorten <= 2pt]
    
    \node (v1) at (1.5,0) {};
    \node (v2) at (3,0) {};
    
    \draw[fill] (v1)  circle (2pt) node [above] {\footnotesize 1};
    \draw[fill] (v2)  circle (2pt) node [above] {\footnotesize 2};

		\path[->] (v1) edge  node [above] {} (v2);

  \end{tikzpicture}
\end{displaymath}
and the quiver endomorphism $f$ determined by $f(1) = 2 = f(2)$. Let $F$ denote the corresponding derived base change functor $\sfD^\mathrm{b}(kA_2) \to \sfD^\mathrm{b}(kA_2)$. Let $P_1$ and $P_2$ denote the indecomposable projective modules. We have, on one hand,
\begin{align*}
T(F)\thick(P_1) \wedge T(F) \thick(P_2) &= \thick(FP_1) \wedge \thick (FP_2) \\
&= \thick(P_2) \wedge \thick(P_2) \\
&= \thick(P_2).
\end{align*}
But, on the other hand,
\begin{align*}
T(F)(\thick(P_1) \wedge \thick(P_2)) = T(F)(0) = 0.
\end{align*}
\end{ex}

\begin{rem}
It is also natural (perhaps seemingly more so given one's training) to consider, given $F\colon \sfK \to \sfL$, the poset map $F^{-1}\colon \Thick(\sfL) \to \Thick(\sfK)$ which preserves arbitrary meets (but not necessarily joins). Ordering thick subcategories by reverse containment one can thus define a functor $\Tcat^\op \to \CjSLat$.
However, the choice is in some sense cosmetic: the map $F^{-1}$ is right adjoint to $T(F)$ and so the functor $T$ and the functor $\Tcat^\op \to \CjSLat$, defined above, just differ by the standard duality on $\CjSLat$ (see \cite{sketches}*{Remark~1.1.7}). We regard the choice we make here, to prioritise $T$ when not inconvenient, as the correct one in the sense that it agrees with the choice implicitly made in algebraic and tt-geometry.

This is a little counterintuitive, so let us sketch the reasoning in the case of geometry. Let $f\colon R\to S$ be a map of commutative rings. Let $\Zar(R)$ denote the Zariski frame of radical ideals. There is a corresponding frame map $f\colon \Zar(R) \to \Zar(S)$ which, analogously to the above, sends an ideal $I$ to $\sqrt{(fI)}$. This gives rise to a continuous map $F\colon \pt(\Zar(S)) \to \pt(\Zar(R))$ by precomposition.

A point of $\Zar(S)$ is equivalent to a principal prime ideal in the lattice-theoretic sense, which is given by $I_\mfp = \{J\in \Zar(S) \mid J\subseteq \mfp\}$ for a usual prime ideal $\mfp$ of $S$. The corresponding point  of $\Zar(R)$, which is the composite $\Zar(R) \to \Zar(S) \to \Sierp$, is then given precisely by the meet-prime 
\begin{displaymath}
\bigvee \{K\in \Zar(R)\mid fK \leq \mfp\} = \bigvee \{K\in \Zar(R) \mid K\subseteq f^{-1}\mfp\} = f^{-1}\mfp.
\end{displaymath}
Thus, as claimed, the counterintuitive map on Zariski frames give rise to our beloved preimages on primes.
\end{rem}

Now let us give a very brief recapitulation of the examples motivating the constructions of this article.

\begin{defn}
By a \emph{tt-category} we mean a tensor triangulated category, i.e.\ a triangulated category equipped with a symmetric monoidal structure which is exact in each variable. 
\end{defn}

There are by now many surveys on tt-categories and their corresponding tt-geometry, for instance \cite{BaICM} and \cite{Stevensontour} and the references therein, and one can see the work of Kock and Pitsch \cite{KockPitsch} for a more lattice-theoretic point of view; given this we omit the majority of the details here.

They key facts are that the lattice of radical thick $\otimes$-ideals forms a coherent frame (by \cite{BKS}) and that under the functor $\pt$ of Stone duality this gives the Hochster dual of the Balmer spectrum.

The following should be well known (e.g.\ the statement can be found in \cite{HPS}), but we provide a proof for completeness.

\begin{lem}\label{lem:rigid}
Let $\sfT$ be a closed tt-category. Then the full subcategory of rigid objects $\sfT^\mathrm{rig}$ is thick. In particular, we have $\thick(\unit)\subseteq \sfT^\mathrm{rig}$.
\end{lem}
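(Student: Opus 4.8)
The statement to prove is that in a closed tt-category $\sfT$, the full subcategory $\sfT^{\mathrm{rig}}$ of rigid (= strongly dualizable) objects is thick, and consequently $\thick(\unit) \subseteq \sfT^{\mathrm{rig}}$. The strategy is the standard one: verify directly that $\sfT^{\mathrm{rig}}$ is closed under each of the four operations in Definition~\ref{def:thick}, then conclude the ``in particular'' clause by noting that $\unit$ is rigid (its own dual) and $\sfT^{\mathrm{rig}}$ is a thick subcategory containing $\unit$, hence contains the smallest such, namely $\thick(\unit)$.

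\textbf{Key steps.} First I would fix terminology: an object $x \in \sfT$ is \emph{rigid} if the canonical map $x^\vee \otimes y \to \underline{\Hom}(x,y)$ is an isomorphism for all $y$ (equivalently, $x$ admits a coevaluation $\unit \to x \otimes x^\vee$ exhibiting $x^\vee$ as a dual), where $x^\vee = \underline{\Hom}(x,\unit)$. \emph{Closure under isomorphisms} is immediate since being rigid is defined by a property invariant under isomorphism. \emph{Closure under suspensions}: if $x$ is rigid then $\Sigma x$ has dual $\Sigma^{-1}(x^\vee)$, since $\underline{\Hom}(-, \unit)$ takes $\Sigma$ to $\Sigma^{-1}$ and one checks the evaluation/coevaluation maps still compose correctly; similarly for $\Sigma^{-1}$. \emph{Closure under direct summands}: if $x \oplus x'$ is rigid, then $x$ is rigid with dual the corresponding summand of $(x\oplus x')^\vee$ — this is a purely formal fact about retracts of dualizable objects in a symmetric monoidal category, using that $\underline{\Hom}$ is additive and the idempotent splitting is compatible with the duality structure. \emph{Closure under cones} is the substantive point: given a triangle $x \to y \to z \to \Sigma x$ with $x, y$ rigid, I want $z$ rigid. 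The cleanest route is to use that $z$ is rigid iff the evaluation map $z^\vee \otimes z \to \unit$ and coevaluation $\unit \to z \otimes z^\vee$ exist with the triangle identities, OR — more practically — to use the criterion that $z$ is rigid iff the natural transformation $z^\vee \otimes (-) \to \underline{\Hom}(z,-)$ is an isomorphism. Apply the functors $\underline{\Hom}(-, w)$ and $z^\vee \otimes (-)$ (the latter meaning: form the dual triangle and tensor) to the triangle $x \to y \to z$; since $\underline{\Hom}(-,w)$ and $(-)^\vee \otimes w$ are exact (the internal hom and tensor are exact in each variable), one gets a morphism of triangles relating $\underline{\Hom}(x,w), \underline{\Hom}(y,w), \underline{\Hom}(z,w)$ with $x^\vee \otimes w, y^\vee \otimes w, z^\vee \otimes w$; two of three vertical maps are isomorphisms (namely those for $x$ and $y$), so the third is too by the triangulated five-lemma. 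Hence $z$ is rigid.

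\textbf{Expected main obstacle.} The one genuinely delicate step is closure under cones: one must be careful that $(-)^\vee$, although contravariant, does send the triangle $x \to y \to z \to \Sigma x$ to a triangle $z^\vee \to y^\vee \to x^\vee \to \Sigma z^\vee$ (this requires only that $\underline{\Hom}(-,\unit)$ is exact, which holds because $\sfT$ is closed), and that the comparison maps $x^\vee \otimes w \to \underline{\Hom}(x,w)$ assemble into an honest morphism of triangles — i.e., that the relevant squares commute. Commutativity is a naturality/coherence check on the unit and counit of the $\otimes$–$\underline{\Hom}$ adjunction; it is routine but is the place where care is needed. After that, the triangulated five-lemma (the version valid in any triangulated category: if two of three vertical maps in a map of triangles are isomorphisms so is the third) finishes it. For the final assertion, $\unit^\vee = \underline{\Hom}(\unit,\unit) \cong \unit$ is rigid, so $\sfT^{\mathrm{rig}}$ is a thick subcategory containing $\unit$, whence $\thick(\unit) \subseteq \sfT^{\mathrm{rig}}$.
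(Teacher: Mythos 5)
Your overall strategy coincides with the paper's: everything reduces to comparing, along a triangle, the natural map $\alpha_{x,w}\colon x^\vee\otimes w\to\hom(x,w)$ and running a five-lemma-type argument; the paper merely packages this by fixing $y$, showing $\sfR_y=\{x\mid \alpha_{x,y}\text{ is an isomorphism}\}$ is thick, and writing $\sfT^{\mathrm{rig}}=\bigcap_y\sfR_y$. Your handling of isomorphism-closure, suspensions, summands, and the ``in particular'' clause is fine.

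The gap is precisely at the step you yourself flag as delicate, and the justification you offer for it is not valid. You assert that $\hom(-,\unit)$ and $\hom(-,w)$ are exact ``because $\sfT$ is closed''. Under the definition in force here\textemdash{}a triangulated category with a symmetric monoidal structure exact in each variable, together with internal homs\textemdash{}contravariant exactness of the internal hom does \emph{not} follow: closedness gives exactness of $\hom(x,-)$ (a right adjoint of the exact functor $-\otimes x$), but it does not guarantee that $\hom(-,w)$ carries distinguished triangles to distinguished triangles, nor that the ``dual triangle'' $z^\vee\to y^\vee\to x^\vee\to\Sigma z^\vee$ is distinguished. What closedness does give for free is weaker: for every $t$ one has $\Hom(t,\hom(x,w))\cong\Hom(t\otimes x,w)$, and since $t\otimes-$ is exact and $\Hom(-,w)$ is cohomological, $\hom(-,w)$ sends a triangle to a \emph{pre-triangle}. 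Consequently you cannot invoke the five lemma for morphisms of distinguished triangles as stated; you need the 2-of-3 property for isomorphisms between maps of pre-triangles, which is exactly the twist in the paper's proof, via \cite{NeeCat}*{Proposition~1.1.20}. The same care is required for your other row $z^\vee\otimes w\to y^\vee\otimes w\to x^\vee\otimes w$, whose status again rests on exactness properties of $(-)^\vee=\hom(-,\unit)$ that closedness alone does not supply. If you instead adopt the stronger definition of a closed tt-category in which the internal hom is required to be exact in each variable (as in \cite{HPS}), your argument is correct as written, but that is an additional hypothesis not granted by the statement; as it stands, the step ``which holds because $\sfT$ is closed'' is the missing piece, and the repair is the pre-triangle argument the paper uses.
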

\begin{proof}
By definition $x\in \sfT$ is rigid if for all $y\in \sfT$ the natural map
\begin{displaymath}
\xymatrix{
x^\vee \otimes y \ar[r]^-{\alpha_{x,y}} & \hom(x,y)
}
\end{displaymath}
is an isomorphism. Consider then, for fixed $y\in \sfT$, the subcategory
\begin{displaymath}
\sfR_y = \{x\in \sfT \mid \alpha_{x,y} \text{ is an isomorphism.}\}.
\end{displaymath}
Since $\alpha_{x,y}$ is natural in $x$ and compatible with sums and suspension, the source functor is exact, and the target functor sends triangles to pre-triangles, we see that $\sfR_y$ is thick\textemdash{}this is a twist on the standard argument, exploiting the fact that, by \cite{NeeCat}*{Proposition~1.1.20}, one already has the 2-of-3 property for isomorphisms for maps of pre-triangles. It is then enough to note that
\begin{displaymath}
\sfT^\mathrm{rig} = \bigcap_{y\in \sfT} \sfR_y
\end{displaymath}
since an intersection of thick subcategories is thick.
\end{proof}

We close by discussing a class of examples that will be of use in the sequel. Consider a family of triangulated categories $\{\sfK_\lambda\mid \lambda\in \Lambda\}$ indexed by a set $\Lambda$. Then the coproduct and product, as additive categories, exist and inherit canonical pointwise triangulated structures. 

\begin{lem}\label{lem:product}
Let $\{\sfK_\lambda\mid \lambda\in \Lambda\}$ be a family of triangulated categories with coproduct $\sfK$. Then the natural map
\begin{displaymath}
\Thick(\sfK) \stackrel{\sim}{\to} \prod_{\lambda\in \Lambda} \Thick(\sfK_\lambda),
\end{displaymath}
is an isomorphism of complete lattices, i.e.\ it is a poset isomorphism preserving finite meets and arbitrary joins.
\end{lem}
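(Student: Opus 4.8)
Let $\iota_\mu\colon \sfK_\mu \to \sfK = \coprod_{\lambda\in\Lambda}\sfK_\lambda$ denote the canonical inclusions. An object of $\sfK$ is a tuple $(x_\lambda)_{\lambda\in\Lambda}$ with $x_\lambda \in \sfK_\lambda$, almost all zero, and triangles, suspensions and summands are all computed componentwise. The plan is to write down the map, produce an explicit inverse, and check that both are morphisms of complete lattices.

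\textbf{The map and its inverse.} The map $\Phi\colon \Thick(\sfK)\to \prod_{\lambda}\Thick(\sfK_\lambda)$ sends a thick subcategory $\sfM$ to the tuple $(\iota_\lambda^{-1}\sfM)_{\lambda}$, where $\iota_\lambda^{-1}\sfM = \{x\in \sfK_\lambda \mid \iota_\lambda x \in \sfM\}$; each component is thick since preimages of thick subcategories under exact functors are thick (as used in the proof of Lemma~\ref{lem:commute}). In the other direction, given a tuple $(\sfN_\lambda)_\lambda$ of thick subcategories, define $\Psi((\sfN_\lambda)_\lambda)$ to be the full subcategory of $\sfK$ on those tuples $(x_\lambda)_\lambda$ with $x_\lambda \in \sfN_\lambda$ for every $\lambda$; this is thick because the closure conditions are checked componentwise. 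The first thing to verify is $\Psi\circ\Phi = \id$: the inclusion $\sfM \subseteq \Psi\Phi(\sfM)$ is immediate, and conversely if $(x_\lambda)_\lambda$ has each $x_\lambda\in\iota_\lambda^{-1}\sfM$, then $(x_\lambda)_\lambda$ is the (finite) coproduct $\bigoplus_\lambda \iota_\lambda x_\lambda$ of objects of $\sfM$, hence lies in $\sfM$ since thick subcategories are closed under finite coproducts (a coproduct being a summand of the cone on the zero map, or simply a finite iterated extension). For $\Phi\circ\Psi = \id$ one computes $\iota_\mu^{-1}\Psi((\sfN_\lambda)_\lambda) = \{x\in\sfK_\mu \mid \iota_\mu x \in \Psi((\sfN_\lambda)_\lambda)\}$; since $\iota_\mu x$ is the tuple with $x$ in spot $\mu$ and $0$ elsewhere, and $0\in\sfN_\lambda$ always, this set is exactly $\sfN_\mu$.

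\textbf{Lattice structure.} It remains to see that $\Phi$ (equivalently $\Psi$) preserves order, arbitrary meets, and arbitrary joins; being an order-isomorphism it automatically preserves all meets and joins that exist, so really one only needs $\Phi$ and $\Psi$ to be order-preserving, which is clear from the definitions (if $\sfM\subseteq\sfM'$ then $\iota_\lambda^{-1}\sfM\subseteq\iota_\lambda^{-1}\sfM'$, and similarly for $\Psi$). Since meets in both $\Thick(\sfK)$ and the product are intersections (Lemma~\ref{lem:thicklattice}, and meets in a product of lattices are componentwise) and $\Phi$ is a bijection respecting the order in both directions, it is an isomorphism of posets and hence of complete lattices; in particular it preserves finite meets and arbitrary joins, which is the assertion.

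\textbf{Main obstacle.} None of the steps is deep; the only point that requires a moment's care is the containment $\Psi\Phi(\sfM)\subseteq \sfM$, i.e.\ the observation that a tuple lying componentwise in $\sfM$ is genuinely an object of $\sfM$. This uses that only finitely many components are nonzero together with closure of thick subcategories under finite coproducts, and it is exactly the place where one uses the \emph{co}product rather than the product — for an infinite product the analogous statement fails and $\Phi$ need not be surjective.
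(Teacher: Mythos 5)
Your proof is correct and follows essentially the same route as the paper's: the componentwise preimage map together with the direct-sum inverse, with the containment $\Psi\Phi(\sfM)\subseteq\sfM$ resting on finitely many nonzero components plus closure under finite sums, and the reverse containment on summand closure. The only cosmetic difference is that you deduce preservation of meets and joins from $\Phi$ being a poset isomorphism, whereas the paper notes meet-preservation via preimages and checks joins componentwise; both are fine.
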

\begin{proof}
The canonical inclusions $\sfK_\lambda \to \sfK$ give, via taking preimages, complete meet-semilattice maps $\Thick(\sfK) \to \Thick(\sfK_\lambda)$ and so there is a canonical map of meet-semilattices as indicated in the statement, which we call $\phi$.

Let us prove that $\phi$ is bijective. If $X = \{\sfM_\lambda\mid\lambda\in \Lambda\}$ is an element of $\prod_{\lambda\in \Lambda} \Thick(\sfK_\lambda)$, then there is a fully faithful exact functor
\begin{displaymath}
\sfM = \bigoplus_\lambda \sfM_\lambda \to \sfK,
\end{displaymath}
sending an object on the left to the direct sum (which is finite) of its components, and we set $\psi(X) = \sfM$ which defines a map
\[
\psi \colon \prod_{\lambda\in \Lambda} \Thick(\sfK_\lambda) \to \Thick(\sfK)
\]
It is clear that $\phi\psi(X) = X$. On the other hand, if $\sfN$ is a thick subcategory of $\sfK$ then, by summand closure, we certainly have $\psi\phi(\sfN) \subseteq \sfN$. This is an equality since every object of $\sfK$ is a finite sum of objects of the $\sfK_\lambda$. 


Finally, we need to check that $\phi$ also preserves all joins. There are several ways to deduce this; the most direct is to notice it follows, more or less immediately, from the componentwise nature of the triangulated structure. 
\end{proof}

\begin{rem}
Understanding the product $\prod_{\lambda} \sfK_\lambda$ is another story. This contains the direct sum as a thick subcategory, and hence a copy of $\prod_{\lambda} \Thick(\sfK_\lambda)$ as an (unbounded) sublattice, but the lattice $\Thick(\prod_{\lambda} \sfK_\lambda)$ is strictly larger (assuming infinitely many $\sfK_\lambda$ are non-zero) and seems significantly more complicated.
\end{rem}



\section{Most possibilities occur}\label{sec:shithappens}

In this section we present examples illustrating that various properties may or may not hold for the lattice of thick subcategories of a triangulated category $\sfK$. We start with the best possible, from the point of view of pointless topology, and work our way down the hierarchy. 


\subsection{Frames}

It is well known, by the initiated, that the lattice of thick radical $\otimes$-ideals in a tt-category forms a coherent frame (see e.g.\ \cite{BKS}). It follows that, in special cases, the entire lattice of thick subcategories forms a coherent frame.

\begin{lem}\label{lem:unit}
If $\sfT$ is a closed tt-category such that $\sfT = \thick(\unit)$ then every thick subcategory is a $\otimes$-ideal and $\sfT$ is rigid. Hence $\Thick(\sfT)$ is a coherent frame.
\end{lem}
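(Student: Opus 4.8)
The plan is to prove the three assertions in sequence, each feeding into the next. First I would show every thick subcategory of $\sfT$ is a $\otimes$-ideal. Fix a thick subcategory $\sfM \subseteq \sfT$ and an object $t \in \sfT$. I want $t \otimes \sfM \subseteq \sfM$. The standard trick: consider the subcategory $\sfN = \{x \in \sfT \mid x \otimes \sfM \subseteq \sfM\}$. Since $- \otimes m$ is exact for each $m \in \sfM$ and $\sfM$ is thick, $\sfN$ is thick (this is the usual ideal-generation argument, essentially \cite{StevensonActions}*{Lemma~3.8} again, or a direct check). Clearly $\unit \in \sfN$ because $\unit \otimes m \cong m \in \sfM$. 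Hence $\thick(\unit) \subseteq \sfN$, but $\thick(\unit) = \sfT$ by hypothesis, so $\sfN = \sfT$ and every object of $\sfT$ tensors $\sfM$ into $\sfM$; that is, $\sfM$ is a $\otimes$-ideal.

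Next, rigidity: by Lemma~\ref{lem:rigid} the rigid objects $\sfT^{\mathrm{rig}}$ form a thick subcategory containing $\thick(\unit)$, and since $\thick(\unit) = \sfT$ we get $\sfT^{\mathrm{rig}} = \sfT$, i.e.\ $\sfT$ is rigid. (This uses that $\sfT$ is closed, which is part of the hypothesis.)

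Finally, the frame assertion: I would note that because every thick subcategory is a $\otimes$-ideal, the lattice $\Thick(\sfT)$ coincides with the lattice of thick $\otimes$-ideals; and because $\sfT$ is rigid, every thick $\otimes$-ideal is radical (in a rigid tt-category all thick $\otimes$-ideals are radical — this is the standard fact that $\sqrt{0} = 0$ object-wise, which I would cite, e.g.\ from \cite{BaSpec} or the discussion around \cite{BKS}). Therefore $\Thick(\sfT)$ is exactly the lattice of radical thick $\otimes$-ideals, which is a coherent frame by \cite{BKS} as recalled just before the lemma. I expect the main (only real) obstacle to be making sure all the hypotheses of the cited coherence result are genuinely met — in particular that "closed" plus $\sfT = \thick(\unit)$ really does land us in the rigid, hence radical, setting where \cite{BKS} applies — rather than any delicate argument; the lattice-theoretic content is light once the identification $\Thick(\sfT) = \{\text{radical thick } \otimes\text{-ideals}\}$ is in place.
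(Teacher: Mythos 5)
Your proposal is correct and follows essentially the same route as the paper: the first assertion via the standard argument that $\{x \in \sfT \mid x \otimes \sfM \subseteq \sfM\}$ is thick and contains $\unit$, the second via Lemma~\ref{lem:rigid}, and the third by identifying $\Thick(\sfT)$ with the lattice of radical thick $\otimes$-ideals (rigidity giving radicality) and invoking the known coherence of that frame. No gaps.
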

\begin{proof}
The first part is standard, and the second is an application of Lemma~\ref{lem:rigid}. The final statement then follows by the identification of $\Thick(\sfT)$ with the lattice of radical $\otimes$-ideals, which is isomorphic to the frame of Thomason subsets of $\Spc \sfT$ by \cite{BaSpec}*{Theorem~4.10}. The latter is always a coherent frame. 
\end{proof}

\begin{ex}
For instance, we could take $R$ to be a commutative ring and $\sfK = \sfD^\mathrm{perf}(R)$, which is generated by the tensor unit $R$. Then $\Thick(\sfD^\mathrm{perf}(R))$ is a coherent frame, and in fact is dual to the Zariski frame of radical ideals of $R$ (see \cite{KockPitsch} for further discussion).
\end{ex}

Upon seeing this one might hope that, for a closed tt-category $\sfT$, the lattice $\Thick(\sfT)$ is a frame precisely when $\unit$ generates. As the following example shows this is not at all the case. We can use Lemma~\ref{lem:product} to produce non-trivial examples of tt-categories $\sfT$ such that $\Thick(\sfT)$ is a frame and $\Thick^\otimes(\sfT)\subsetneq \Thick(\sfT)$, so in particular they are not generated by $\unit$.

\begin{ex}
Let $R$ be a commutative noetherian ring and let $G$ be a finite abelian group (abelian is only necessary so that the monoidal structure we obtain is symmetric). We construct the tt-analogue of the group algebra: consider the `group tt-category' $\sfD^\mathrm{perf}(R)G$
\begin{displaymath}
\sfD^\mathrm{perf}(R)G = \bigoplus_{g\in G} \sfD^\mathrm{perf}(R)_g,
\end{displaymath}
where the $g$ is just an index to let us keep track of components. This is triangulated, with the componentwise triangulated structure, and we define the tensor product by
\begin{displaymath}
X_g \otimes Y_h = (X\otimes_R Y)_{gh}
\end{displaymath}
and additivity.

One could, equivalently, enhance the situation and view this as the homotopy category of the functor category $[G,\sfD^\mathrm{perf}(R)]$ with the Day convolution monoidal structure, or as the compact objects of the derived category of $G$-graded $R$-modules, where $R$ is equipped with the trivial grading, with the derived tensor product of graded modules. 

By Lemma~\ref{lem:product} we have $\Thick(\sfD^\mathrm{perf}(R)G) \cong \prod_{g\in G} \Thick(\sfD^\mathrm{perf}(R))$, i.e. a product of $\vert G\vert$ copies of the frame of Thomason subsets of $\Spec R$.

On the other hand $\Thick^\otimes(\sfD^\mathrm{perf}(R)G)$ is a single copy of the frame of Thomason subsets, as follows from either direct computation, \cite{DS13}*{Theorem~5.1} if viewing this as the derived category of $G$-graded modules, or \cite{aoki}*{Theorem~I} from the functor category point of view.
\end{ex}


\subsection{Distributivity}

It is natural to ask if one can find examples where $\Thick(\sfK)$ is only a frame, i.e.\ not coherent, or only distributive. It turns out that actually one gets a lot for free: we will show that distributivity of $\Thick(\sfK)$ implies that $\Thick(\sfK)$ is a frame. We start with a standard fact.

\begin{lem}\label{lem:finite}
Let $\{\sfM_\lambda\mid \lambda \in \Lambda\}$ be a family of thick subcategories and suppose that
\begin{displaymath}
k\in \bigvee_{\lambda \in \Lambda} \sfM_\lambda.
\end{displaymath}
Then there exist $\lambda_1,\ldots, \lambda_n$ such that $k\in \sfM_{\lambda_1} \vee \cdots \vee \sfM_{\lambda_n}$.
\end{lem}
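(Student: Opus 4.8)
The statement is the assertion that every thick subcategory $\sfM_\lambda$ is a \emph{compact} element of the lattice $\Thick(\sfK)$; more precisely, that any $k$ in the join $\bigvee_{\lambda\in\Lambda}\sfM_\lambda$ already lies in a finite subjoin. The plan is to exploit the explicit description of the join from Lemma~\ref{lem:thicklattice}, namely $\bigvee_{\lambda\in\Lambda}\sfM_\lambda = \thick(\bigcup_{\lambda\in\Lambda}\sfM_\lambda)$, together with the fact that $\thick(C)$ is built from $C$ in a "finitely local" way: every object of $\thick(C)$ involves only finitely many objects of $C$.

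First I would make precise this "finitely local" property. Recall the standard stratification of $\thick(C)$: set $\thick_0(C)$ to be the closure of $\{0\}\cup C$ under isomorphisms, suspensions (both directions), and finite direct sums, and inductively let $\thick_{n+1}(C)$ consist of all direct summands of objects $z$ fitting into a triangle $x\to z\to y\to \Sigma x$ with $x,y\in\thick_n(C)$ (again closed under isomorphism). Then $\thick(C)=\bigcup_{n\geq 0}\thick_n(C)$; this is the usual construction and one checks the union is thick because any triangle or summand relation among its members already occurs at some finite stage. The key observation is then: if $k\in\thick_n(C)$ then there is a \emph{finite} subset $C'\subseteq C$ with $k\in\thick_n(C')$. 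This follows by an immediate induction on $n$ — at stage $0$ an object is built from finitely many elements of $C$ by finitely many operations, and at stage $n+1$ one takes the (finite) union of the finite subsets furnished by the two predecessors $x$ and $y$ in the defining triangle.

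Now I would assemble the argument. Given $k\in\bigvee_{\lambda\in\Lambda}\sfM_\lambda = \thick(\bigcup_\lambda \sfM_\lambda)$, pick $n$ with $k\in\thick_n(\bigcup_\lambda \sfM_\lambda)$ and a finite subset $C'\subseteq \bigcup_\lambda \sfM_\lambda$ with $k\in\thick_n(C')\subseteq\thick(C')$. Since $C'$ is finite, each of its (finitely many) objects lies in some $\sfM_{\lambda_i}$, so there are indices $\lambda_1,\dots,\lambda_m$ with $C'\subseteq \sfM_{\lambda_1}\cup\cdots\cup\sfM_{\lambda_m}$. Therefore $k\in\thick(C')\subseteq\thick(\sfM_{\lambda_1}\cup\cdots\cup\sfM_{\lambda_m}) = \sfM_{\lambda_1}\vee\cdots\vee\sfM_{\lambda_m}$, as required.

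The only mildly delicate point — and the one I would spell out carefully rather than treat as routine — is the inductive claim that membership in $\thick_n(C)$ only uses finitely many objects of $C$; everything else is formal manipulation of the join formula. I expect no real obstacle: the result is genuinely a "softness of thick closure" statement and the filtration argument is the standard tool. (One could alternatively phrase this via transfinite/countable iteration of a single closure step, but the $\thick_n$ filtration keeps the finiteness bookkeeping most transparent.)
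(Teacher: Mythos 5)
Your proposal is correct and follows the same route as the paper: the paper also argues via the inductive construction of $\thick(\bigcup_\lambda \sfM_\lambda)$ by iterated cones, noting that each step involves only finitely many objects from the $\sfM_\lambda$ and that $k$ appears after finitely many steps. Your version simply makes the finiteness bookkeeping (the $\thick_n$ filtration and the finite subset $C'$) more explicit than the paper does.
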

\begin{proof}
By definition
\begin{displaymath}
k\in \bigvee_{\lambda \in \Lambda} \sfM_\lambda = \thick\left( \: \bigcup_{\lambda \in \Lambda} \sfM_\lambda\right) \!.
\end{displaymath}
We can construct the latter inductively, in the usual fashion, by starting with the $\sfM_\lambda$ and taking iterated cones. Each cone involves only a finite sum of objects from the $\sfM_\lambda$, and our fixed object $k$ occurs after finitely many steps, and so the statement follows.
\end{proof}

\begin{prop}\label{prop:dist}
Let $\sfK$ be an essentially small triangulated category such that $\Thick(\sfK)$ is distributive. Then $\Thick(\sfK)$ is a frame.
\end{prop}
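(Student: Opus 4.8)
The plan is to upgrade the hypothesised finite distributivity to full frame distributivity, exploiting the fact that joins of thick subcategories are finitary in the precise sense of Lemma~\ref{lem:finite}. Since $\Thick(\sfK)$ is already a complete lattice by Lemma~\ref{lem:thicklattice}, and since the empty meet (i.e.\ the top element) trivially distributes over arbitrary joins, it suffices to show that for any thick subcategory $\sfN$ and any family $\{\sfM_\lambda \mid \lambda \in \Lambda\}$ of thick subcategories we have
\[
\sfN \wedge \bigvee_{\lambda \in \Lambda} \sfM_\lambda = \bigvee_{\lambda \in \Lambda}(\sfN \wedge \sfM_\lambda);
\]
the general finite case then follows by induction on the number of meetands.

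First I would dispose of the easy inclusion: each $\sfN \wedge \sfM_\lambda$ is contained both in $\sfN$ and in $\bigvee_{\lambda} \sfM_\lambda$, hence so is their join, which gives $\bigvee_{\lambda}(\sfN \wedge \sfM_\lambda) \subseteq \sfN \wedge \bigvee_{\lambda} \sfM_\lambda$. This uses nothing beyond the lattice structure and does not need distributivity.

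For the reverse inclusion I would argue object-wise, which is legitimate because a thick subcategory is determined by the objects it contains. Take $k \in \sfN \wedge \bigvee_{\lambda} \sfM_\lambda$. Then $k \in \bigvee_{\lambda} \sfM_\lambda$, so by Lemma~\ref{lem:finite} there are indices $\lambda_1, \dots, \lambda_n$ with $k \in \sfM_{\lambda_1} \vee \cdots \vee \sfM_{\lambda_n}$; together with $k \in \sfN$ this yields $k \in \sfN \wedge (\sfM_{\lambda_1} \vee \cdots \vee \sfM_{\lambda_n})$. Now the hypothesis of finite distributivity rewrites the right-hand side as $(\sfN \wedge \sfM_{\lambda_1}) \vee \cdots \vee (\sfN \wedge \sfM_{\lambda_n})$, and each summand is contained in $\bigvee_{\lambda}(\sfN \wedge \sfM_\lambda)$. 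Hence $k$ lies in $\bigvee_{\lambda}(\sfN \wedge \sfM_\lambda)$, and since $k$ was arbitrary the desired containment follows.

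I do not expect any genuine obstacle here: essentially all of the content is already encapsulated in Lemma~\ref{lem:finite} (objects are compact, equivalently joins of thick subcategories are filtered colimits), and the remainder is the standard lattice-theoretic observation that an algebraic lattice satisfying finite distributivity is automatically a frame. The only mild subtlety is that Lemma~\ref{lem:finite} is phrased for a single object lying in a join, so one must first reduce the lattice inequality to an object-wise statement before invoking it — but this reduction is immediate.
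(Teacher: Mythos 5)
Your proof is correct and follows essentially the same route as the paper: both reduce to the binary meet against an arbitrary join, check the easy inclusion, and then use Lemma~\ref{lem:finite} to pass to a finite subjoin where the assumed (finite) distributivity applies. The extra remarks about the empty meet and the induction to $n$-fold meets are fine but are exactly the standard points the paper leaves implicit.
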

\begin{proof}
Suppose we are given a thick subcategory $\sfL$ and a family $\{\sfM_\lambda\mid \lambda \in \Lambda\}$ of thick subcategories. It is always true that
\begin{displaymath}
\sfL \cap \bigvee_{\lambda \in \Lambda} \sfM_\lambda \supseteq \bigvee_{\lambda \in \Lambda} (\sfL \cap \sfM_\lambda),
\end{displaymath}
so it is sufficient to check the reverse inclusion. Let $k$ be an object of the category on the left. In particular, $k$ lies in $\bigvee_{\lambda \in \Lambda} \sfM_\lambda$ and so by Lemma~\ref{lem:finite} there exist $\lambda_1,\ldots, \lambda_n$ such that $k$ lies in $\bigvee_{i=1}^n \sfM_{\lambda_i}$. Thus
\begin{displaymath}
k \in \sfL \cap \left(\bigvee_{i=1}^n \sfM_{\lambda_i}\right) = \bigvee_{i=1}^n(\sfL \cap \sfM_{\lambda_i})
\end{displaymath}
by distributivity of $\Thick(\sfK)$. Of course we have
\begin{displaymath}
\bigvee_{i=1}^n(\sfL \cap \sfM_{\lambda_i}) \subseteq \bigvee_{\lambda \in \Lambda} (\sfL \cap \sfM_\lambda)
\end{displaymath}
and so $k$ lies in $\sfL \cap \bigvee_{\lambda \in \Lambda} \sfM_\lambda$ proving the desired equality. 
\end{proof}

\begin{rem}
This is really a special case of a more general statement about lattices (standard lore to a different crowd). If $L$ is an algebraic (aka compactly generated) lattice, then if it is distributive it is automatically a frame (cf.\ Lemma~\ref{lem:compact} for the fact that $\Thick(\sfK)$ is algebraic). In fact, one can even deduce that such a frame is spatial; we give a proof imminently.
\end{rem}

\subsection{Spatiality}

We have seen that distributivity implies infinite distributivity for lattices of thick subcategories. We next discuss the property of being spatial; it turns out that this too is for free.

\begin{thm}\label{thm:spatial}
Suppose that $\Thick(\sfK)$ is distributive. If $s\in \sfK$ and $\sfM$ is a thick subcategory of $\sfK$ with $s\notin \sfM$ then there exists a meet-prime thick subcategory $\sfP$ such that $\sfM\subseteq \sfP$ and $s\notin \sfP$.
\end{thm}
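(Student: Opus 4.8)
This is a Zorn's lemma argument, entirely analogous to the classical existence of prime ideals in commutative algebra, but carried out in the lattice $\Thick(\sfK)$. The plan is to consider the poset $\mcS$ of thick subcategories $\sfN$ with $\sfM \subseteq \sfN$ and $s \notin \sfN$, ordered by inclusion, and produce a maximal element $\sfP$, then show maximality forces meet-primeness using distributivity.

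\begin{proof}
Consider the set
\begin{displaymath}
\mcS = \{\sfN \in \Thick(\sfK) \mid \sfM \subseteq \sfN \text{ and } s\notin \sfN\},
\end{displaymath}
which is nonempty since $\sfM \in \mcS$. We order $\mcS$ by inclusion. Given a chain $\{\sfN_i\mid i\in I\}$ in $\mcS$, its union $\bigcup_i \sfN_i$ is again a thick subcategory: closure under suspensions and summands is immediate, and closure under cones follows since any triangle involves only finitely many objects, which lie in a common $\sfN_i$ by the chain condition (this is essentially the argument of Lemma~\ref{lem:finite}). Moreover $\sfM \subseteq \bigcup_i \sfN_i$ and $s\notin \bigcup_i \sfN_i$, so $\bigcup_i \sfN_i$ is an upper bound for the chain in $\mcS$. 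By Zorn's lemma, $\mcS$ has a maximal element $\sfP$.

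We claim $\sfP$ is meet-prime. First, $\sfP \neq \sfK$ since $s\notin \sfP$. Now suppose $\sfA \cap \sfB \subseteq \sfP$ for thick subcategories $\sfA, \sfB$, and suppose for contradiction that $\sfA \nsubseteq \sfP$ and $\sfB \nsubseteq \sfP$. Then $\sfP \vee \sfA$ and $\sfP \vee \sfB$ both strictly contain $\sfP$, and both contain $\sfM$, so by maximality of $\sfP$ in $\mcS$ we must have $s\in \sfP\vee\sfA$ and $s\in \sfP\vee\sfB$. Hence
\begin{displaymath}
s\in (\sfP\vee\sfA)\cap(\sfP\vee\sfB) = \sfP \vee (\sfA\cap\sfB)
\end{displaymath}
where the equality is distributivity of $\Thick(\sfK)$ (using that $\sfP\vee(\sfA\cap\sfB) = (\sfP\vee\sfA)\cap(\sfP\vee\sfB)$ holds in any distributive lattice). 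But $\sfA\cap\sfB \subseteq \sfP$, so $\sfP \vee (\sfA\cap\sfB) = \sfP$, giving $s\in \sfP$, a contradiction. Therefore $\sfA\subseteq\sfP$ or $\sfB\subseteq\sfP$, so $\sfP$ is meet-prime. Since $\sfM\subseteq\sfP$ and $s\notin\sfP$ by construction, this completes the proof.
\end{proof}

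The only genuinely nontrivial ingredient is distributivity, which is exactly the hypothesis; the chain-union step is routine for thick subcategories, and the rest is the standard lattice-theoretic manipulation. I do not anticipate any real obstacle here — the argument is a direct transcription of the prime avoidance/maximal ideal argument into the lattice setting, with Lemma~\ref{lem:finite}-style finiteness making the union of a chain well-behaved.
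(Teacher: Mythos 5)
Your proof is correct and follows essentially the same route as the paper's: a Zorn's lemma argument on the poset of thick subcategories containing $\sfM$ and avoiding $s$, with unions giving upper bounds for chains, and distributivity (via $\sfP = \sfP \vee (\sfA\cap\sfB) = (\sfP\vee\sfA)\cap(\sfP\vee\sfB)$) forcing the maximal element to be meet-prime. The presentation differs only cosmetically (direct contradiction vs. the paper's phrasing), so there is nothing substantive to compare.
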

\begin{proof}
Let us, with a view to using Zorn's lemma, consider the set
\[
\mcF = \{\sfL \in \Thick(\sfK) \mid s\notin \sfL \text{ and } \sfM\subseteq \sfL\}.
\]
Clearly $\sfM \in \mcF$ and so $\mcF$ is not empty. We will show that every chain in $\mcF$ has an upper bound. Suppose then that $\{\sfL_i \mid i\in I\}$ is a chain in $\mcF$. Since being thick is a family of conditions on finite sets of objects, the category $\sfL = \cup_i \sfL_i$ is thick. As $s\notin \sfL_i$ for any $i\in I$ we have $s\notin \sfL$ and it is clear that $\sfM\subseteq \sfL$. Thus $\sfL\in \mcF$ and is an upper bound for the given chain.

By Zorn's lemma $\mcF$ contains a maximal element $\sfP$, which we shall show is meet-prime. Suppose then that $\sfA$ and $\sfB$ are thick subcategories with $\sfA\cap \sfB \subseteq \sfP$. We have $\sfP \subseteq \sfP \vee \sfA$ and so if $\sfA \nsubseteq \sfP$, by maximality of $\sfP$ in $\mcF$, we must have $s\in \sfP \vee \sfA$ to avoid a contradiction, and similarly for $\sfB$. With this in mind, consider the equation
\[
\sfP = \sfP \vee (\sfA \cap \sfB) = (\sfP \vee \sfA) \cap (\sfP \vee \sfB),
\]
where the first equality follows from $\sfA\cap \sfB \subseteq \sfP$ and the second from distributivity. If neither $\sfA$ nor $\sfB$ are contained in $\sfP$ then, by what we saw above, we have $s\in \sfP \vee \sfA$ and $s \in \sfP \vee \sfB$ and so $s\in \sfP$. But this is absurd, and so at least one of $\sfA$ or $\sfB$ is contained in $\sfP$. This shows that $\sfP$ is meet-prime as claimed.
\end{proof}

\begin{cor}\label{cor:spatial}
If $\Thick(\sfK)$ is distributive then it is a spatial frame.
\end{cor}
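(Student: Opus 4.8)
The plan is to obtain the corollary by combining the two results immediately preceding it with the dictionary between points and meet-prime elements recorded after Definition~\ref{defn:meetprime}; this is also the ``proof imminently'' promised in the remark following Proposition~\ref{prop:dist} about algebraic distributive lattices being spatial. Proposition~\ref{prop:dist} already promotes a distributive $\Thick(\sfK)$ to a frame, so the only remaining task is to verify that this frame has enough points in the sense of Definition~\ref{sfrm}.

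To that end I would take a witness to the failure of spatiality and produce a separating point. So, suppose $\sfM \nsubseteq \sfN$ in $\Thick(\sfK)$; then there is an object $s$ lying in $\sfM$ but not in $\sfN$. Feeding $s$ and the thick subcategory $\sfN$ into Theorem~\ref{thm:spatial} produces a meet-prime thick subcategory $\sfP$ with $\sfN \subseteq \sfP$ and $s \notin \sfP$. Under the bijection between meet-prime elements and points, $\sfP$ corresponds to the point $p\colon \Thick(\sfK) \to \Sierp$ that sends a thick subcategory $\sfL$ to $0$ exactly when $\sfL \subseteq \sfP$. Then $p(\sfN) = 0$ because $\sfN \subseteq \sfP$, while $\sfM \nsubseteq \sfP$ (as $s \in \sfM$ but $s \notin \sfP$), so $p(\sfM) = 1$. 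This is precisely the point demanded by Definition~\ref{sfrm}, so $\Thick(\sfK)$ is a spatial frame.

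I do not anticipate any real obstacle: the substantive work lives in Theorem~\ref{thm:spatial}, whose Zorn's-lemma argument manufactures the meet-prime subcategory, and in Proposition~\ref{prop:dist}, which supplies infinite distributivity. The one thing to keep straight when writing it up is the orientation of the point/meet-prime correspondence\textemdash{}a meet-prime $\sfP$ is a large element and its associated point kills everything contained in $\sfP$\textemdash{}so Theorem~\ref{thm:spatial} must be invoked with the thick subcategory $\sfN$ as the subobject to be dominated and the object $s$ as the witness that the containment $\sfM \subseteq \sfN$ fails, not the other way around.
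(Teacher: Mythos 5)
Your proposal is correct and follows essentially the same route as the paper: Proposition~\ref{prop:dist} gives the frame structure, and Theorem~\ref{thm:spatial} applied to a witnessing object $s$ produces a meet-prime thick subcategory whose associated point separates the two subcategories exactly as in the paper's proof (up to relabeling which of the two subcategories plays which role).
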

\begin{proof}
Suppose that $\sfM, \sfN \in \Thick(\sfK)$ are such that $\sfN \nsubseteq \sfM$. Then there is an $s\in \sfN$ with $s\notin \sfM$. By the Theorem we can find a meet-prime $\sfP$ with $\sfM\subseteq \sfP$ and $s\notin \sfP$. It follows that $\sfN\nsubseteq \sfP$ and the point $p$ of $\Thick(\sfK)$ corresponding to $\sfP$ satisfies $p(\sfM) = 0$ and $p(\sfN) = 1$ as required.
\end{proof}

The following statement is immediate from this last corollary, but striking enough to bear repeating.

\begin{cor}\label{cor:sober}
If $\Thick(\sfK)$ is distributive then there is a sober topological space, namely $X = \Spec \Thick(\sfK)$, such that $\Thick(\sfK)$ is isomorphic to the lattice of open subsets of $X$. 
\end{cor}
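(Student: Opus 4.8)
The plan is simply to unwind Stone duality, since by Corollary~\ref{cor:spatial} the lattice $\Thick(\sfK)$ is already a spatial frame. First I would recall that for a frame $F$ the comparison map $F \to \mcO(\pt F)$, $l \mapsto U_l$, is always surjective (the basic opens $U_l$ are closed under finite intersections and arbitrary unions, hence exhaust the topology), and is injective precisely when $F$ is spatial in the sense of Definition~\ref{sfrm}; this is the unit of the adjunction underlying the duality $\SFrm^{\op} \simeq \Sob$ recalled above. Applying this with $F = \Thick(\sfK)$ — which is a spatial frame by Corollary~\ref{cor:spatial} — and setting $X = \pt(\Thick(\sfK)) = \CLat(\Thick(\sfK),\Sierp)$, which is exactly the space denoted $\Spec \Thick(\sfK)$, yields the desired isomorphism of lattices $\Thick(\sfK) \xrightarrow{\sim} \mcO(X)$.

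It then remains only to observe that $X$ is sober. One could invoke the general fact that any space of the form $\pt(F)$ lies in the essential image of $\mcO$ and is therefore sober, but more directly this is an instance of Lemma~\ref{lem:sober}: $\Thick(\sfK)$ is a complete lattice by Lemma~\ref{lem:thicklattice}, so $X = \CLat(\Thick(\sfK),\Sierp)$ is sober. For good measure I would note, via the bijection between points and meet-prime elements recalled after Definition~\ref{defn:meetprime}, that the underlying set of $X$ is precisely the set of meet-prime (equivalently, in the language of Theorem~\ref{thm:spatial}, ``prime'') thick subcategories of $\sfK$, so that $X$ admits the intrinsic description promised in the introduction.

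There is essentially no obstacle to overcome here: all of the substance lives in Theorem~\ref{thm:spatial} and Corollary~\ref{cor:spatial}, and the present statement is a bookkeeping consequence of those together with the material assembled in Section~2. The only point meriting a sentence of care is the identification of the abstract frame-theoretic space of points $\pt(\Thick(\sfK))$ with the concrete space $\Spec \Thick(\sfK)$ of meet-prime thick subcategories, and the verification that it is sober; both follow at once from Lemma~\ref{lem:sober} and the remark on meet-primes.
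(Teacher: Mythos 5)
Your proof is correct and follows exactly the route the paper intends: the paper treats this corollary as an immediate consequence of Corollary~\ref{cor:spatial} together with Stone duality, which is precisely what you spell out, including the identification of $\Spec\Thick(\sfK)$ with the space of meet-prime thick subcategories and the appeal to Lemma~\ref{lem:sober} for sobriety. No gaps; your unwinding of the unit map $l\mapsto U_l$ being an isomorphism for spatial frames is just the explicit form of the duality the paper invokes.
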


\subsection{Coherence}

We have shown that as soon as $\Thick(\sfK)$ is distributive it is necessarily a spatial frame. It is then natural to ask about coherence. This turns out to be a more delicate issue and can fail, at least in principle, in a couple of ways.

The first matter is to identify the compact thick subcategories, which is independent of any distributivity hypothesis. Such subcategories have appeared in various spots in the literature, usually under the moniker `finitely generated'.

\begin{lem}\label{lem:compact}
A thick subcategory $\sfL\in \Thick(\sfK)$ is compact if and only if there exists a $k\in \sfK$ such that $\sfL = \thick(k)$.
\end{lem}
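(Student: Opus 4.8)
The plan is to prove the two implications separately, the easier being that $\thick(k)$ is compact.

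First I would show that every principal thick subcategory $\thick(k)$ is compact. Suppose $\thick(k) \subseteq \bigvee_{\lambda\in\Lambda}\sfM_\lambda$. Then in particular $k$ lies in the join, so by Lemma~\ref{lem:finite} there are finitely many indices $\lambda_1,\ldots,\lambda_n$ with $k\in \sfM_{\lambda_1}\vee\cdots\vee\sfM_{\lambda_n}$. Since $\thick(k)$ is by definition the smallest thick subcategory containing $k$, and $\sfM_{\lambda_1}\vee\cdots\vee\sfM_{\lambda_n}$ is thick and contains $k$, we get $\thick(k)\subseteq\sfM_{\lambda_1}\vee\cdots\vee\sfM_{\lambda_n}$, which is exactly compactness of $\thick(k)$.

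For the converse, let $\sfL$ be a compact thick subcategory. The key observation is that $\sfL$ is the join of the (upward-directed, even filtered) family of its principal sub-thick-subcategories: $\sfL = \bigvee_{k\in\sfL}\thick(k)$. Indeed each $\thick(k)$ for $k\in\sfL$ is contained in $\sfL$, giving the inclusion $\supseteq$; conversely every object of $\sfL$ lies in the join since $k\in\thick(k)$, which gives $\subseteq$ as thick subcategories are determined by their objects. Now applying compactness of $\sfL$ to this join, there exist $k_1,\ldots,k_n\in\sfL$ with $\sfL \subseteq \thick(k_1)\vee\cdots\vee\thick(k_n)$; but this join equals $\thick(k_1\oplus\cdots\oplus k_n)$ since the right-hand side is thick, contains each $k_i$, and conversely is contained in any thick subcategory containing all the $k_i$ (using closure under finite direct sums, which follows from closure under cones and suspensions). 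Combined with the reverse inclusion $\thick(k_1\oplus\cdots\oplus k_n)\subseteq\sfL$ (each $k_i\in\sfL$), we conclude $\sfL = \thick(k_1\oplus\cdots\oplus k_n)$, so $\sfL$ is principal with generator $k = k_1\oplus\cdots\oplus k_n$.

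I do not anticipate a serious obstacle here; the only point requiring a little care is the identification $\thick(k_1)\vee\cdots\vee\thick(k_n) = \thick(k_1\oplus\cdots\oplus k_n)$, i.e.\ that a finite join of principal thick subcategories is again principal, which rests on thick subcategories being closed under finite direct sums (a standard consequence of the axioms, since $k_1\oplus k_2$ is the cone of the zero map $k_1 \to \Sigma k_2$ up to suspension, or directly a summand of a cone). Everything else is a routine manipulation of the lattice structure on $\Thick(\sfK)$ established in Lemma~\ref{lem:thicklattice}.
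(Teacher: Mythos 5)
Your proof is correct and follows essentially the same route as the paper: Lemma~\ref{lem:finite} for the principal-implies-compact direction, and writing $\sfL = \bigvee_{l\in\sfL}\thick(l)$ plus compactness for the converse, with the paper simply taking $k = l_1\oplus\cdots\oplus l_n$ where you spell out the identification $\thick(l_1)\vee\cdots\vee\thick(l_n)=\thick(l_1\oplus\cdots\oplus l_n)$. No gaps.
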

\begin{proof}
Suppose first that $\sfL$ is compact. We have
\begin{displaymath}
\sfL = \bigvee_{l\in \sfL} \thick(l).
\end{displaymath}
By compactness there exist $l_1,\ldots, l_n$ in $\sfL$ such that 
\begin{displaymath}
\sfL = \bigvee_{i=1}^n \thick(l_i)
\end{displaymath}
and so we can take $k= l_1\oplus \cdots \oplus l_n$.

On the other hand, suppose that $\sfL = \thick(k)$ and we have a family of thick subcategories $\{\sfM_\lambda\mid \lambda \in \Lambda\}$ with
\begin{displaymath}
\sfL \subseteq \bigvee_{\lambda \in \Lambda} \sfM_\lambda.
\end{displaymath}
By Lemma~\ref{lem:finite} we can find $\lambda_1,\ldots, \lambda_n$ such that $k$ is in the thick subcategory generated by the $\sfM_{\lambda_i}$ for $1\leq i \leq n$. This subcategory is thick and so contains $\sfL = \thick(k)$, i.e.\ we have
\begin{displaymath}
\sfL \subseteq \bigvee_{i=1}^n \sfM_{\lambda_i}. 
\end{displaymath}
\end{proof}

\begin{rem}
We will sometimes adopt the common terminology mentioned above and call a thick subcategory $\sfL$ of the form $\sfL = \thick(k)$ finitely generated (due to the presence of finite direct sums being finitely generated is the same as being principal).
\end{rem}

\begin{rem}
We note that, since every thick subcategory is the union of the thick subcategories generated by each of its objects, $\Thick(\sfK)$ is always an algebraic lattice, i.e.\ every element is a join of compact ones.
\end{rem}

It follows from the lemma that any finite join of compact objects is again compact. However, coherence requires that the compact objects form a bounded sublattice; the following example illustrates the most naive obstruction.

\begin{ex}
Fix a field $k$ and consider $\sfK = \sfD_\mathrm{tors}^\b(k[x])$ the derived category of bounded complexes of $k[x]$-modules with finitely generated torsion cohomology. This is a thick subcategory of $\sfD^\b(k[x])$ and so $L = \Thick(\sfK)$ is distributive, and hence a frame, by virtue of being a sublattice of the coherent frame $\Thick(\sfD^\b(k[x]))$.

However, $L$ is not coherent. Indeed $\sfD_\mathrm{tors}^\b(k[x])$ is not finitely generated and hence not compact: there is a direct sum decomposition
\[
\sfD_\mathrm{tors}^\b(k[x]) = \bigoplus_{\alpha} \thick(k(\alpha))
\]
where $\alpha$ runs over the closed points of $\AA^1_k$. The corresponding space, $\Spec L$, is in bijection with the closed points of $\AA^1_k$ and has the discrete topology. In particular, it is not quasi-compact. 
\end{ex}

There is also another potential subtlety: even if $\Thick(\sfK)$ is distributive and $\sfK$ itself is finitely generated it is not clear that an intersection of finitely generated thick subcategories remains finitely generated. We are not aware of an example where this goes awry, but suspect such an example exists.


\subsection{Modularity}

Let $L$ be a complete lattice. We recall that $L$ is \emph{modular} if for any triple $l,m,s\in L$ such that $l\leq m$ we have $l\vee (s\wedge m) = (l\vee s)\wedge m.$ This is a weakening of distributivity, and is satisfied in a number of contexts, for instance lattices of submodules are always modular. While the analogy between thick subcategories and lattices of ideals or submodules has been quite fruitful, it does not extend to include this observation.

\begin{ex}\label{ex:notmodular}
Consider $\sfK = \sfD^\b(\PP^1)$ over some fixed, but nameless, ground field. The lattice of thick subcategories of $\sfK$ is given, as a set, by
\begin{displaymath}
\{\text{specialization closed subsets of } \PP^1\} \coprod \ZZ,
\end{displaymath}
where the first component classifies tensor ideals, the second is given by the twisting sheaves, and the two pieces interact trivially, see \cite{KS19}*{Section~4.1} for further discussion. An embedding of the minimal non-modular lattice is given as follows
\begin{displaymath}
	\begin{tikzpicture}[scale=0.7]
    
		\node (v0) at (0,0) {};
    \node (va) at (-1,1) {};
    \node (vb) at (1,2) {};
		\node (vc) at (-1,3) {};
		\node (v1) at (0,4) {};
    
    \draw[fill] (v0)  circle (2pt) node [below] {0};
		\draw[fill] (va)  circle (2pt) node [left] {$\{x\}$};
		\draw[fill] (vb)  circle (2pt) node [right] {$\mcO$};
		\draw[fill] (vc)  circle (2pt) node [left] {$\{x,y\}$};
		\draw[fill] (v1)  circle (2pt) node [above] {$\sfD^\b(\PP^1)$};

		\path[-] (v0) edge  node [above] {} (va);
		\path[-] (v0) edge  node [above] {} (vb);
		\path[-] (va) edge  node [above] {} (vc);
		\path[-] (vb) edge  node [above] {} (v1);
		\path[-] (vc) edge  node [above] {} (v1);

  \end{tikzpicture}
\end{displaymath}
where $x,y\in \PP^1$ are distinct points, the subsets indicate the corresponding tensor ideals of objects supported on those points, and $\mcO$ is shorthand for the thick subcategory it generates, showing that this lattice is non-modular.
\end{ex}
In fact this lattice is not even \emph{semi-modular}. We say that $l$ covers $m$ if $l> m$ and $l$ is minimal with this property, i.e. $\{n \in L \mid m\leq n \leq l\} = \{m,l\}$. A lattice is semi-modular if for all $l,m\in L$ we have that $l$ covers $l\wedge m$ implies that $m$ is covered by $l\vee m$. Indeed, taking $l$ to be $\thick(\mcO)$ and $m$ to be the thick subcategory generated by two distinct points gives a violation of this requirement.

However, there are examples of lattices of thick subcategories which are not distributive but do satisfy the modular law.

\begin{ex}\label{ex:onlymodular1}
The lattice of thick subcategories of $\sfD^\b(kA_2)$, for a field $k$, is given by the diamond lattice (as depicted in Example~\ref{ex:diamond}). This is not distributive, but it is modular (for trivial reasons).
\end{ex}

\begin{rem}
We note that $\Thick(\sfD^\b(kA_n))$ is not even semi-modular for $n\geq 3$.
\end{rem}

Due to our dearth of understanding of $\Thick(\sfK)$ in non-trivial non-distributive examples it is difficult to provide further examples. However, modularity corresponds to a strong, but entirely natural, closure condition.

\begin{lem}\label{lem:modular}
Let $\sfK$ be a triangulated category. Then $\Thick(\sfK)$ is modular if and only if for any $\sfL,\sfM,\sfS \in \Thick(\sfK)$ such that $\sfL \subseteq \sfM$, then $X\in \thick(\sfL, \sfS)\cap \sfM$ implies that $X\in \thick(\sfL, \sfS\cap \sfM)$. That is to say, if $X\in \sfM$ is built from $\sfL$ and $\sfS$ then $X$ is built from objects in $\sfL$ and $\sfS\cap \sfM$. 
\end{lem}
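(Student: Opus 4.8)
The plan is to recognise that this lemma is, in essence, a dictionary entry: it simply translates the lattice-theoretic modular law for $\Thick(\sfK)$ into a statement about which objects lie in which thick subcategories, so almost no work is needed beyond unwinding definitions.

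First I would unwind the two sides of the modular law using Lemma~\ref{lem:thicklattice}. There meets in $\Thick(\sfK)$ are intersections and the join of two thick subcategories $\sfA,\sfB$ is $\thick(\sfA\cup\sfB) = \thick(\sfA,\sfB)$. Hence, for a triple $\sfL,\sfM,\sfS$ with $\sfL\subseteq\sfM$, the modular identity $\sfL\vee(\sfS\wedge\sfM) = (\sfL\vee\sfS)\wedge\sfM$ reads exactly
\[
\thick(\sfL, \sfS\cap\sfM) \;=\; \thick(\sfL,\sfS)\cap\sfM .
\]
So $\Thick(\sfK)$ is modular precisely when these two thick subcategories coincide for every such triple.

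Next I would observe that one containment is automatic whenever $\sfL\subseteq\sfM$: since $\sfL$ and $\sfS\cap\sfM$ are both contained in $\thick(\sfL,\sfS)$, and (using $\sfL\subseteq\sfM$) both contained in the thick subcategory $\sfM$, we get $\thick(\sfL,\sfS\cap\sfM)\subseteq\thick(\sfL,\sfS)\cap\sfM$ — this is just the general lattice inequality $l\vee(s\wedge m)\le(l\vee s)\wedge m$ valid when $l\le m$. Therefore modularity of $\Thick(\sfK)$ is equivalent to the reverse inclusion $\thick(\sfL,\sfS)\cap\sfM\subseteq\thick(\sfL,\sfS\cap\sfM)$ holding for all $\sfL\subseteq\sfM$, and this reverse inclusion is precisely the object-level assertion in the statement: every $X\in\thick(\sfL,\sfS)\cap\sfM$ lies in $\thick(\sfL,\sfS\cap\sfM)$. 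Reading this equivalence in both directions yields the ``if'' and ``only if'' simultaneously.

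There is no genuine obstacle; the only points that require care are to keep the hypothesis $\sfL\subseteq\sfM$ in play throughout (it is needed both for the automatic inclusion to go in the correct direction and because the modular law only concerns comparable pairs), and to note that quantifying over all triples in the lattice matches quantifying over all $\sfL,\sfM,\sfS\in\Thick(\sfK)$ with $\sfL\subseteq\sfM$ on the object side.
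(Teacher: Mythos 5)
Your proposal is correct and follows essentially the same route as the paper: translate the modular law via the description of meets and joins in $\Thick(\sfK)$, note that the inclusion $\thick(\sfL,\sfS\cap\sfM)\subseteq\thick(\sfL,\sfS)\cap\sfM$ is automatic when $\sfL\subseteq\sfM$, and conclude that modularity amounts exactly to the reverse containment, which is the stated object-level condition. Nothing is missing.
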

\begin{proof}
Let $\sfL,\sfM$, and $\sfS$ be as in the statement. The modular inequality
\begin{displaymath}
\sfL \vee (\sfS \wedge \sfM) \leq (\sfL \vee \sfS)\wedge \sfM
\end{displaymath}
always holds. Indeed, this reads
\begin{displaymath}
\thick(\sfL,\sfS\cap \sfM) \subseteq \thick(\sfL,\sfS)\cap \sfM,
\end{displaymath}
which holds as $\sfL\subseteq \sfM$ so the left-hand side is contained in $\sfM$, and is evidently contained in $\thick(\sfL,\sfS)$, so is contained in their intersection.

Thus $\Thick(\sfK)$ is modular precisely if the right-hand side is contained in the left. This means that for $X\in \thick(\sfL,\sfS)\cap \sfM$ we must have $X\in \thick(\sfL,\sfS\cap \sfM)$.
\end{proof}

We conclude the section with a plentiful source of non-distributive (and, in general, most likely non-modular) examples. The necessary background for the statement and proof can be found in \cite{BondalReps}*{Section~2}.

\begin{lem}
Let $\sfK$ be a triangulated category and $k$ a field. If $\sfK$ is $k$-linear and Hom-finite with an exceptional pair $(E_1,E_2)$ such that $\sfK(E_1, \Sigma^iE_2)\neq 0$ for some $i$ then $\Thick(\sfK)$ is not distributive.
\end{lem}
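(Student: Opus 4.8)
The plan is to produce, inside $\Thick(\sfK)$, a sublattice isomorphic to the diamond $D$ of Example~\ref{ex:diamond}, which is not distributive; this immediately forces $\Thick(\sfK)$ to be non-distributive as well. The five elements will be $0$, the subcategory $\sfT = \thick(E_1,E_2)$, the two subcategories $\thick(E_1)$ and $\thick(E_2)$, and a fifth one $\thick(E_3)$ produced by a mutation. So the first step is to import the mutation formalism from \cite{BondalReps}*{Section~2}: set $E_3 = L_{E_1}E_2$, the left mutation, so that $(E_3,E_1)$ is again an exceptional pair with $\thick(E_3,E_1) = \sfT$, and there is a distinguished triangle
\begin{displaymath}
E_3 \to V \to E_2 \to \Sigma E_3, \qquad V = \bigoplus_i \sfK(E_1,\Sigma^i E_2)\otimes_k \Sigma^{-i}E_1 ,
\end{displaymath}
so that $V$ is a finite direct sum of shifts of $E_1$. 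I would also record the elementary fact that, for any exceptional object $E$, the subcategory $\thick(E)$ consists precisely of the finite direct sums of shifts of $E$ (its graded endomorphism algebra being $k$); hence $\thick(E)$ has no thick subcategories besides $0$ and itself, every nonzero object of it generates it, and, by Krull--Schmidt, every nonzero indecomposable object of it is a shift of $E$. The key observation is that the hypothesis $\sfK(E_1,\Sigma^i E_2)\neq 0$ for some $i$ is exactly the assertion that $V\neq 0$, and therefore $\thick(V) = \thick(E_1)$.

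Next I would verify the lattice relations. The joins are easy: $\thick(E_1)\vee\thick(E_2) = \sfT$ by definition, $\thick(E_1)\vee\thick(E_3) = \thick(E_3,E_1) = \sfT$ by the mutation identity, and from the triangle $V\in\thick(E_2,E_3)$, whence $E_1\in\thick(V)\subseteq\thick(E_2,E_3)$ and so $\thick(E_2)\vee\thick(E_3)\supseteq\thick(E_1,E_2) = \sfT$. For the meets I would argue by contradiction: if $\thick(E_i)\cap\thick(E_j)$ contained a nonzero object then, by the indecomposable statement above, $E_i\cong\Sigma^c E_j$ for some $c$. The cases $(i,j)=(1,2)$ and $(i,j)=(3,1)$ are excluded at once by the orthogonality built into the exceptional pairs $(E_1,E_2)$ and $(E_3,E_1)$ (for instance $E_1\cong\Sigma^c E_2$ would give $\sfK(E_2,\Sigma^{-c}E_1)\cong\sfK(E_2,E_2) = k\neq 0$, contradicting $\sfK(E_2,\Sigma^j E_1)=0$ for all $j$). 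The remaining case $(i,j)=(2,3)$ is the one I expect to be the only real subtlety: $E_3\in\thick(E_2)$ would, via the defining triangle, put $V$ into $\thick(E_2)$, hence $E_1\in\thick(V)\subseteq\thick(E_2)$, contradicting $\thick(E_1)\cap\thick(E_2)=0$ together with $E_1\neq 0$. This is precisely where the mutated object must be fed back through its triangle, and also where the non-orthogonality hypothesis is genuinely used: without it $V = 0$, so $E_3$ is merely a shift of $E_2$ and the fifth element of the would-be diamond collapses onto $\thick(E_2)$.

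Finally, these relations exhibit $\{0,\thick(E_1),\thick(E_2),\thick(E_3),\sfT\}$ as a diamond-shaped sublattice of $\Thick(\sfK)$, and distributivity already fails on it:
\begin{displaymath}
\thick(E_1)\cap\bigl(\thick(E_2)\vee\thick(E_3)\bigr) = \thick(E_1)\cap\sfT = \thick(E_1) \neq 0 = \bigl(\thick(E_1)\cap\thick(E_2)\bigr)\vee\bigl(\thick(E_1)\cap\thick(E_3)\bigr),
\end{displaymath}
the last inequality holding because $E_1\neq 0$. Hence $\Thick(\sfK)$ is not distributive.
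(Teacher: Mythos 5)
Your proof is correct and follows essentially the same route as the paper: both exhibit the diamond sublattice $\{0,\thick(E_1),\thick(E_2),\thick(L_{E_1}E_2),\thick(E_1,E_2)\}$ inside $\Thick(\sfK)$, using that the thick subcategory generated by an exceptional object is just the additive closure of its shifts. The only difference is presentational: where the paper quotes from mutation theory that $L_{E_1}E_2$ is exceptional and not a shift of $E_1$ or $E_2$, you verify the required meet and join relations directly from the mutation triangle, which is where the hypothesis $\sfK(E_1,\Sigma^iE_2)\neq 0$ enters in both arguments.
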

\begin{proof}
Set $\sfE = \thick(E_1,E_2)$. Given the corresponding injection $\Thick(\sfE) \to \Thick(\sfK)$, which preserves binary meets and joins, it is enough to show that $\Thick(\sfE)$ is not distributive. Consider then the left mutation $F = L_{E_1}E_2$ of $E_2$ along $E_1$. This is again an exceptional object and, by the assumption that $\sfK(E_1, \Sigma^iE_2)\neq 0$, $F$ is not isomorphic to a shift of $E_1$ or $E_2$. The thick subcategory generated by an exceptional object $E$ is just $\add(\Sigma^i E \mid i\in \ZZ)$ and so we obtain a copy 
\begin{displaymath}
	\begin{tikzpicture}[scale=0.75]
    
		\node (v0) at (0,0) {};
    \node (va) at (-2,2) {};
    \node (vb) at (0,2) {};
		\node (vc) at (2,2) {};
		\node (v1) at (0,4) {};
    
    \draw[fill] (v0)  circle (2pt) node [below] {0};
		\draw[fill] (va)  circle (2pt) node [left] {$E_1$};
		\draw[fill] (vb)  circle (2pt) node [right] {$E_2$};
		\draw[fill] (vc)  circle (2pt) node [right] {$F$};
		\draw[fill] (v1)  circle (2pt) node [above] {$\sfE$};

		\path[-] (v0) edge  node [above] {} (va);
		\path[-] (v0) edge  node [above] {} (vb);
		\path[-] (v0) edge  node [above] {} (vc);
		\path[-] (va) edge  node [above] {} (v1);
		\path[-] (vb) edge  node [above] {} (v1);
		\path[-] (vc) edge  node [above] {} (v1);

  \end{tikzpicture}
\end{displaymath}

of the diamond lattice (where we have just used the labels for the exceptional objects to indicate the thick subcategories they generate). This witnesses the failure of $\Thick(\sfE)$ to be distributive.
\end{proof}


%



\section{The fully functorial theory}\label{sec:ff}

In this section we use the adjunction between $\SFrm$ and $\CjSLat$ to assign a sober space to any essentially small triangulated category. This construction is universal and gives some measure of the lattice of thick subcategories. As we shall see it is a rather imperfect measure, but it has the advantage of being functorial with respect to all exact functors. 

\subsection{The construction}

We have constructed in Section~\ref{sec:t} the functor $T\colon \Tcat \to \CjSLat$, which sends a triangulated category $\sfK$ to its lattice of thick subcategories $T(\sfK) = \Thick(\sfK)$. We can combine this with our lattice theoretic noodling to get somewhere.

\begin{defn}\label{def:fspcnt}
We define the \emph{fully functorial non-tensor spectrum}
\begin{displaymath}
\fSpcnt\colon \Tcat^\op \to \Sob
\end{displaymath}
to be the composite
\begin{displaymath}
\xymatrix{
\Tcat^\op \ar[r]^-{T^\op} & \CjSLat^\op \ar[r]^-{\lefta^\op} & \SFrm^\op \ar[r]^-{\sim} & \Sob
}
\end{displaymath}
where $\lefta$ is the adjoint of Proposition~\ref{prop:adjoint1}.
\end{defn}

\begin{rem}
Let us explain the notation $\fSpcnt$: the f denotes functorial (or free, since this construction is as free as is reasonable in contrast to the construction of Section~\ref{sec:pf}) and the nt is for `no tensor' (to be read as $\Spc$n't, as in `tensorn't triangular geometry', with the consequent abbreviation tnt-geometry).
\end{rem}

\begin{ex}\label{ex:fspcnta2}
Consider the Dynkin quiver $A_2$ as in Example~\ref{ex:letsnotmeet} and denote by $P_1$ and $P_2$ the projectives and by $S_2$ the non-projective simple. Then the lattice of thick subcategories of $\sfD^\mathrm{b}(kA_2)$ is 
\begin{displaymath}
	\begin{tikzpicture}
    
		\node (v0) at (0,0) {};
    \node (va) at (-2,2) {};
    \node (vb) at (0,2) {};
		\node (vc) at (2,2) {};
		\node (v1) at (0,4) {};
    
    \draw[fill] (v0)  circle (2pt) node [below] {0};
		\draw[fill] (va)  circle (2pt) node [left] {$\thick(P_1)$};
		\draw[fill] (vb)  circle (2pt) node [right] {$\thick(P_2)$};
		\draw[fill] (vc)  circle (2pt) node [right] {$\thick(S_2)$};
		\draw[fill] (v1)  circle (2pt) node [above] {$\sfD^\mathrm{b}(kA_2)$};

		\path[-] (v0) edge  node [above] {} (va);
		\path[-] (v0) edge  node [above] {} (vb);
		\path[-] (v0) edge  node [above] {} (vc);
		\path[-] (va) edge  node [above] {} (v1);
		\path[-] (vb) edge  node [above] {} (v1);
		\path[-] (vc) edge  node [above] {} (v1);

  \end{tikzpicture}
\end{displaymath}
Note that $L = T(\sfD^\mathrm{b}(kA_2))$ is not distributive, and hence not a frame. We apply Construction~\ref{cons:left} to compute $\fSpcnt \sfD^\mathrm{b}(kA_2)$.

For brevity, let us denote by $0$ and $1$ the bottom and top elements and set
\begin{displaymath}
a= \thick(P_1), \; b = \thick(P_2), \text{ and } c = \thick(S_2).
\end{displaymath}
We need to compute $\spt(L) = \CjSLat(L, \Sierp)$. Suppose that $p\in \spt(L)$ satisfies $p(1)=1$. Then since $a\vee b = 1$ at least one of $a$ or $b$ must be sent to $1$. The roles of $a,b,c$ are interchangeable and so we see that if $p(1)=1$ then $p$ sends at most one of $a,b,c$ to $0$ and this determines $p$. This gives rise to $4$ points $\{p_a, p_b, p_c, p_\varnothing\}$ determined by sending $a$, $b$, and $c$ to $0$ in the first three instances, and sending all of $a,b,c$ to $1$ in the final instance. There is a fifth point $p_0$ sending all elements to $0$.

A subbase of opens is given by
\begin{displaymath}
U_a = \{p_b, p_c, p_\varnothing\}, U_b = \{p_a, p_c, p_\varnothing\}, U_c = \{p_a,p_b,p_\varnothing\}, \text{ and } U_1 = \{p_a,p_b,p_c, p_\varnothing\}.
\end{displaymath}

Thus we obtain the following space (where an edge indicates a specialization relation, i.e.\ that the top vertex of an edge is in the closure of the bottom one):
\begin{displaymath}
	\begin{tikzpicture}[scale=0.7]
    
		\node (v0) at (0,0) {};
    \node (va) at (-2,2) {};
    \node (vb) at (0,2) {};
		\node (vc) at (2,2) {};
		\node (v1) at (0,4) {};

    \draw[fill] (v0)  circle (2pt) node [below] {$p_\varnothing$};
		\draw[fill] (va)  circle (2pt) node [left] {$p_a$};
		\draw[fill] (vb)  circle (2pt) node [right] {$p_b$};
		\draw[fill] (vc)  circle (2pt) node [right] {$p_c$};
		\draw[fill] (v1)  circle (2pt) node [above] {$p_0$};

		\path[-] (v0) edge  node [above] {} (va);
		\path[-] (v0) edge  node [above] {} (vb);
		\path[-] (v0) edge  node [above] {} (vc);
		\path[-] (va) edge  node [above] {} (v1);
		\path[-] (vb) edge  node [above] {} (v1);
		\path[-] (vc) edge  node [above] {} (v1);

  \end{tikzpicture}
\end{displaymath}
with a unique generic point $p_\varnothing$ and unique closed point $p_0$.
\end{ex}

This looks rather a lot like the lattice we started with. We show in Proposition~\ref{prop:L} that this is not a coincidence.


\subsection{Failure to reconstruct the Balmer spectrum}\label{ssec:fail}

In this section we discuss the price for working with arbitrary exact functors: if $\sfK$ already has a spatial frame of thick subcategories, and hence has a space controlling the thick subcategories, then $\fSpcnt$ fails to recover this space. This is for the simple reason that the forgetful functor $i\colon \SFrm \to \CjSLat$ is only faithful and not full. Thus, for a spatial frame $F$, the counit
\begin{displaymath}
\varepsilon_F\colon \lefta i F \to F
\end{displaymath}
is an epimorphism, but cannot be an isomorphism in general.

\begin{ex}
We consider a pair of points, i.e.\ $\sfD^\mathrm{b}(k\times k) \cong \sfD^\mathrm{b}(k) \oplus \sfD^\mathrm{b}(k)$. The Balmer spectrum is a disjoint union of two points, and the corresponding frame (of opens or, as we label it, of radical $\otimes$-ideals) is
\begin{displaymath}
\begin{tikzpicture}[scale=0.7]
    
		\node (v0) at (0,0) {};
    \node (va) at (-2,2) {};
		\node (vc) at (2,2) {};
		\node (v1) at (0,4) {};
    
    \draw[fill] (v0)  circle (2pt) node [below] {0};
		\draw[fill] (va)  circle (2pt) node [left] {$\sfM$};
		\draw[fill] (vc)  circle (2pt) node [right] {$\sfN$};
		\draw[fill] (v1)  circle (2pt) node [above] {$\sfD^\mathrm{b}(k\times k)$};

		\path[-] (v0) edge  node [above] {} (va);
		\path[-] (v0) edge  node [above] {} (vc);
		\path[-] (va) edge  node [above] {} (v1);
		\path[-] (vc) edge  node [above] {} (v1);

  \end{tikzpicture}
\end{displaymath}
where $\sfM$ and $\sfN$ correspond to the two factors. The space $\fSpcnt\sfD^\mathrm{b}(k\times k)$, which is computed in a manner analogous to that used in Example~\ref{ex:fspcnta2}, is
\begin{displaymath}
	\begin{tikzpicture}[scale=0.7]
    
		\node (v0) at (0,0) {};
    \node (va) at (-2,2) {};
		\node (vc) at (2,2) {};
		\node (v1) at (0,4) {};

    \draw[fill] (v0)  circle (2pt) node [below] {$p_\varnothing$};
		\draw[fill] (va)  circle (2pt) node [left] {$p_\sfM$};
		\draw[fill] (vc)  circle (2pt) node [right] {$p_\sfN$};
		\draw[fill] (v1)  circle (2pt) node [above] {$p_0$};

		\path[-] (v0) edge  node [above] {} (va);
		\path[-] (v0) edge  node [above] {} (vc);
		\path[-] (va) edge  node [above] {} (v1);
		\path[-] (vc) edge  node [above] {} (v1);

  \end{tikzpicture}
\end{displaymath}
which has acquired two extra points and become both irreducible and local. The lattice of open subsets of $X = \fSpcnt\sfD^\mathrm{b}(k\times k)$ is
\begin{displaymath}
\begin{tikzpicture}[scale=0.7]
    
		\node (00) at (0,-2) {};
		\node (v0) at (0,0) {};
    \node (va) at (-2,2) {};
		\node (vc) at (2,2) {};
		\node (v1) at (0,4) {};
		\node (11) at (0,6) {};

    \draw[fill] (v0)  circle (2pt) node [right] {$U_\sfM\cap U_\sfN$};
		\draw[fill] (va)  circle (2pt) node [left] {$U_\sfM$};
		\draw[fill] (vc)  circle (2pt) node [right] {$U_\sfN$};
		\draw[fill] (v1)  circle (2pt) node [right] {$U_1$};
		\draw[fill] (00)  circle (2pt) node [right] {$U_0 = \varnothing$};
		\draw[fill] (11)  circle (2pt) node [right] {$X$};

		\path[-] (v0) edge  node [above] {} (va);
		\path[-] (v0) edge  node [above] {} (vc);
		\path[-] (va) edge  node [above] {} (v1);
		\path[-] (vc) edge  node [above] {} (v1);
		\path[-] (00) edge  node [above] {} (v0);
		\path[-] (11) edge  node [above] {} (v1);

  \end{tikzpicture}
\end{displaymath}
which contains a copy of the original lattice $\Thick(\sfD^\mathrm{b}(k\times k))$ via the embedding $l\mapsto U_l$.
\end{ex}

\subsection{Sobriety and a simplification}

We now further discuss the construction and analyse the space we produce; in particular, we see that the suggestive form of the examples we have treated so far is a general phenomenon. What we need works in complete generality, but is more enlightening and compelling with our construction in mind.

Let $L$ be a complete join semilattice and let $p\colon L\to \Sierp$ be a semipoint. Then, since $p$ is order preserving, $p^{-1}(0)$ is downward closed and, since $p$ preserves joins, $p^{-1}(0)$ is closed under all joins. Define $x\in L$ by $x = \vee p^{-1}(0)$. Then $x\in p^{-1}(0)$ by join closure, and for all $l\in p^{-1}(0)$ we have $l\leq x$. It follows that
\begin{displaymath}
p^{-1}(0) = [0,x] = \{l\in L\mid l\leq x\}.
\end{displaymath}
On the other hand, if $x\in L$ one easily checks that there is an associated semipoint given by the characteristic function for $L\setminus [0,x]$. 
This proves the following lemma.

\begin{lem}
There is a canonical identification of sets $L = \spt(L)$.
\end{lem}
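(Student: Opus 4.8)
The plan is to make precise the bijection already sketched in the paragraph immediately preceding the lemma and check it is well-defined in both directions. Given a complete join semilattice $L$, I would define a map $\Phi\colon \spt(L) \to L$ by $\Phi(p) = \bigvee p^{-1}(0)$, and a map $\Psi\colon L \to \spt(L)$ by sending $x\in L$ to the characteristic function $\chi_x$ of $L\setminus [0,x]$, i.e.\ $\chi_x(l) = 0$ if $l\leq x$ and $\chi_x(l) = 1$ otherwise. The first task is to verify that both maps land where claimed: that $\chi_x$ really is a semipoint (order-preserving and join-preserving into $\Sierp$), and that $\Phi$ is a well-defined function (the join exists since $L$ is complete).

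For $\Psi$ to be well-defined I need $\chi_x$ to preserve arbitrary joins. Order-preservation is immediate from the fact that $[0,x]$ is a downset. For joins: given a family $\{l_\lambda\}$, one has $\bigvee_\lambda l_\lambda \leq x$ if and only if $l_\lambda \leq x$ for all $\lambda$, which says exactly that $\chi_x(\bigvee_\lambda l_\lambda) = 1$ iff some $\chi_x(l_\lambda) = 1$, i.e.\ $\chi_x(\bigvee_\lambda l_\lambda) = \bigvee_\lambda \chi_x(l_\lambda)$ in $\Sierp$. (The empty join case gives $\chi_x(0) = 0$ since $0\leq x$, consistent with preservation of the bottom.) So $\chi_x \in \spt(L)$.

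Next I would check the two round-trips. For $\Phi\Psi = \id_L$: given $x$, we have $\chi_x^{-1}(0) = [0,x]$, and $\bigvee [0,x] = x$, so $\Phi(\Psi(x)) = x$. For $\Psi\Phi = \id_{\spt(L)}$: given a semipoint $p$, set $x = \bigvee p^{-1}(0)$. The argument in the paragraph before the lemma shows $p^{-1}(0) = [0,x]$ — the inclusion $p^{-1}(0)\subseteq [0,x]$ is because $x$ is an upper bound for $p^{-1}(0)$, and $[0,x]\subseteq p^{-1}(0)$ holds because $x\in p^{-1}(0)$ by join-closure of $p^{-1}(0)$ and $p$ is order-preserving, so $l\leq x$ forces $p(l)\leq p(x) = 0$. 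Hence $p$ and $\chi_x$ have the same fibre over $0$, and since a map to the two-element lattice $\Sierp$ is determined by $p^{-1}(0)$, we get $p = \chi_x = \Psi(\Phi(p))$.

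The argument is entirely routine; there is no real obstacle, the only thing to be slightly careful about is the empty-join/bottom-element bookkeeping and the observation that a map into $\Sierp$ is determined by the preimage of $0$. Assembling these, $\Phi$ and $\Psi$ are mutually inverse, giving the claimed canonical identification of sets $L = \spt(L)$.
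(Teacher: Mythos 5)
Your argument is correct and is essentially the paper's own proof: the paper establishes the bijection by noting $p^{-1}(0)=[0,\bigvee p^{-1}(0)]$ for a semipoint $p$ and, conversely, that the characteristic function of $L\setminus[0,x]$ is a semipoint, which is exactly your $\Phi$ and $\Psi$. You have simply spelled out the "one easily checks" step (join-preservation of $\chi_x$, including the empty join) and the two round-trip identities, so there is nothing to add.
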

We will use this identification without further mention and turn to discussing the topology. A subbase of opens for $\spt(L)$ is given by the subsets
\begin{displaymath}
U_l = \{p\in \spt(L) \mid p(l) = 1\}.
\end{displaymath}
Under the identification with $L$ we see that
\begin{align*}
U_l = \{x\in L\mid p_x(l) = 1\} &= \{x\in L\mid l\notin [0,x]\} \\
&= \{x\in L\mid l\nleq x\} \\
&= L\setminus [l,1].
\end{align*}

\begin{lem}
Under the identification $L = \spt(L)$ the closure of $l\in L$ is $[l,1]$.
\end{lem}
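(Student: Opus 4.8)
The plan is to compute the closure of a point $l\in L$ directly from the description of the subbasic open sets $U_m = L\setminus[m,1]$ just established. Recall that the closure of $l$ is the intersection of all closed sets containing $l$, equivalently the complement of the union of all subbasic opens \emph{not} containing $l$; more usefully, a point $x$ lies in $\overline{\{l\}}$ precisely when every open set containing $x$ also contains $l$, and it suffices to test this on the subbasic opens $U_m$.

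First I would unravel the membership condition. We have $x\in U_m \iff m\nleq x$, and $l\in U_m \iff m\nleq l$. So $x\in\overline{\{l\}}$ iff for every $m\in L$, $m\nleq x \implies m\nleq l$, equivalently (contrapositive) $m\leq l \implies m\leq x$. Taking $m=l$ immediately gives $l\leq x$, so $x\in\overline{\{l\}}\implies x\in[l,1]$. Conversely, if $l\leq x$ then $m\leq l\implies m\leq x$ for all $m$ by transitivity, so the condition holds and $x\in\overline{\{l\}}$. Hence $\overline{\{l\}}=[l,1]$.

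One small subtlety to address is that the $U_m$ form only a \emph{subbase}, not a base, so a priori testing specialization on them is not obviously enough; but the standard fact is that $x$ is in the closure of $l$ iff every \emph{subbasic} open containing $x$ contains $l$, since any open containing $x$ contains a finite intersection $U_{m_1}\cap\cdots\cap U_{m_k}$ of subbasic opens containing $x$, and if each such $U_{m_i}$ contains $l$ then so does the intersection. I would either invoke this directly or spell out the one-line argument.

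I do not anticipate a genuine obstacle here — the statement is essentially a bookkeeping consequence of the computation $U_l=L\setminus[l,1]$ from the preceding lemma, and the only thing requiring a moment's care is the subbase-versus-base point just mentioned. The proof is therefore short: translate ``$x\in\overline{\{l\}}$'' into the order-theoretic condition ``$m\leq l\implies m\leq x$ for all $m$'', then observe this is equivalent to $l\leq x$ by taking $m=l$ in one direction and transitivity in the other.
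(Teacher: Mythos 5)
Your proof is correct and follows essentially the same route as the paper's: both directions reduce membership in $\overline{\{l\}}$ to the order-theoretic condition via the subbasic opens $U_m = L\setminus[m,1]$, and both handle the subbase-versus-base point by noting that opens are unions of finite intersections of the $U_m$'s. The only cosmetic difference is that you phrase the forward direction via the contrapositive and the choice $m=l$, whereas the paper tests directly against the single open $U_l$; these are the same argument.
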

\begin{proof}
Suppose that $m\in \overline{\{l\}}$. This occurs if and only if for every closed subset $V$ of $L$ we have $l\in V$ implies $m\in V$, i.e.\ if for every open subset $U$ we have $m\in U$ implies $l\in U$. Thus, if $m\in U_l = L\setminus [l,1]$ we would have $l\in U_l$ which is nonsense. So $m\notin U_l$, i.e.\ $m\in [l,1]$.

On the other hand, suppose that $m\in [l,1]$. If $m\in U_r$, for $r\in L$, i.e.\ $m\notin [r,1]$ but $l\notin U_r$, i.e.\ $r\leq l$, then since we have assumed $l\leq m$ we obtain $r\leq m$ which is a contradiction. Thus if $m\in U_r$ we must have $l\in U_r$. Since every open is obtained from finite intersections of these subsets, it follows that if $m$ is in an open subset then $l$ must also lie in that open. Hence $m\in \overline{\{l\}}$.
\end{proof}

Combining what we have learned from our two lemmas we deduce the following result.

\begin{prop}\label{prop:L}
The space $\spt(L)$ is homeomorphic to $L$ with the topology whose closed subsets are finite unions of principal upward closed subsets of $L$, i.e.\ the subsets $[l,1]$ for $l\in L$. In particular, $\spt(L)$ is sober.
\end{prop}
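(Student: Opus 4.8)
The plan is to combine the two lemmas immediately preceding the proposition. The first lemma gives a canonical bijection $L \cong \spt(L)$, sending $x\in L$ to the semipoint $p_x$ which is the characteristic function of $L\setminus[0,x]$. The second lemma computes, under this bijection, the closure of a point: $\overline{\{l\}} = [l,1]$. So first I would simply invoke these to transport the topology on $\spt(L)$ to $L$, and then identify its closed sets explicitly.

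The key step is to check that under $L\cong\spt(L)$ the subbasic opens are exactly the complements of principal up-sets, which is already recorded in the display preceding the proposition: $U_l = L\setminus[l,1]$. Hence the basic opens are finite intersections $\bigcap_{j=1}^n U_{l_j} = L\setminus\bigcup_{j=1}^n [l_j,1]$, so the basic \emph{closed} sets are finite unions of principal up-sets $\bigcup_{j=1}^n[l_j,1]$, and arbitrary closed sets are intersections of these. One then needs the mild observation that the collection of finite unions of principal up-sets is already closed under finite unions (obvious) and under \emph{arbitrary} intersections: an arbitrary intersection $\bigcap_i \big(\bigcup_{j\in F_i}[l_{ij},1]\big)$ of such sets, where each $F_i$ is finite, is again of this form because each member is upward closed and the whole family of up-sets in $L$ is closed under arbitrary intersections (the intersection of up-sets is an up-set, and an up-set of a complete lattice containing a given element contains the principal up-set on that element), and in fact the intersection of finitely-many-principal-up-set unions, distributed out, is a (possibly infinite, but then by completeness still principal after taking meets appropriately) — here it is cleanest to note directly that $[l,1]\cap[m,1] = [l\vee m,1]$, so distributing a finite intersection of finite unions again yields a finite union of principal up-sets. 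This shows the finite unions of principal up-sets already form the closed sets of a topology, so they \emph{are} the closed sets, with no need to pass to further intersections.

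For the final sentence, sobriety, I would note it follows from the second lemma together with the shape of the closed sets: an irreducible closed set cannot be a union of two proper closed subsets, so an irreducible closed set must be a single principal up-set $[l,1] = \overline{\{l\}}$, and its generic point $l$ is unique because $[l,1]=[l',1]$ forces $l=l'$ (each is the least element of the other). Alternatively one can simply cite Lemma~\ref{lem:sober} after observing $\spt(L)$ is of the stated form, but the direct argument is shorter here.

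The main obstacle — really the only thing requiring a moment's care — is verifying that the finite unions of principal up-sets are genuinely closed under the operations needed to be the closed sets of a topology, i.e.\ that one does not generate new closed sets by taking arbitrary intersections; the identity $[l,1]\cap[m,1]=[l\vee m,1]$ plus distributivity of intersection over union handles this cleanly, and the empty intersection gives all of $L=[0,1]$ while the empty union gives $\varnothing$, so the bounds are present too. Everything else is bookkeeping already done in the preceding two lemmas.
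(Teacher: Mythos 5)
Your route is the same as the paper's---transport the topology along the set-level identification $L=\spt(L)$ and use $U_l=L\setminus[l,1]$ together with the closure computation---but the step you yourself flag as ``the only thing requiring a moment's care'' is where the argument fails. For the finite unions of principal up-sets to \emph{be} the closed sets you must show this family is stable under \emph{arbitrary} intersections, and your verification only treats finite ones: the identity $[l,1]\cap[m,1]=[l\vee m,1]$ plus distributing a finite intersection over finite unions says nothing about $\bigcap_{i\in I}\bigl([l_{i,1},1]\cup\cdots\cup[l_{i,n_i},1]\bigr)$ for infinite $I$, and the parenthetical appeal to completeness does not help: distributing an infinite intersection produces a union of principal up-sets indexed by all choice functions, which is typically infinite. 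In fact the claimed stability is false in general. Take $L=\mathcal{P}(\NN)$, which occurs in the paper's setting as $\Thick\bigl(\bigoplus_{n\in\NN}\sfD^{\mathrm{b}}(k)\bigr)$ by Lemma~\ref{lem:product}, and intersect the closed sets $[\{2n\},1]\cup[\{2n+1\},1]$ over all $n$: the result is the set of subsets of $\NN$ meeting every pair $\{2n,2n+1\}$, which has infinitely many pairwise incomparable minimal elements (the sets choosing exactly one element from each pair), whereas a finite union $[Y_1,1]\cup\cdots\cup[Y_k,1]$ has all its minimal elements among $Y_1,\ldots,Y_k$. So the $U_l$ generate a topology with strictly more closed sets than the finite unions of the $[l,1]$; the accurate reading of the proposition is that the $[l,1]$, equivalently their finite unions, form a basis of closed subsets, and with that reading your transport of the topology along the bijection is fine, but your ``no need to pass to further intersections'' is not.

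The gap propagates into your sobriety argument, which assumes every closed set is a finite union of principal up-sets; and the proposed alternative of citing Lemma~\ref{lem:sober} is not available either, since that lemma concerns $\CLat(L,\Sierp)$, where the subbasic closed sets are already stable under finite unions because points preserve finite meets (there $Z_l\cup Z_m=Z_{l\wedge m}$), which is exactly what fails for semipoints. Sobriety is nevertheless true, by running your idea at the level of the generating closed sets rather than assuming the false description: if $C$ is irreducible and closed, write $C=\bigcap_{i\in I}\bigl([a_{i,1},1]\cup\cdots\cup[a_{i,n_i},1]\bigr)$; for each $i$, irreducibility applied to the finite closed cover $C=\bigcup_j\bigl(C\cap[a_{i,j},1]\bigr)$ yields some $j(i)$ with $C\subseteq[a_{i,j(i)},1]$, and setting $b=\bigvee_{i}a_{i,j(i)}$ gives $C\subseteq[b,1]\subseteq\bigcap_i[a_{i,j(i)},1]\subseteq C$, so $C=[b,1]=\overline{\{b\}}$, with $b$ unique by antisymmetry. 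With these two repairs---``basis of closed sets'' in place of ``all closed sets'', and this direct identification of the irreducible closed sets---your write-up delivers the intended content of the proposition.
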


From sobriety of $\spt(L)$ it follows that
\begin{displaymath}
\fSpcnt = \pt \circ \mcO \circ \spt \circ T^\op \cong \spt \circ T^\op
\end{displaymath}
i.e.\ in the context of our construction, for a triangulated category $\sfK$, $\fSpcnt(\sfK)$ is the set $T(\sfK) = \Thick(\sfK)$ with the topology described above. To reiterate: we have a canonical bijection between the points of $\fSpcnt(\sfK)$ and the elements of $\Thick(\sfK)$, and so $\fSpcnt(\sfK)$ is, in a weak sense, a coarse moduli space for the lattice of thick subcategories where the inclusion ordering of subcategories is recovered by the specialization ordering.

This is good news in the sense that we haven't lost information, but it means that computation in new cases is likely to be challenging\textemdash{}we have not made our lives any easier. However, one can optimistically expect that there are cases where, although we could not directly describe the elements of the lattice, one can construct the space by some other means. 

\subsection{The universal property}\label{ssec:universal}

As $\fSpcnt$ is freely constructed it satisfies a universal property: the unit of adjunction $T(\sfK) \to i\mcO\fSpcnt(\sfK)$ is the initial map from $T(\sfK)$ to a spatial frame, i.e.\ any join preserving map from $T(\sfK)$ to a spatial frame factors via the lattice of open subsets of $\fSpcnt(\sfK)$. 

Let us describe how this can be converted to a support theoretic universal property in line with Balmer's original work. It will be more convenient for us to work with open subsets, rather than Thomason subsets, and so our support theories will associate open, rather than closed, subsets to objects. This is, from the point of view of tt-geometry, purely cosmetic and corresponds to working with the Hochster dual of the Balmer spectrum.

Let $X$ be a Sober space with corresponding spatial frame $\mcO(X)$. 

\begin{defn}
A $\vee$-support datum on $\sfK$ with values in $X$ is a map $\sigma\colon \Ob\sfK \to \mcO(X)$ from objects of $\sfK$ to open subsets of $X$ such that for all $x,y\in \sfK$:
\begin{itemize}
\item[(i)] $\sigma(0) = \varnothing$;
\item[(ii)] $\sigma(\Sigma^i x) = \sigma(x)$ for all $i\in \ZZ$;
\item[(iii)] $\sigma(x\oplus y) = \sigma(x)\cup \sigma(y)$;
\item[(iv)] for every triangle $x\to y\to z$ we have $\sigma(y)\subseteq \sigma(x)\cup \sigma(z)$.
\end{itemize}
\end{defn}

A support datum's purpose in life, whether it be of the kind described above or as in the usual tt-setting, is to describe a lattice map from some lattice of thick subcategories to $\mcO(X)$.

\begin{prop}\label{prop:support}
Giving a $\vee$-support datum on $\sfK$ with values in $X$ is equivalent to specifying a morphism $T(\sfK) \to \mcO(X)$ in $\CjSLat$.
\end{prop}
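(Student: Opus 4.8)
The plan is to establish the equivalence as a bijection in both directions, checking that the constructions are mutually inverse.

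First I would start with a $\vee$-support datum $\sigma\colon \Ob\sfK \to \mcO(X)$. For each object $x\in\sfK$, conditions (i)--(iv) are precisely what is needed to ensure that $\{k\in\sfK \mid \sigma(k)\subseteq U\}$ is a thick subcategory for every open $U$; but more useful is the reverse: I would define a candidate map $\widehat\sigma\colon T(\sfK) \to \mcO(X)$ on a thick subcategory $\sfM$ by $\widehat\sigma(\sfM) = \bigcup_{x\in\sfM}\sigma(x)$. Conditions (i)--(iv) guarantee that this is well behaved: the empty join $\widehat\sigma(0) = \varnothing$ uses (i); that $\widehat\sigma$ is order preserving is immediate from the formula; and crucially, I need to check that $\widehat\sigma$ preserves arbitrary joins. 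For this, given a family $\{\sfM_\lambda\}$, one has $\bigvee_\lambda \sfM_\lambda = \thick(\bigcup_\lambda \sfM_\lambda)$, and by Lemma~\ref{lem:finite} every object of this join is built in finitely many steps from objects of the $\sfM_\lambda$; then (ii), (iii), and (iv) show by induction on the number of cones and summands that $\sigma(k) \subseteq \bigcup_\lambda \widehat\sigma(\sfM_\lambda)$ for any such $k$. Hence $\widehat\sigma(\bigvee_\lambda \sfM_\lambda) \subseteq \bigvee_\lambda \widehat\sigma(\sfM_\lambda)$, and the reverse inclusion is trivial, so $\widehat\sigma\in\CjSLat(T(\sfK), \mcO(X))$.

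Conversely, given $f\colon T(\sfK)\to\mcO(X)$ in $\CjSLat$, I would define $\sigma_f\colon \Ob\sfK\to\mcO(X)$ by $\sigma_f(x) = f(\thick(x))$. Then I verify the four axioms: (i) holds since $\thick(0) = 0$ and $f$ preserves the empty join; (ii) holds since $\thick(\Sigma^i x) = \thick(x)$; (iii) holds since $\thick(x\oplus y) = \thick(x)\vee\thick(y)$ and $f$ preserves binary joins; and (iv) holds since for a triangle $x\to y\to z$ one has $\thick(y)\subseteq\thick(x)\vee\thick(z) = \thick(x,z)$, whence $\sigma_f(y) = f(\thick(y)) \subseteq f(\thick(x)\vee\thick(z)) = \sigma_f(x)\cup\sigma_f(z)$ by monotonicity and join-preservation of $f$.

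Finally I would check the two round trips. Starting from $f$ and forming $\widehat{\sigma_f}$: for a thick subcategory $\sfM$, since $\sfM = \bigvee_{x\in\sfM}\thick(x)$ and $f$ preserves this join, $\widehat{\sigma_f}(\sfM) = \bigcup_{x\in\sfM} f(\thick(x)) = f(\sfM)$, so $\widehat{\sigma_f} = f$. Starting from $\sigma$ and forming $\sigma_{\widehat\sigma}$: for an object $x$, $\sigma_{\widehat\sigma}(x) = \widehat\sigma(\thick(x)) = \bigcup_{y\in\thick(x)}\sigma(y)$; the inclusion $\supseteq$ taking $y = x$ is clear, and the inclusion $\subseteq$ follows from the same inductive argument as above (using (ii), (iii), (iv) and the construction of $\thick(x)$ by iterated cones and summands from $x$, via Lemma~\ref{lem:finite}), showing $\sigma(y)\subseteq\sigma(x)$ for all $y\in\thick(x)$. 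Hence $\sigma_{\widehat\sigma} = \sigma$. The main obstacle is the join-preservation check for $\widehat\sigma$ — equivalently the inductive claim that $\sigma(k)\subseteq\sigma(x)$ whenever $k\in\thick(x)$ — which is where axioms (ii)--(iv) are genuinely used; everything else is routine bookkeeping.
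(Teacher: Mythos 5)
Your proof is correct and uses the same two constructions as the paper, namely $\sfM \mapsto \bigcup_{x\in\sfM}\sigma(x)$ in one direction and $x \mapsto f(\thick(x))$ in the other, with join preservation established via Lemma~\ref{lem:finite} and the axioms (ii)--(iv) just as in the paper's argument. The one small improvement is that you explicitly verify the two round trips (and in particular the non-trivial fact that $y\in\thick(x)$ implies $\sigma(y)\subseteq\sigma(x)$), making the claimed bijection fully rigorous, whereas the paper leaves these composites implicit.
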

\begin{proof}
Suppose we are given a $\vee$-support datum $\sigma$ on $\sfK$ with values in $X$. We define a morphism $\phi\colon T(\sfK) \to \mcO(X)$ by setting
\[
\phi(\sfM) = \bigcup_{x\in \sfM} \sigma(x).
\]
This is clearly an open subset of $X$ and $\phi$ is order preserving by definition. Condition (i) that $\sigma(0)=\varnothing$ tells us that $\phi$ preserves the bottom element, i.e.\ the empty join. Suppose then that $\sfM_i$ for $i\in I$ is a family of thick subcategories with join $\sfM$. We need to show equality of
\[
U = \phi( \sfM) \text{ and } V = \cup_i \phi(\sfM_i).
\]
Since each $\sfM_i$ is contained in $\sfM$ it is immediate from the construction that $V\subseteq U$. On the other hand given $x\in \sfM$ we can find, by Lemma~\ref{lem:finite}, $x_1,\ldots,x_n \in \cup_i \sfM_i$ such that $x\in \thick(x_1,\ldots,x_n)$. It follows from axioms (ii)-(iv) that
\[
\sigma(x) \subseteq \sigma(x_1)\cup \cdots \cup \sigma(x_n) \subseteq V
\]
showing that $U\subseteq V$.

Given a morphism $\phi\colon T(\sfK) \to \mcO(X)$ in $\CjSLat$ we define
\[
\sigma(x) = \phi(\thick(x)).
\]
This satisfies (ii) by construction and (i),(iii), and (iv) follow from the fact that $\phi$ preserves joins.
\end{proof}

There is a natural $\vee$-support datum on $\sfK$ taking values in $\fSpcnt(\sfK)$ which is given by sending $x$ to the open $U_{\thick(x)}$ corresponding to $\thick(x)$, and this is also universal in the appropriate sense. Given a $\vee$-support datum $\sigma$ on $\sfK$ taking values in $X$ there is a corresponding morphism $\phi\colon T(\sfK) \to i\mcO(X)$ in $\CjSLat$. The map $\phi$ corresponds, via adjunction, to a frame map $\lefta T(\sfK) \to \mcO(X)$ and we can take points to obtain $X \to \fSpcnt(\sfK)$ such that $\sigma$ is obtained by pulling back the universal support datum. On the other hand, given $X \to \fSpcnt(\sfK)$ we can take this pullback, i.e.\ consider the composite
\[
T(\sfK) \to i\mcO(\fSpcnt(\sfK)) \to i\mcO(X)
\]
which is the join-preserving map associated to the corresponding $\vee$-support datum. 



\section{The partially functorial theory}\label{sec:pf}
\setcounter{subsection}{1}

As seen in Section~\ref{ssec:fail} if one works with all functors, and hence complete join semilattices, one cannot reconstruct the Balmer spectrum. In this section we develop a partially functorial theory based on $\CLat$ and $\lefter$ which addresses this issue by giving the correct result for categories where the lattice of thick subcategories is distributive.

We have already seen in Example~\ref{ex:letsnotmeet} that $T\colon \Tcat \to \CjSLat$ does not factor via $\CLat$, i.e.\ we may not get meet preserving maps out of $T$. The situation isn't improved in the case that the lattices involved are distributive.

\begin{ex}
Let $k$ be a field and consider the diagonal inclusion $f\colon k\to k\times k = R$ and the corresponding bounded derived categories. We know that all lattices of thick subcategories involved are coherent frames: the lattices are $\Sierp$ and $\Sierp \times \Sierp$ (i.e.\ the powerset of $\{0,1\}$) respectively. However, the functor $f_*\colon \sfD^\mathrm{perf}(R) \to \sfD^\mathrm{perf}(k)$ does not induce a map of frames, i.e.\ does not give a map in $\CLat$. 

Indeed, writing $R = R_1\times R_2$ we have
\begin{displaymath}
\thick(f_*(\thick(R_1)\cap \thick(R_2))) = \thick(f_*0) = 0
\end{displaymath}
but, on the other hand,
\begin{displaymath}
\thick(f_*R_1) \cap \thick(f_*R_2) = \sfD^\mathrm{perf}(k) \cap \sfD^\mathrm{perf}(k) = \sfD^\mathrm{perf}(k)
\end{displaymath}
exhibiting that $T(f_*)$ fails to preserve meets. 
\end{ex}

We see that even with distributivity hypotheses arbitrary exact functors may fail to induce maps in $\CLat$. So, if we wish to work in the category of complete lattices we must restrict the functors we allow.

\begin{defn}\label{defn:confluent}
Let $F\colon \sfK \to \sfL$ be a morphism in $\Tcat$, i.e.\ an exact functor between essentially small triangulated categories. We say that $F$ is a \emph{confluent functor} if $T(F)$ preserves finite meets, i.e.\ $F(\sfK)$ generates $\sfL$ and 
\begin{displaymath}
\thick(F\sfM)\cap \thick(F\sfN) = \thick(F(\sfM\cap \sfN)).
\end{displaymath}
for every pair of thick subcategories $\sfM$ and $\sfN$ of $\sfK$.
\end{defn}

\begin{rem}
The condition that $F(\sfK)$ generates $\sfL$ is forced by the fact that the empty meet, namely the top element, needs to be preserved by $T(F)$. The analogous constraint in tt-geometry is not serious, the tensor unit always generates as a tensor ideal, but here it is a restriction.
\end{rem}

\begin{rem}
We will sometimes say that an exact functor $F\colon \sfK \to \sfL$ is confluent with respect to some class $\mcS$ of thick subcategories of $\sfK$ (e.g.\ radical tensor ideals) if $T(F)$ preserves meets of members of $\mcS$.
\end{rem}

\begin{rem}
We note that the containment 
\begin{displaymath}
\thick(F\sfM)\cap \thick(F\sfN) \supseteq \thick(F(\sfM\cap \sfN)).
\end{displaymath}
always holds.
\end{rem}

\begin{ex}
Let $f\colon A\to B$ be a map of commutative rings. The induced functor $f^*\colon \sfD^\mathrm{perf}(A) \to \sfD^\mathrm{perf}(B)$ is confluent. The generation condition is clear as $f^*A \cong B$, and preservation of meets follows from the classification of thick subcategories via support, as in \cite{Thomclass}, and the formula, for $E \in \sfD^\mathrm{perf}(A)$, 
\[
\supp f^*E = \Spec(f)^{-1} \supp E
\]
(see Lemma~\ref{lem:ttconfluent} for details).
\end{ex}

\begin{ex}\label{ex:notok}
Localizations are not necessarily confluent. Consider the localization
\[
\pi\colon \sfD^\mathrm{b}(\PP^1) \to \sfD^\mathrm{b}(\AA^1)
\]
at $\thick(k(x))$ for some point $x\in \PP^1$. Then 
\[
\thick(\pi(\thick(\mcO) \cap \thick(\mcO(1)))) = 0 \text{ but } \thick(\pi\mcO) \cap \thick(\pi \mcO(1)) = \sfD^\mathrm{b}(\AA^1)
\]
showing that even passing to an open subset of the Balmer spectrum can behave poorly with respect to the whole lattice of thick subcategories.
\end{ex}


\begin{lem}\label{lem:subcat}
Let $\sfK \xrightarrow{F} \sfL \xrightarrow{G} \sfM$ be exact functors. Then:
\begin{itemize}
\item[(a)] if $F$ is an equivalence it is confluent;
\item[(b)] if $F$ and $G$ are confluent so is $GF$.
\end{itemize}
\end{lem}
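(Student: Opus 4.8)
The plan is to verify each statement directly from Definition~\ref{defn:confluent}, which characterises confluence in terms of the map $T(-)$ on lattices of thick subcategories preserving finite meets (equivalently: the image generates, and binary meets of thick subcategories are preserved).

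For part (a), suppose $F\colon\sfK\to\sfL$ is an equivalence. Then $F$ is essentially surjective, so $\thick(F\sfK)=\sfL$ and the top element is preserved. For the meet condition, note that an equivalence induces a lattice isomorphism $\Thick(\sfK)\xrightarrow{\sim}\Thick(\sfL)$: indeed $T(F)\sfM=\thick(F\sfM)$ is just the essential image of $\sfM$ under $F$, which is already thick since $F$ is an equivalence, and the inverse equivalence induces the inverse map. A lattice isomorphism preserves all meets and joins, so in particular $\thick(F\sfM)\cap\thick(F\sfN)=\thick(F(\sfM\cap\sfN))$, and $F$ is confluent.

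For part (b), the cleanest route is to observe that $T$ is a functor on the underlying posets (Lemma~\ref{lem:thickfunctor}), so $T(GF)=T(G)T(F)$ as poset maps. By hypothesis $T(F)$ and $T(G)$ each preserve finite meets; a composite of finite-meet-preserving poset maps preserves finite meets, hence so does $T(GF)$. Concretely, for thick subcategories $\sfM,\sfN\subseteq\sfK$ one computes
\begin{align*}
\thick(GF(\sfM\cap\sfN)) &= T(G)\bigl(T(F)(\sfM\cap\sfN)\bigr) \\
&= T(G)\bigl(T(F)\sfM\cap T(F)\sfN\bigr) \\
&= T(G)(T(F)\sfM)\cap T(G)(T(F)\sfN) \\
&= \thick(GF\sfM)\cap\thick(GF\sfN),
\end{align*}
using confluence of $F$ for the second equality and confluence of $G$ for the third (applied to the thick subcategories $T(F)\sfM$ and $T(F)\sfN$ of $\sfL$); here I am implicitly using Lemma~\ref{lem:commute} to identify $T(G)T(F)$ with $T(GF)$ on objects. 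The generation condition for $GF$ is similarly immediate: $\thick(GF\sfK)=T(G)(\thick(F\sfK))=T(G)(\sfL)=\sfM$ using generation for $F$ then for $G$.

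There is no real obstacle here; the only point requiring minor care is keeping track of the distinction between $\thick(GF\sfM)$ and $T(G)(T(F)\sfM)=\thick(G\thick(F\sfM))$, which are equal by Lemma~\ref{lem:commute}. Everything else is a formal consequence of functoriality of $T$ together with the observation that finite-meet-preserving maps compose.
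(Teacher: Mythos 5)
Your proof is correct and follows essentially the same route as the paper: both parts come down to the identity $\thick(G\thick(F\sfM))=\thick(GF\sfM)$ from Lemma~\ref{lem:commute}, with (a) resting on the observation that an equivalence carries thick subcategories to thick subcategories (your ``essential image'' phrasing handles the isomorphism-closure point the paper flags explicitly) and (b) being the same chain of equalities the paper writes out, just packaged as ``$T$ is functorial and meet-preserving maps compose.'' No gaps.
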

\begin{proof}
We begin with (a): if $F$ is an equivalence then, for thick subcategories $\sfN_1$ and $\sfN_2$ of $\sfK$, we have
\begin{align*}
\thick(F(\sfN_1)) \cap \thick(F(\sfN_2)) &= F(\sfN_1) \cap F(\sfN_2) \\
&= F(\sfN_1 \cap \sfN_2) \\
&= \thick(F(\sfN_1\cap \sfN_2)).
\end{align*}
Here the first and last equalities are the fact that $F$ is an equivalence and so sends thick subcategories to thick subcategories, and the middle equality is that $F$ is an equivalence and so reflects isomorphisms. There is a small lie here as strictly we still need to close under isomorphisms i.e.\ $F\sfN_1$ may fail to be closed under isomorphisms, but this can be accomplished without harm or we can assume $\sfK$ and $\sfL$ are skeletal without changing the associated lattices. It is clear that $F(\sfK)$ generates $\sfL$.

Now for (b), suppose that $F$ and $G$ are confluent. We know that $F(\sfK)$ generates $\sfL$, i.e.\ $\thick(F(\sfK)) = \sfL$. By Lemma~\ref{lem:commute} and confluence of $G$ we have
\begin{displaymath}
\sfM = \thick(G(\sfL)) = \thick(G\thick(F(\sfK))) = \thick(GF\sfK).
\end{displaymath}
Next let us turn to non-empty meets. We see that, for thick subcategories $\sfN_1$ and $\sfN_2$ of $\sfK$,
\begin{align*}
\thick(GF(\sfN_1)) \cap \thick(GF(\sfN_2)) &= \thick(G(\thick(F\sfN_1))) \cap \thick(G(\thick(F\sfN_2))) \\
&= \thick(G(\thick(F\sfN_1) \cap \thick(F\sfN_2))) \\
&= \thick(GF(\sfN_1 \cap \sfN_2))
\end{align*}
where the first equality is Lemma~\ref{lem:commute}, the second is that $G$ is confluent, and the third is that $F$ is confluent (and another application of Lemma~\ref{lem:commute}).
\end{proof}

\begin{defn}\label{defn:htcat}
By the Lemma, taking $\hTcat$ to be the collection of essentially small triangulated categories with confluent exact functors between them gives a subcategory of $\Tcat$ which contains all equivalences.
\end{defn}

By construction the functor $T\colon \hTcat \to \CjSLat$ factors via $T_\wedge \colon \hTcat \to \CLat$.

\begin{defn}\label{defn:spcnt}
We define the \emph{non-tensor spectrum} functor $\Spcnt\colon \hTcat^\op \to \Sob$ to be the composite
\begin{displaymath}
\hTcat^\op \xrightarrow{T_\wedge^\op} \CLat^\op \xrightarrow{\lefter^\op} \SFrm^\op \xrightarrow{\Spec} \Sob
\end{displaymath}
where $\lefter$ is the adjoint of Proposition~\ref{prop:adjoint2}.
\end{defn}

\begin{rem}
We note that $\Spcnt$, like $\fSpcnt$, can be applied to \emph{any} essentially small triangulated category; the only issue is functoriality.
\end{rem}

We can actually simplify the construction of $\Spcnt$, as we did for $\fSpcnt$. We have noted in Lemma~\ref{lem:sober} that for any complete lattice $L$ the space $\CLat(L,\Sierp)$ is sober. Hence
\begin{displaymath}
\Spec\lefter L = \Spec \mcO \CLat(L,\Sierp) \cong \CLat(L,\Sierp)
\end{displaymath}
by Stone duality. Thus $\Spcnt(\sfK) \cong \CLat(T(\sfK),\Sierp)$.

The non-tensor spectrum satisfies a universal property, which can be neatly expressed as follows.

\begin{lem}\label{lem:universal}
There is a natural isomorphism $\mcO\Spcnt \cong j\lefter$. Thus there is a natural transformation $T_\wedge \to \mcO\Spcnt$ exhibiting $\mcO\Spcnt$ as the spatial frame approximation $\lefter T_\wedge$ (up to taking the opposite functors) of $T_\wedge$ in $\CLat$.
\end{lem}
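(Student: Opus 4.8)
The plan is to unwind the definitions and package everything through Stone duality and the adjunction of Proposition~\ref{prop:adjoint2}. First I would observe that, by the simplification noted just above, $\Spcnt(\sfK) \cong \CLat(T(\sfK),\Sierp) = \Spec \lefter T_\wedge(\sfK)$, where $\lefter T_\wedge(\sfK)$ is a spatial frame. Applying $\mcO$ to this and invoking Stone duality, which says that $\mcO\Spec F \cong F$ naturally for a spatial frame $F$, yields $\mcO\Spcnt(\sfK) \cong \lefter T_\wedge(\sfK) = j\lefter T_\wedge(\sfK)$ (the $j$ being merely the inclusion $\SFrm \hookrightarrow \CLat$, which is fully faithful and so harmless notationally). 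The only subtlety is to carry this out functorially: $\Spcnt$ is by definition the composite $\Spec \circ \lefter^\op \circ T_\wedge^\op$, so $\mcO\Spcnt$ is $\mcO \circ \Spec \circ \lefter \circ T_\wedge$ (modulo op's), and the natural isomorphism $\mcO\Spec \cong \id_{\SFrm}$ supplied by the Stone duality equivalence gives the asserted natural isomorphism $\mcO\Spcnt \cong j\lefter \circ T_\wedge$ upon precomposing with $T_\wedge$ and postcomposing with $j$. This is essentially bookkeeping with the equivalence $\SFrm \cong \Sob^\op$ recalled in the excerpt.

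For the second sentence I would then produce the natural transformation $T_\wedge \to \mcO\Spcnt$ by composing the unit $\eta\colon \id_{\CLat} \to j\lefter$ of the adjunction $\lefter \dashv j$ (Proposition~\ref{prop:adjoint2}) with the identification just obtained: explicitly, for each $\sfK$ the map is the unit component $\eta_{T(\sfK)}\colon T(\sfK) \to j\lefter T(\sfK) \cong \mcO\Spcnt(\sfK)$, which sends a thick subcategory $\sfM$ to the open subset $U_\sfM = \{p\colon T(\sfK)\to\Sierp \mid p(\sfM)=1\}$ of $\Spcnt(\sfK) = \CLat(T(\sfK),\Sierp)$, as in Construction~\ref{cons:lefter}. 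Naturality in $\sfK$ (with respect to confluent functors) is exactly naturality of $\eta$ together with functoriality of the Stone-duality isomorphism. The phrase ``exhibiting $\mcO\Spcnt$ as the spatial frame approximation $\lefter T_\wedge$ of $T_\wedge$ in $\CLat$'' is then just the statement that this natural transformation is the unit of $\lefter \dashv j$, hence is the universal (initial) map from $T_\wedge(\sfK)$ to (the underlying object in $\CLat$ of) a spatial frame.

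There is essentially no hard part here: the content of the lemma is packaging, and every ingredient — the simplification $\Spec\lefter L \cong \CLat(L,\Sierp)$, Stone duality $\mcO\Spec \cong \id$, and the adjunction $\lefter \dashv j$ with its unit — has already been established. The only place where one must be slightly careful is the ubiquitous suppression of the op's (flagged by the authors already in Construction~\ref{cons:left}): one should check that the contravariant functors compose so that $\mcO\Spcnt$ really is covariant $\hTcat \to \SFrm$ and that the natural transformation goes in the direction claimed, i.e.\ $T_\wedge \to \mcO\Spcnt$ and not the reverse. I would state this explicitly but not belabour it. So the proof is short: apply the simplification, apply Stone duality to get $\mcO\Spcnt \cong j\lefter T_\wedge$, and identify the comparison map with the unit of $\lefter \dashv j$.
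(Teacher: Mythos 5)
Your proposal is correct and follows essentially the same route as the paper: the paper's proof is the one-line observation that $\mcO\Spcnt = \mcO\circ\Spec\circ\lefter^\op\circ T_\wedge^\op \cong \lefter^\op\circ T_\wedge^\op$ via Stone duality, with the transformation $T_\wedge \to \mcO\Spcnt$ then being the unit of $\lefter \dashv j$, exactly as you describe. Your additional explicit bookkeeping (the description of the unit as $\sfM \mapsto U_\sfM$ and the care with op's) is consistent with the paper and fills in what it leaves implicit.
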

\begin{proof}
The composite $\mcO \Spcnt$ is, by definition, 
\begin{displaymath}
\mcO \circ \Spec \circ\; \lefter^\op \circ T_\wedge^\op \cong \lefter^\op \circ T_\wedge^\op.
\end{displaymath}
\end{proof}

By Stone duality one can reinterpret this property in terms of continuous maps of spaces. However, despite being `obvious' i.e.\ given by intersections, the meets in $T(\sfK)$ are somewhat subtle: we simply do not know how to produce a generator for the intersection $\thick(x)\cap \thick(y)$, if it even has one! Without a tensor product there is no obvious operation on objects which allows us to define a map $T_\wedge(\sfK) \to F$ in $\CLat$, with $F$ a spatial frame, purely in terms of objects of $\sfK$ as in Section~\ref{ssec:universal} or tt-geometry.

By construction $\Spcnt$ doesn't lose any information in the case that $T(\sfK)$ is distributive.

\begin{thm}\label{thm:class}
Suppose that $\sfK$ is such that $T(\sfK)$ is distributive. Then the comparison map is a lattice isomorphism
\begin{displaymath}
T(\sfK) \xrightarrow{\sim} \mcO\Spcnt \sfK,
\end{displaymath}
i.e.\ thick subcategories of $\sfK$ are classified by $\Spcnt \sfK$. The inverse is given by sending an open subset $U$ to the thick subcategory
\begin{displaymath}
\bigcap_{\sfP\notin U} \sfP
\end{displaymath}
where we have identified $\Spcnt \sfK$ with the space of meet-prime thick subcategories. 
\end{thm}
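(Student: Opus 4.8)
The plan is to unwind the definition of $\Spcnt \sfK$ into an explicit description of both the underlying space and the comparison map, and then to read everything off Theorem~\ref{thm:spatial}; no genuinely new argument is required, since the essential work has already been done there.

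First I would invoke the simplification recorded after Definition~\ref{defn:spcnt}: by Lemma~\ref{lem:sober} and Stone duality one has $\Spcnt \sfK \cong \CLat(T(\sfK), \Sierp)$, and by the remark following Definition~\ref{defn:meetprime} the underlying set of this space is canonically the set of meet-prime elements of $\Thick(\sfK)$, i.e.\ the meet-prime thick subcategories. Since any point $p\colon \Thick(\sfK) \to \Sierp$ preserves finite meets and arbitrary joins, the basic opens $U_\sfM = \{p \mid p(\sfM) = 1\}$ satisfy $U_\sfM \cap U_\sfN = U_{\sfM \cap \sfN}$ and $\bigcup_\lambda U_{\sfM_\lambda} = U_{\bigvee_\lambda \sfM_\lambda}$ (and $U_0 = \varnothing$, $U_1$ is the whole space), so $\mcO \Spcnt \sfK$ is identified with $\{U_\sfM \mid \sfM \in \Thick(\sfK)\}$, where under the identification above $U_\sfM = \{\sfP \text{ meet-prime} \mid \sfM \nsubseteq \sfP\}$. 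Finally, by Lemma~\ref{lem:universal} together with Construction~\ref{cons:lefter} and Proposition~\ref{prop:adjoint2}, the comparison map $T(\sfK) \to \mcO \Spcnt \sfK$ is the unit of $\lefter \dashv j$, explicitly $\sfM \mapsto U_\sfM$, and it is a morphism in $\CLat$, hence preserves finite meets and arbitrary joins.

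Next I would check that $\sfM \mapsto U_\sfM$ is an isomorphism of posets. Surjectivity is immediate from the description of the opens just given. For the rest, the distributivity hypothesis lets us apply Theorem~\ref{thm:spatial}: if $\sfM \nsubseteq \sfN$, pick $s \in \sfM$ with $s \notin \sfN$ and then a meet-prime $\sfP$ with $\sfN \subseteq \sfP$ and $s \notin \sfP$; since $s \in \sfM$ we have $\sfM \nsubseteq \sfP$, so $\sfP \in U_\sfM \setminus U_\sfN$. This single observation yields both injectivity (if $\sfM \neq \sfN$ then $U_\sfM \neq U_\sfN$) and reflection of the order (if $U_\sfM \subseteq U_\sfN$ then $\sfM \subseteq \sfN$). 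A bijective, order-preserving and order-reflecting map between complete lattices is an isomorphism of complete lattices, so the comparison map is a lattice isomorphism $T(\sfK) \xrightarrow{\sim} \mcO \Spcnt \sfK$; this is exactly the asserted classification.

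For the explicit inverse, note that for fixed $\sfM$ one has $\sfP \notin U_\sfM$ precisely when $\sfM \subseteq \sfP$, so
\[
\bigcap_{\sfP \notin U_\sfM} \sfP = \bigcap \{\sfP \text{ meet-prime} \mid \sfM \subseteq \sfP\}.
\]
The containment $\sfM \subseteq \bigcap \{\sfP \mid \sfM \subseteq \sfP\}$ is trivial, and the reverse is once more Theorem~\ref{thm:spatial}: any $s \notin \sfM$ is excluded by some meet-prime $\sfP \supseteq \sfM$, so the intersection contains no object outside $\sfM$. Hence $\bigcap_{\sfP \notin U_\sfM} \sfP = \sfM$, which identifies the stated formula with the inverse of the comparison map. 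As for the main obstacle: there is none at the level of this statement, because the crucial input—the existence of enough meet-prime thick subcategories under distributivity—is exactly Theorem~\ref{thm:spatial}. The only points needing care here are bookkeeping: confirming that the opens of $\CLat(\Thick(\sfK), \Sierp)$ are precisely the $U_\sfM$ (which is where meet-preservation of points is used), that the comparison map of Lemma~\ref{lem:universal} is literally $\sfM \mapsto U_\sfM$, and the standard fact that an order isomorphism of complete lattices automatically preserves all meets and joins.
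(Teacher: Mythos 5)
Your proof is correct and follows essentially the same approach as the paper: the key input is that distributivity implies spatiality via Theorem~\ref{thm:spatial} (equivalently Corollary~\ref{cor:spatial}), after which the isomorphism is forced by Stone duality. The paper simply cites Corollary~\ref{cor:spatial} and the abstract fact that $\lefter$ fixes spatial frames (Lemma~\ref{lem:universal}), dismissing the inverse formula as standard, whereas you unwind the adjunction and verify bijectivity and the inverse directly at the level of meet-primes\textemdash{}a more explicit rendering of the same argument.
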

\begin{proof}
By Corollary~\ref{cor:spatial} the lattice $T(\sfK)$ is a spatial frame. Hence $\lefter T(\sfK)\cong T(\sfK)$, and so the claimed lattice isomorphism is immediate from Lemma~\ref{lem:universal}. The explicit description of the inverse is a standard fact, and can easily be verified directly.
\end{proof}

\begin{rem}
The classification mirrors the one from tt-geometry, cf.\ \cite{BaSpec}*{Lemma~4.8} in particular for a similar description of the inverse.
\end{rem}


\subsection{The comparison with tt-geometry}

We now discuss the relation to tt-geometry in the case of tt-categories. Given a tt-category $\sfT$ we denote by $T^\otimes(\sfT)$ the lattice of radical thick $\otimes$-ideals, and for a tt-functor $F\colon \sfT\to \sfS$ we denote by $T^\otimes(F)$ the corresponding map $T^\otimes(\sfT)\to T^\otimes(\sfS)$ sending $\sfJ$ to the smallest radical thick $\otimes$-ideal containing $F\sfJ$. As usual $\Spc \sfT$ denotes the Balmer spectrum i.e.\ the space $(\Spec T^\otimes(\sfK))^\vee$. We know $T^\otimes(\sfT)$ is a coherent frame. Moreover, $T^\otimes(F)$ is a map of frames preserving compact objects;  we give a proof of the first assertion in the next lemma.

\begin{lem}\label{lem:ttconfluent}
Let $\sfT$ and $\sfS$ be tt-categories and $F\colon \sfT \to \sfS$ be a tt-functor. Then $F$ is confluent with respect to radical thick $\otimes$-ideals, i.e.\ $T^\otimes(F)$ preserves finite meets.
\end{lem}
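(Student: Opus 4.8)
The plan is to reduce the statement to the known tt-geometric classification of radical thick $\otimes$-ideals via supports. Recall that, by Balmer's theorem, $T^\otimes(\sfT)$ is isomorphic to the frame of Thomason subsets of $\Spc \sfT$, with a radical thick $\otimes$-ideal $\sfJ$ corresponding to its support $\Supp \sfJ = \bigcup_{x\in \sfJ}\supp x$, and that intersection of ideals corresponds to intersection of the associated Thomason subsets. Under this identification, the map $T^\otimes(F)$ becomes the map on Thomason subsets induced by pulling back along the continuous map $\Spc F\colon \Spc\sfS \to \Spc\sfT$; indeed for $x\in \sfT$ one has the fundamental identity $\supp Fx = (\Spc F)^{-1}\supp x$, which is \cite{BaSpec}*{Proposition~3.6}. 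So the first step is to record this identity and, from it, deduce that for a radical thick $\otimes$-ideal $\sfJ$ of $\sfT$ we have $\Supp\, T^\otimes(F)(\sfJ) = (\Spc F)^{-1}\Supp \sfJ$: the containment $\supseteq$ is immediate from the identity applied objectwise, and $\subseteq$ follows since $T^\otimes(F)(\sfJ)$ is the radical $\otimes$-ideal generated by $F\sfJ$, whose support is the Thomason closure of $\bigcup_{x\in\sfJ}\supp Fx = (\Spc F)^{-1}\Supp\sfJ$, which is already Thomason (preimages of Thomason subsets under spectral maps are Thomason).

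With this in hand, the second step is the formal observation that $U \mapsto (\Spc F)^{-1}U$ preserves finite intersections of subsets — trivially true for preimages — and hence $T^\otimes(F)$ preserves finite meets in $T^\otimes(\sfT)$. For the empty meet, i.e. the top element, we need $T^\otimes(F)(\sfT) = \sfS$: but $T^\otimes(F)(\sfT)$ is the radical thick $\otimes$-ideal generated by $F(\sfT)$, which contains $F\unit \cong \unit_\sfS$ (as $F$ is a tt-functor), and $\unit_\sfS$ generates $\sfS$ as a $\otimes$-ideal; hence the top is preserved. Together with the binary meet case this gives that $T^\otimes(F)$ preserves all finite meets, which is exactly the assertion that $F$ is confluent with respect to radical thick $\otimes$-ideals.

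The only mild subtlety — and the one point I would be careful about — is making sure the support-theoretic formula is applied at the level of the whole ideal rather than just single objects, and that the resulting set is genuinely Thomason so that it corresponds to an honest element of $T^\otimes(\sfS)$ under Balmer's classification; this is what forces the small argument that $(\Spc F)^{-1}$ of a Thomason subset is Thomason (a spectral map pulls back quasi-compact opens to quasi-compact opens, hence Thomason subsets to Thomason subsets). Everything else is a direct translation through the classification theorem. I would phrase the proof compactly: cite \cite{BaSpec} for the classification and for $\supp Fx = (\Spc F)^{-1}\supp x$, deduce $\Supp T^\otimes(F)(\sfJ) = (\Spc F)^{-1}\Supp\sfJ$, and conclude that meets are preserved because preimage commutes with intersection and $\unit$ generates.
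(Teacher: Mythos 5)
Your proposal is correct and follows essentially the same route as the paper: invoke Balmer's classification of radical thick $\otimes$-ideals by Thomason subsets of $\Spc\sfT$ together with the formula $\supp Fx = \Spc(F)^{-1}\supp x$ from \cite{BaSpec}, and conclude since preimages commute with intersections and the unit generates the top. Your extra care that $\Spc(F)^{-1}$ of a Thomason subset is Thomason is a reasonable point to spell out, but it is subsumed in the paper's direct use of the classification.
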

\begin{proof}
Suppose that $\sfI$ and $\sfJ$ are radical $\otimes$-ideals of $\sfT$. Then, by \cite{BaSpec}*{Theorem~4.10}, we have $\sfI = \sfT_V$ and $\sfJ = \sfT_W$, the ideals of objects supported on $V$ and $W$ respectively, where $V$ and $W$ are Thomason subsets of $\Spc \sfT$. By the classification and \cite{BaSpec}*{Proposition~3.6} we have
\begin{displaymath}
T^\otimes(F)\sfI = \sfS_{\supp F\sfI} = \sfS_{\Spc(F)^{-1}V} \text{ and } T^\otimes(F)\sfJ = \sfS_{\supp F\sfJ} = \sfS_{\Spc(F)^{-1}W}.
\end{displaymath}
Thus we see, using that $\sfI\cap\sfJ = \sfT_{V\cap W}$, 
\begin{displaymath}
T^\otimes(F)\sfI \cap T^\otimes(F)\sfJ = \sfS_{\Spc(F)^{-1}V \cap \Spc(F)^{-1}W} = \sfS_{\Spc(F)^{-1}(V\cap W)} = T^\otimes(F)(\sfI\cap \sfJ).
\end{displaymath}
It is clear that $T^\otimes(F)(\sfT) = \sfS$.
\end{proof}

Thus we get a functor $T^\otimes\colon \ttcat \to \CohFrm$, from the category of tt-categories and tt-functors to the category of coherent frames with maps preserving joins, finite meets, and compacts, which gives Balmer's theory of tt-geometry by composing with Stone duality (up to some pesky $\op$'s which we suppress, see Theorem~\ref{thm:comparisonB} for a precise statement).

We do \emph{not} get a functor $T\colon \ttcat \to \CLat$ by sending $\sfT$ to its lattice of thick subcategories; a tt-functor may not respect meets of general thick subcategories, cf.\ Example~\ref{ex:notok}.

However, we are able to produce a natural transformation between the restrictions of $T$ and $T^\otimes$ on a subcategory of $\ttcat$.

\begin{lem}\label{lem:Bcomparison}
Let $\sfT$ be a rigid tt-category. Then $T^\otimes(\sfT)$ is a bounded sublattice of $T(\sfT)$.
\end{lem}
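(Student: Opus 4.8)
The plan is to show two things: first, that in a rigid tt-category every radical thick $\otimes$-ideal is in fact already a thick subcategory (which is immediate from the definitions), so that $T^\otimes(\sfT)$ is literally a subposet of $T(\sfT)$; and second, that this inclusion is a bounded sublattice inclusion, i.e.\ it preserves finite meets and joins as well as $0$ and $1$. The top element $\sfT$ and the bottom element $0$ of $T^\otimes(\sfT)$ are visibly the top and bottom of $T(\sfT)$, so the content is in the meets and joins.

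For meets this is easy: the meet in either lattice is intersection, and an intersection of radical thick $\otimes$-ideals is again a radical thick $\otimes$-ideal, so the inclusion preserves finite (indeed arbitrary) meets. The subtle point is joins: a priori the join of two radical thick $\otimes$-ideals $\sfI, \sfJ$ taken in $T^\otimes(\sfT)$ is $\sqrt{\thick^\otimes(\sfI \cup \sfJ)}$, the radical of the smallest thick $\otimes$-ideal containing both, whereas the join in $T(\sfT)$ is merely $\thick(\sfI \cup \sfJ)$, with no ideal closure and no radical. So the heart of the lemma is the claim that for radical thick $\otimes$-ideals $\sfI$ and $\sfJ$ in a rigid tt-category one has
\[
\thick(\sfI \cup \sfJ) = \sqrt{\thick^\otimes(\sfI \cup \sfJ)}.
\]
The inclusion $\subseteq$ is clear. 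For $\supseteq$, I would argue as follows. Rigidity means $\sfT = \sfT^{\mathrm{rig}}$, so (by the analogue of Lemma~\ref{lem:rigid}, or rather by rigidity being inherited) for any object $x$ and any thick subcategory $\sfM$ one has that $\thick(\sfM)$ is automatically a $\otimes$-ideal in the rigid setting precisely when $\sfM$ is; but more to the point, in a rigid tt-category every thick subcategory is radical and the join of two $\otimes$-ideals is again a $\otimes$-ideal. Concretely: the class $\{z \in \sfT \mid z \otimes w \in \thick(\sfI \cup \sfJ) \text{ for all } w\}$ is a thick subcategory containing $\sfI \cup \sfJ$ (using that $\sfI, \sfJ$ are already $\otimes$-ideals and the usual thick-subcategory-detection argument), hence contains $\thick(\sfI \cup \sfJ)$; since $\unit$ generates the rigid objects $\sfT = \sfT^{\mathrm{rig}}$, taking $z \in \thick(\sfI\cup\sfJ)$ and $w$ arbitrary rigid gives that $\thick(\sfI\cup\sfJ)$ is a $\otimes$-ideal. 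For radicality, one uses that in a rigid category $x \otimes x$ generates $x$ as a thick $\otimes$-ideal (the standard fact that $\Spc\sfT$ has no nonempty "thickening" obstruction, cf.\ \cite{BaSpec}), so any $\otimes$-ideal is radical. Hence $\thick(\sfI\cup\sfJ)$ is a radical thick $\otimes$-ideal containing $\sfI \cup \sfJ$, so it contains $\sqrt{\thick^\otimes(\sfI\cup\sfJ)}$, giving the reverse inclusion and the equality of joins.

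The main obstacle I anticipate is pinning down the precise rigidity input that forces thick subcategories generated by $\otimes$-ideals to remain $\otimes$-ideals and to be radical — this is "well known" (it is why $\Thick^\otimes$ behaves well for rigid categories and underlies Lemma~\ref{lem:unit}) but assembling it cleanly requires the detection-subcategory trick together with the fact that $\unit$ generates $\sfT^{\mathrm{rig}} = \sfT$ and the square-generates-object fact. Once that is in hand, preservation of $0$, $1$, finite meets, and finite joins is routine, and the inclusion $T^\otimes(\sfT) \hookrightarrow T(\sfT)$ is a bounded sublattice as claimed. (I would also remark that this is exactly what is needed to promote the comparison of the previous lemmas to a natural transformation $T^\otimes \Rightarrow T$ on the relevant subcategory.)
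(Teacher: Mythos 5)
Your proof is correct and takes essentially the same route as the paper: meets are intersections of radical $\otimes$-ideals, and the join in $T(\sfT)$, namely $\thick(\sfI\cup\sfJ)$, is shown to already be a $\otimes$-ideal (your detection-subcategory argument is exactly the standard fact the paper cites) and then radical because rigidity forces every thick $\otimes$-ideal to be radical, so the two joins coincide. Two small cautions: your parenthetical that $\unit$ generates $\sfT^{\mathrm{rig}}$ is false in general (Lemma~\ref{lem:rigid} only gives $\thick(\unit)\subseteq\sfT^{\mathrm{rig}}$) and is not needed since your detection argument already quantifies over all $w$; and the agreement of bottom elements is not quite ``visible''\textemdash{}it is precisely where rigidity enters (so that $0$ is radical)\textemdash{}though your observation that every thick $\otimes$-ideal is radical covers this.
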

\begin{proof}
Both are collections of subcategories ordered by inclusion, so $T^\otimes(\sfT)$ is a subposet of $T(\sfT)$. Clearly the maximal elements coincide and so do the minimal ones\textemdash{}the key point is that rigidity of $\sfT$ implies that every $\otimes$-ideal is radical and so the zero ideal is radical.

It remains to show that $T^\otimes(\sfT)$ is closed under meets and joins in $T(\sfT)$. The former is clear: the intersection of radical $\otimes$-ideals is again a radical $\otimes$-ideal, and so $T^\otimes(\sfT)$ is closed under arbitrary meets (and this doesn't require rigidity).

For the latter, let $\sfJ_i$ for $i\in I$ be a collection of radical $\otimes$-ideals and consider
\begin{displaymath}
\sfX = \bigcup_{i\in I}\sfJ_i \text{ and } \sfJ = \thick(\sfX),
\end{displaymath}
where $\sfJ$ is the join of the $\sfJ_i$ in $T(\sfT)$. We observe that $\sfX$ is closed under the tensor product, and so $\sfJ$ is a $\otimes$-ideal (see for instance \cite{StevensonActions}*{Lemma~3.8}). As $\sfT$ is rigid $\sfJ$ is automatically radical, and so $\sfJ\in T^\otimes(\sfT)$. Therefore $\sfJ$ must be the join in $T^\otimes(\sfT)$ of the $\sfJ_i$ as required.
\end{proof}

\begin{rem}
The lemma does not hold in complete generality: $\otimes$-nilpotent objects will result in the minimal element $0$ of $T(\sfT)$ failing to be a radical $\otimes$-ideal, and so the inclusion will not preserve the empty join.
\end{rem}

As Example~\ref{ex:notok} illustrates the correct setting in which to compare $T$ and $T^\otimes$ is a delicate issue, at least if we want a natural transformation rather than just an objectwise comparison. The following is probably not optimal, but is sufficient to illustrate the situation. We denote by $\rttcat_\wedge^{\mathrm{loc}}$ the category of rigid tt-categories with exact monoidal localizations which are confluent, e.g.\ the monoidal equivalences.

\begin{lem}\label{lem:naturalB}
The inclusions of Lemma~\ref{lem:Bcomparison} assemble to give a natural transformation $T^\otimes \to T_\wedge$ of functors $\rttcat_\wedge^{\mathrm{loc}} \to \CLat$. 
\end{lem}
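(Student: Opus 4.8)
The plan is to take the inclusions $\iota_\sfT\colon T^\otimes(\sfT)\hookrightarrow T_\wedge(\sfT)$ of Lemma~\ref{lem:Bcomparison} as the components of the transformation and to verify the two things this requires: that each $\iota_\sfT$ is a morphism in $\CLat$, and that the naturality squares commute. The first is read off directly from Lemma~\ref{lem:Bcomparison}: it says $T^\otimes(\sfT)$ is a \emph{bounded} sublattice of $T(\sfT)$ closed under arbitrary meets and joins, so the inclusion preserves top, bottom, finite meets and arbitrary joins, which is exactly what it means to be a map in $\CLat$. (It is worth recording here that on $\rttcat_\wedge^{\mathrm{loc}}$ both functors do land in $\CLat$: for $T^\otimes$ this is Lemma~\ref{lem:ttconfluent}, and for $T_\wedge$ it is the very definition of a confluent functor.)

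For naturality, fix a morphism $F\colon \sfT\to\sfS$ of $\rttcat_\wedge^{\mathrm{loc}}$ and a radical thick $\otimes$-ideal $\sfJ$ of $\sfT$. Chasing $\sfJ$ around the square, commutativity amounts to the identity
\[
\thick_\sfS(F\sfJ)=T^\otimes(F)(\sfJ)
\]
of thick subcategories of $\sfS$, the right-hand side being by definition the smallest radical thick $\otimes$-ideal containing $F\sfJ$. The inclusion $\thick_\sfS(F\sfJ)\subseteq T^\otimes(F)(\sfJ)$ is automatic, since $T^\otimes(F)(\sfJ)$ is in particular a thick subcategory containing $F\sfJ$. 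For the reverse inclusion it is enough to check that $\thick_\sfS(F\sfJ)$ is \emph{already} a radical thick $\otimes$-ideal; radicality is free because $\sfS$ is rigid, so the only point is that it is a $\otimes$-ideal. This is where the hypotheses on $F$ enter. Monoidality of $F$ gives, for $j\in\sfJ$ and $x\in\sfT$, that $Fj\otimes Fx\cong F(j\otimes x)$ lies in $F\sfJ$ because $\sfJ$ is a $\otimes$-ideal; since $F$ is a localization, and so essentially surjective, this says $F\sfJ$ is closed under tensoring with all objects of $\sfS$, and then \cite{StevensonActions}*{Lemma~3.8} (the statement already used in the proof of Lemma~\ref{lem:Bcomparison}) shows $\thick_\sfS(F\sfJ)$ is a thick $\otimes$-ideal. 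If one prefers to avoid essential surjectivity, the generation clause of confluence suffices: the thick subcategory $\{s\in\sfS\mid Fj\otimes s\in\thick_\sfS(F\sfJ)\}$ contains $F\sfT$ and hence is all of $\sfS$, and then $\{t\in\sfS\mid t\otimes s\in\thick_\sfS(F\sfJ)\text{ for all }s\in\sfS\}$ is thick and contains $F\sfJ$, hence contains $\thick_\sfS(F\sfJ)$, i.e.\ the latter is a $\otimes$-ideal.

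This yields the required equality, and hence commutativity of
\[
\xymatrix{
T^\otimes(\sfT)\ar[r]^-{\iota_\sfT}\ar[d]_-{T^\otimes(F)} & T_\wedge(\sfT)\ar[d]^-{T_\wedge(F)}\\
T^\otimes(\sfS)\ar[r]^-{\iota_\sfS} & T_\wedge(\sfS).
}
\]
There is essentially no obstacle beyond this one step: the content, and the reason the statement is confined to $\rttcat_\wedge^{\mathrm{loc}}$ rather than to all tt-functors between rigid tt-categories, is precisely the claim that $\thick_\sfS(F\sfJ)$ is a $\otimes$-ideal. When the image of $F$ fails to generate, $\thick_\sfS(F\sfJ)$ need not be a $\otimes$-ideal — Example~\ref{ex:notok} exhibits exactly this failure — and then the two composites genuinely differ; everything else in the argument is formal.
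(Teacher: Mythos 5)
Your proof is correct and follows essentially the same route as the paper: the components are the inclusions of Lemma~\ref{lem:Bcomparison}, and naturality reduces to showing that $\thick_\sfS(F\sfJ)$ is already a (automatically radical, by rigidity) $\otimes$-ideal, which the paper likewise deduces from essential surjectivity of the localization; your generation-only variant is a nice bonus showing that the localization hypothesis is not really needed for this particular step. One small correction to your closing remark: the functor in Example~\ref{ex:notok} is a localization, hence essentially surjective, so it does not exhibit a failure of $\thick_\sfS(F\sfJ)$ to be an ideal when the image fails to generate\textemdash{}rather it shows that a monoidal localization need not be confluent, i.e.\ that $T_\wedge$ is simply not defined on all such functors, which is the actual reason the statement is restricted to $\rttcat_\wedge^{\mathrm{loc}}$.
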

\begin{proof}
Suppose that $F\colon \sfT\to \sfS$ is a confluent monoidal localization and consider the corresponding square
\begin{displaymath}
\xymatrix{
T^\otimes(\sfT) \ar[r] \ar[d] & T^\otimes(\sfS) \ar[d] \\
T(\sfT) \ar[r] & T(\sfS)
}
\end{displaymath}
Let $\sfJ$ be a thick $\otimes$-ideal of $\sfT$. Then, because $F$ is essentially surjective, $F\sfJ$ is closed under the tensor product in $\sfS$ and so $\thick(F\sfJ)$ is a $\otimes$-ideal of $\sfS$, and hence the smallest $\otimes$-ideal of $\sfS$ containing $F\sfJ$. This shows
\begin{displaymath}
T^\otimes(F)(\sfJ) = \thick^\otimes(F\sfJ) = \thick(F\sfJ) = T(F)(\sfJ)
\end{displaymath}
proving the square commutes.
\end{proof}  

Let $X$ be a spectral space. Recall from Definition~\ref{defn:Hochsterdual} the Hochster dual of $X$, which we denote by $X^\vee$. 

\begin{thm}\label{thm:comparisonB}
There is a canonical natural transformation $\varphi\colon \Spcnt \to \Spc^\vee$ of the functors $(\rttcat_\wedge^{\mathrm{loc}})^\op \to \Sob$, i.e. for every rigid tt-category $\sfT$ there is a canonical comparison map 
\begin{displaymath}
\varphi_\sfT \colon \Spcnt\sfT \to (\Spc \sfT)^\vee
\end{displaymath}
which is natural with respect to confluent monoidal localizations. 
\end{thm}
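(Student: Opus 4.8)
The plan is to realize $\varphi$ as the image, under $\Spec\circ\,\lefter^{\op}\colon\CLat^{\op}\to\Sob$, of the natural transformation $T^\otimes\to T_\wedge$ constructed in Lemma~\ref{lem:naturalB}, and then to identify the two resulting functors with $\Spcnt$ and with $\sfT\mapsto(\Spc\sfT)^\vee$. Essentially all the substantive work has been done in the preceding lemmas, so the argument will be formal; the real content is in the bookkeeping of the identifications.

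First I would pin down $\varphi_\sfT$ on objects. For a rigid tt-category $\sfT$, Lemma~\ref{lem:Bcomparison} gives a bounded sublattice inclusion $\iota_\sfT\colon T^\otimes(\sfT)\hookrightarrow T(\sfT)$; since it preserves arbitrary joins, arbitrary meets, and the top and bottom elements, it is a morphism in $\CLat$, and $T^\otimes(\sfT)$ is a coherent, hence spatial, frame. Applying $\lefter$ and then Stone duality $\Spec$ yields a continuous map $\Spcnt\sfT = \Spec\lefter T(\sfT)\to\Spec\lefter T^\otimes(\sfT)$. To identify the target, use that $T^\otimes(\sfT)$ is spatial, so $\lefter T^\otimes(\sfT)\cong T^\otimes(\sfT)$ by the triangle identities of Proposition~\ref{prop:adjoint2}, whence $\Spec\lefter T^\otimes(\sfT)\cong\Spec T^\otimes(\sfT) = (\Spc\sfT)^\vee$ — the last equality because $\Spc\sfT = (\Spec T^\otimes(\sfT))^\vee$ by definition and Hochster duality is an involution on spectral spaces. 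This defines $\varphi_\sfT$, and unwinding through the identification $\Spcnt\sfT\cong\CLat(T(\sfT),\Sierp)$ and the bijection between points and meet-primes shows that $\varphi_\sfT$ sends a meet-prime thick subcategory $\sfP$ to $\bigvee\{\sfJ\in T^\otimes(\sfT)\mid\sfJ\subseteq\sfP\}$, i.e.\ to the largest thick $\otimes$-ideal contained in $\sfP$; a brief check that this is a prime thick $\otimes$-ideal, and that the Balmer topology on $\Spc\sfT$ matches the point space of the coherent frame $T^\otimes(\sfT)$, confirms that $\varphi_\sfT$ really lands in $(\Spc\sfT)^\vee$.

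For naturality, given a confluent monoidal localization $F\colon\sfT\to\sfS$ of rigid tt-categories, Lemma~\ref{lem:naturalB} says precisely that $\iota_\sfS\circ T^\otimes(F) = T_\wedge(F)\circ\iota_\sfT$ in $\CLat$. Applying the contravariant functor $\Spec\circ\,\lefter$ to this identity produces the naturality square for $\varphi$, with vertical arrows $\Spcnt(F)$ — which, by Definition~\ref{defn:spcnt} and the simplification $\Spec\lefter L\cong\CLat(L,\Sierp)$, is exactly $\Spec\lefter T_\wedge(F)$ — and $(\Spc F)^\vee = \Spec T^\otimes(F)$. Hence $\varphi\colon\Spcnt\to\Spc^\vee$ is a natural transformation of functors $(\rttcat_\wedge^{\mathrm{loc}})^\op\to\Sob$ as claimed.

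I expect the main obstacle to be purely organizational rather than mathematical: keeping the variances straight (both $\Spcnt$ and $\Spc$ are contravariant in the tt-category, whereas the Hochster dual is a \emph{co}variant endofunctor on spectral maps), and checking that the several routine identifications — $\lefter F\cong F$ for spatial $F$, $X^{\vee\vee}\cong X$ for spectral $X$, and the comparison of the Balmer topology with the point space of $T^\otimes(\sfT)$ — are genuinely natural, so that they assemble into the natural isomorphism $\Spec\circ\,\lefter\circ(-)^{\op}\circ T^\otimes\cong\Spc^\vee$ needed to present $\varphi$ in the stated form.
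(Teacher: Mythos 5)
Your proposal is correct and follows essentially the same route as the paper: apply the point-space functor $\Spec$ (equivalently $\Spec\circ\lefter$) to the natural transformation $T^\otimes \to T_\wedge$ of Lemma~\ref{lem:naturalB} and identify the target $\Spec T^\otimes(-)$ with $(\Spc(-))^\vee$ via Hochster duality, the paper's ``subtlety'' about reversed closures being exactly the identification you make using $\Spc\sfT = (\Spec T^\otimes(\sfT))^\vee$ and involutivity of the dual. Your extra pointwise description of $\varphi_\sfT$ (sending a meet-prime $\sfP$ to the largest $\otimes$-ideal contained in it) is a correct embellishment the paper omits.
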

\begin{proof}
The natural transformation of Lemma~\ref{lem:naturalB} can be postcomposed with $\Spec$ to yield a natural transformation
\begin{displaymath}
\Spcnt \to \Spec(T^\otimes(-)).
\end{displaymath}
The subtlety is that $\Spec(T^\otimes(-))$ is not quite $\Spc$. Identifying $\Spec(T^\otimes(-))$ with the set of prime $\otimes$-ideals of $\sfT$ one checks that the closure of $\sfP$ is $Z_\sfP = \{\sfQ\mid \sfP\subseteq \sfQ\}$, rather than the Balmer-closed subset $\{\sfQ\mid \sfQ\subseteq \sfP\}$. This is rectified by reversing the order, i.e.\ taking the Hochster dual.
\end{proof}

\begin{rem}
To give another perspective on the appearence of Hochster duality in the theorem, let us note that one recovers $T^\otimes$ from $\Spc$ via the lattice of Thomason subsets whereas via Stone duality we would recover $T^\otimes$ via the lattice of open subsets. The Thomason subsets of $\Spc\sfT$ are precisely the open subsets of $\Spc\sfT^\vee$.
\end{rem}

\begin{rem}
In practice it is convenient to have $\Spc\sfD^\mathrm{perf}(R)\cong \Spec R$ and so we would like access to ``$\varphi^\vee\colon \Spcnt^\vee \to \Spc$''. However, as $\Spcnt$ need not, at least \emph{a priori}, be spectral we do not necessarily have access to Hochster duality for the non-tensor spectrum.
\end{rem}

We defer the discussion of examples to Section~\ref{sec:ex}.


\subsection{The comparison with Matsui's work}

There are other theories on the market which should be compared to our offering. We begin by discussing the work of Matsui \cite{matsui2019}. Matsui introduces a notion of support theory, based on the work of Balmer \cite{BaSpec}, and shows it classifies thick subcategories under suitable noetherian, sobriety, and distributivity hypotheses (the last of which is left implicit). As we now show the non-tensor spectrum generalizes Matsui's machinery under strong (in a vacuum) but standard hypotheses.

Let us begin by reinterpreting part of the setup of \cite{matsui2019}. Fix some triangulated category $\sfK$. Let $\MSpec \sfK$ ($M$ is for Matsui) denote the collection of finitely generated join-prime elements of $T(\sfK)$, and define for each $x\in \sfK$ a basic closed subset
\begin{displaymath}
Z_x = \{\sfJ \in \MSpec \sfK \mid \sfJ \subseteq \thick(x)\},
\end{displaymath}
and topologize $\MSpec \sfK$ by letting these determine the closed subsets. 

In general we have the following statement, expressing Matsui's construction as dual to ours when join-prime thick subcategories are under control.

\begin{prop}\label{prop:matsui}
Suppose that every join-prime thick subcategory of $\sfK$ is finitely generated. Then
\begin{displaymath}
\MSpec \sfK = \Spec (\lefter (T(\sfK)^\op)).
\end{displaymath}
\end{prop}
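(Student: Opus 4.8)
The plan is to unwind both sides as sets of points of a space and match up the topologies. First I would use the simplification for $\Spcnt$: by Stone duality, $\Spec(\lefter(T(\sfK)^\op)) \cong \CLat(T(\sfK)^\op, \Sierp)$, which by the remark following Definition~\ref{defn:meetprime} is in canonical bijection with the set of meet-prime elements of $T(\sfK)^\op$. An element is meet-prime in $T(\sfK)^\op$ precisely when it is \emph{join}-prime in $T(\sfK)$, so the points of $\Spec(\lefter(T(\sfK)^\op))$ are exactly the join-prime thick subcategories of $\sfK$. On the other hand $\MSpec\sfK$ is by definition the set of finitely generated join-prime thick subcategories. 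So the hypothesis — every join-prime thick subcategory is finitely generated — is precisely what makes the two underlying sets coincide.

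Next I would check the topologies agree. On the $\MSpec$ side the closed sets are generated by the basic closed subsets $Z_x = \{\sfJ \in \MSpec\sfK \mid \sfJ \subseteq \thick(x)\}$ for $x\in \sfK$. On the $\Spec(\lefter(T(\sfK)^\op))$ side, the closed subsets are (by Lemma~\ref{lem:sober} and its proof, applied to the lattice $L = T(\sfK)^\op$) exactly the sets $Z_l = \{\mfp \text{ meet-prime in } L \mid l \leq_L \mfp\}$ for $l \in L = T(\sfK)^\op$; translating the order on $L^\op = T(\sfK)$ back, $l \leq_L \mfp$ means $\mfp \subseteq_{T(\sfK)} l$, i.e.\ the join-prime thick subcategory $\mfp$ is contained in the thick subcategory $l$. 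Thus the closed subsets of $\Spec(\lefter(T(\sfK)^\op))$ are the sets $\{\sfJ \text{ join-prime} \mid \sfJ \subseteq \sfL\}$, one for each $\sfL \in T(\sfK)$. It remains to see these generate the same topology as the $Z_x$. One inclusion is immediate since $\thick(x) \in T(\sfK)$, so every $Z_x$ is among the $\Spcnt$-closed sets. For the converse, given any $\sfL \in T(\sfK)$, a join-prime $\sfJ$ is contained in $\sfL$ iff $\sfJ \subseteq \thick(x)$ for every $x\in \sfL$ — here the nontrivial direction uses Lemma~\ref{lem:finite}: if $\sfJ \subseteq \sfL = \bigvee_{x\in\sfL}\thick(x)$ then $\sfJ$ lands in some finite join $\thick(x_1)\vee\cdots\vee\thick(x_n) = \thick(x_1\oplus\cdots\oplus x_n)$, and join-primeness of $\sfJ$ forces $\sfJ \subseteq \thick(x_i)$ for some $i$ — so $\{\sfJ \mid \sfJ \subseteq \sfL\} = \bigcap_{x\in\sfL} Z_x$ is closed in the $\MSpec$ topology. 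Hence the two topologies coincide.

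I expect the main obstacle to be bookkeeping the op's correctly — keeping straight that meet-prime in $T(\sfK)^\op$ is join-prime in $T(\sfK)$, and that the order reversal sends the closed sets $Z_l$ of Lemma~\ref{lem:sober} (phrased for $l \leq \mfp$) over to the downward-closed condition $\sfJ \subseteq \sfL$ — together with the slightly delicate point that although $\MSpec$ is defined using only \emph{finitely generated} join-primes and $\Spec(\lefter(T(\sfK)^\op))$ a priori sees all join-primes, the stated hypothesis collapses this distinction. Everything else is a direct application of the duality recalled in Section~2 (Stone duality, the meet-prime/point bijection, Lemma~\ref{lem:sober}) and the finiteness Lemma~\ref{lem:finite}; no new triangulated-category input beyond the definition of $\thick$ and closure of thick subcategories under finite sums is needed.
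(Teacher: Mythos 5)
Your identification of the underlying sets is the same as the paper's: by Stone duality and Lemma~\ref{lem:sober}, the points of $\Spec(\lefter(T(\sfK)^\op))$ are the meet-primes of $T(\sfK)^\op$, i.e.\ the join-primes of $T(\sfK)$, and the hypothesis makes these coincide with $\MSpec\sfK$. The gap is in the topology comparison, at the one step where something really has to be checked. You assert that a join-prime $\sfJ$ satisfies $\sfJ\subseteq\sfL$ iff $\sfJ\subseteq\thick(x)$ for \emph{every} $x\in\sfL$, and conclude $\{\sfJ\mid\sfJ\subseteq\sfL\}=\bigcap_{x\in\sfL}Z_x$. That biconditional is false: taking $x=0\in\sfL$ shows that $\bigcap_{x\in\sfL}Z_x=Z_{\bigcap_{x\in\sfL}\thick(x)}$ only contains join-primes inside $\thick(0)=0$, so this intersection is essentially empty while $\{\sfJ\mid\sfJ\subseteq\sfL\}$ can be large. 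What your own argument via Lemma~\ref{lem:finite} and join-primeness actually proves is the ``some'' version: if $\sfJ\subseteq\sfL$ then $\sfJ\subseteq\thick(x)$ for \emph{some} $x\in\sfL$ (indeed, under the standing hypothesis $\sfJ=\thick(j)$ with $j\in\sfL$, so $x=j$ works outright). Hence the correct identity is $\{\sfJ\mid\sfJ\subseteq\sfL\}=\bigcup_{x\in\sfL}Z_x$, a \emph{union} of basic closed sets; under your reading of the $\MSpec$ topology (the coarsest topology in which each $Z_x$ is closed, so that closed sets are intersections of finite unions of the $Z_x$) such a union is not automatically closed, and so the containment of topologies you need is not established by what you wrote.

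The repair is local but it is exactly the delicate point of the proposition (the paper's proof disposes of it in one clause: passing to $T(\sfK)^\op$ matches ``our'' closed sets with Matsui's). One has to argue that the closed subsets determined by the $Z_x$ are precisely the sets $Z_\sfL=\{\sfJ\in\MSpec\sfK\mid\sfJ\subseteq\sfL\}$ for $\sfL\in T(\sfK)$, which are the closed sets of $\Spec(\lefter(T(\sfK)^\op))$ you correctly computed from Lemma~\ref{lem:sober}. The ingredients are the ones you already have, but deployed for unions rather than intersections: under the hypothesis, for any class $C$ of objects one has $\bigcup_{x\in C}Z_x=Z_{\thick(C)}$ (finite generation of $\sfJ$ plus Lemma~\ref{lem:finite} plus join-primeness), while $Z_\sfL\cup Z_\sfM=Z_{\sfL\vee\sfM}$ (join-primeness) and $\bigcap_i Z_{\sfL_i}=Z_{\bigcap_i\sfL_i}$, so the family $\{Z_\sfL\mid\sfL\in T(\sfK)\}$ is stable under the relevant operations and is generated by the $Z_x$ in the appropriate sense. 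As written, your proof replaces this verification with a false intersection formula, so the key direction of the homeomorphism is unproved.
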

\begin{proof}
A join-prime in $T(\sfK)$ is precisely a meet-prime in $T(\sfK)^\op$ and passing from $T(\sfK)$ to $T(\sfK)^\op$ sends our closed subsets to Matsui's closed subsets. Thus $\MSpec \sfK$ is the collection of meet-prime elements of $T(\sfK)^\op$ together with the natural topology, or equivalently the spectrum of the spatial frame approximation $\lefter (T(\sfK)^\op)$. 
\end{proof}

\begin{rem}
We note that while join-primes are often finitely generated there is an asymmetry: in general meet-prime thick subcategories will \emph{not} be finitely generated. This is already the case for $\sfD^\mathrm{b}(\ZZ)$.
\end{rem}

We have the following theorem which covers the main examples of \cite{matsui2019}, for instance, tt-categories with noetherian spectrum where the unit generates (cf.\ Lemma~\ref{lem:unit}) and singularity categories of complete intersections.

\begin{thm}\label{thm:comparisonM}
If $T(\sfK)$ is a coherent frame and $\Spcnt\sfK^\vee$ is noetherian, then $\MSpec \sfK \cong \Spcnt\sfK^\vee$. 
\end{thm}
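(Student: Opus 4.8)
The plan is to identify both $\MSpec\sfK$ and $\Spcnt\sfK^\vee$ with the space of points of $(T(\sfK)^{\mathrm c})^\op$, where $T(\sfK)^{\mathrm c}$ denotes the bounded distributive lattice of finitely generated (equivalently, by Lemma~\ref{lem:compact}, compact) thick subcategories of $\sfK$. Since $T(\sfK)$ is a coherent frame, $\Spcnt\sfK = \Spec T(\sfK)$ is a spectral space whose lattice of quasi-compact opens is $T(\sfK)^{\mathrm c}$, so the Hochster dual $\Spcnt\sfK^\vee$ is defined; coherence also forces $1\in T(\sfK)^{\mathrm c}$, so $\sfK = \thick(g)$ for some object $g$.

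First I would pin down $\Spcnt\sfK^\vee$. For $l\in T(\sfK)^{\mathrm c}$ the set $Z(l) = \{\mfp \mid l\subseteq \mfp\}$ is closed in $\Spcnt\sfK$ and has quasi-compact open complement $U_l$, precisely because $l$ is compact; so the $Z(l)$ are basic opens of $\Spcnt\sfK^\vee$. Using that each $\mfp$ is meet-prime one checks $Z(l)\cup Z(m) = Z(l\wedge m)$, and trivially $Z(l)\cap Z(m) = Z(l\vee m)$, so $l\mapsto Z(l)$ is a homomorphism of bounded lattices $(T(\sfK)^{\mathrm c})^\op \to \mcO\Spcnt\sfK^\vee$. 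It is surjective because noetherianness of $\Spcnt\sfK^\vee$ forces every open to be a finite union of basic opens, hence of the form $Z(\bigwedge l_i)$ with $\bigwedge l_i\in T(\sfK)^{\mathrm c}$ (finite meets of compacts are compact by coherence); and it is injective because a coherent frame is spatial, so $l = \bigwedge\{\mfp \text{ meet-prime}\mid l\subseteq\mfp\}$. Hence $\mcO\Spcnt\sfK^\vee \cong (T(\sfK)^{\mathrm c})^\op$ and, by Stone duality, $\Spcnt\sfK^\vee \cong \pt\big((T(\sfK)^{\mathrm c})^\op\big)$. This is where the noetherian hypothesis does its work: it is exactly what collapses the ind-completion $\Ind\big((T(\sfK)^{\mathrm c})^\op\big)$ appearing in the general formula for the Hochster dual back down to $(T(\sfK)^{\mathrm c})^\op$ itself.

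Next I would match $\MSpec\sfK$ with the same space. The points of $\pt\big((T(\sfK)^{\mathrm c})^\op\big)$ are the meet-primes of $(T(\sfK)^{\mathrm c})^\op$, i.e.\ the join-prime elements of $T(\sfK)^{\mathrm c}$, and I claim these are exactly the finitely generated join-prime thick subcategories — the underlying set of $\MSpec\sfK$. A finitely generated join-prime of $T(\sfK)$ is visibly join-prime in the sublattice $T(\sfK)^{\mathrm c}$. Conversely, if $\mfj\in T(\sfK)^{\mathrm c}$ is join-prime in $T(\sfK)^{\mathrm c}$ and $\mfj\subseteq \bigvee_\lambda \sfM_\lambda$ in $T(\sfK)$, then writing each $\sfM_\lambda$ as the join of the compact thick subcategories it contains (so that $T(\sfK)$ is algebraic) and using compactness of $\mfj$ to pass to a finite subjoin, iterated join-primeness in $T(\sfK)^{\mathrm c}$ yields $\mfj\subseteq\sfM_\lambda$ for some $\lambda$; thus $\mfj$ is join-prime in $T(\sfK)$. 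Note this does \emph{not} say every join-prime of $T(\sfK)$ is finitely generated — it need not be — so one cannot simply invoke Proposition~\ref{prop:matsui}; but $\MSpec\sfK$ records only the finitely generated ones in any case. For the topologies, the basic closed sets $Z_x$ are closed under finite unions, since $Z_x\cup Z_y = Z_{x\oplus y}$ by join-primeness, and $Z_g = \MSpec\sfK$, so they form a basis of closed sets; hence the opens of $\MSpec\sfK$ are the unions of the sets $\{\mfj \mid \mfj\nsubseteq\thick(x)\}$ with $x\in\sfK$, which under the identification above are precisely the basic opens $\{\mfj\mid \mfj\nsubseteq a\}$, $a\in T(\sfK)^{\mathrm c}$, of $\pt\big((T(\sfK)^{\mathrm c})^\op\big)$. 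Composing the two identifications gives the homeomorphism $\MSpec\sfK\cong\Spcnt\sfK^\vee$.

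The main obstacle is the first identification: one must unwind the definition of the Hochster dual carefully and recognise that the noetherian hypothesis is exactly what makes $\mcO\Spcnt\sfK^\vee$ equal to $(T(\sfK)^{\mathrm c})^\op$ on the nose rather than its ind-completion, together with spatiality of coherent frames to get injectivity. After that the comparison with Matsui's space is essentially bookkeeping, the one point needing care being that $\MSpec\sfK$ detects only the finitely generated join-primes and that these coincide with the join-primes of $T(\sfK)^{\mathrm c}$ by algebraicity of $T(\sfK)$.
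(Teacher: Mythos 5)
Your proof is correct, but it takes a genuinely different route from the paper. The paper's proof is a two-line outsourcing: by Theorem~\ref{thm:class} (Stone duality for the coherent, hence spatial, frame $T(\sfK)$) the space $\Spcnt\sfK^\vee$ carries a classifying support datum in the sense of Matsui, and the homeomorphism is then exactly the conclusion of Matsui's Theorem~2.13. You instead give a self-contained lattice-theoretic argument, identifying both $\Spcnt\sfK^\vee$ and $\MSpec\sfK$ with $\pt\bigl((T(\sfK)^{\mathrm{c}})^{\mathrm{op}}\bigr)$: noetherianness and coherence give $\mcO\Spcnt\sfK^\vee \cong (T(\sfK)^{\mathrm{c}})^{\mathrm{op}}$, while algebraicity of $T(\sfK)$ shows the join-primes of the compact sublattice are exactly the finitely generated join-primes of $T(\sfK)$, with matching topologies. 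In effect you reprove the relevant instance of Matsui's theorem rather than citing it, which is more work but buys transparency: it isolates precisely where coherence (compacts form a bounded sublattice, $1$ compact), noetherianness (surjectivity of $l\mapsto Z(l)$), and spatiality (injectivity) are used, and it correctly sidesteps Proposition~\ref{prop:matsui}, whose hypothesis that \emph{all} join-primes be finitely generated need not hold here. The one step you should make explicit is the passage from $\mcO\Spcnt\sfK^\vee \cong (T(\sfK)^{\mathrm{c}})^{\mathrm{op}}$ to $\Spcnt\sfK^\vee \cong \pt\bigl((T(\sfK)^{\mathrm{c}})^{\mathrm{op}}\bigr)$, which requires $\Spcnt\sfK^\vee$ to be sober; this is the standard fact that the Hochster dual of a spectral space is again spectral (consistent with the description of $X^\vee$ as the points of a coherent frame in the remark after Definition~\ref{defn:Hochsterdual}), so it is not a gap, but it deserves a sentence.
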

\begin{proof}
Suppose that $T(\sfK)$ is a coherent frame such that $\Spcnt\sfK^\vee$ is noetherian. Then, by Stone duality (see Theorem~\ref{thm:class}), $\Spcnt\sfK^\vee$ gives a classifying support datum for $\sfK$ in the sense of \cite{matsui2019}*{Definition~2.9}. Mastui's Theorem~2.13 thus provides the claimed homeomorphism.
\end{proof}

\begin{rem}
The Theorem, when combined with our Theorem~\ref{thm:comparisonB} and discussion of functoriality, complements \cite{matsui2019} by expressing part of the functoriality of the construction and making explicit the general comparison with Balmer's theory.
\end{rem}

\subsection{The comparison with the ``noncommutative tt-spectrum''}

Finally, we discuss a potential lattice theoretic approach to \cite{nakano2019} and the corresponding comparison with the non-tensor spectrum. Let $(\sfR,\otimes,\unit)$ be a monoidal triangulated category, such that the tensor product is exact in each variable. We do not assume that $\sfR$ is symmetric, or even braided, monoidal. By $\otimes$-ideal in this section we will always mean $2$-sided $\otimes$-ideal.

Following noncommutative ring theory and \cite{nakano2019} we say that a 2-sided thick $\otimes$-ideal $\sfJ$ of $\sfR$ is \emph{semiprime} if given $r\in \sfR$ then $r\otimes s \otimes r\in \sfJ$ for all $s\in \sfR$ implies that $r\in \sfJ$. Given a 2-sided $\otimes$-ideal $\sfI$ we denote by $\sqrt{\sfI}$ the smallest semiprime $\otimes$-ideal containing $\sfI$. Let us denote by $\sfT^\otimes(\sfR)$ the complete lattice of semiprime $\otimes$-ideals of $\sfR$, where the ordering is via inclusion and arbitrary meets are given by intersection. This is not in conflict with our earlier notation, as semiprime reduces to radical in the symmetric setting.

A proper 2-sided $\otimes$-ideal $\sfP$ is \emph{prime} in the sense of \cite{nakano2019} if for 2-sided $\otimes$-ideals $\sfI$ and $\sfJ$ we have $\sfI\otimes \sfJ \subseteq \sfP$ implies $\sfI\subseteq \sfP$ or $\sfJ\subseteq \sfP$. 

\begin{rem}
In \cite{nakano2019} a semiprime $\otimes$-ideal is defined to be an intersection of prime $\otimes$-ideals and their Theorem~3.4.2 shows the equivalence of that definition with the one given above. In particular, it is not clear from the definitions above that prime $\otimes$-ideals are semiprime, but they are.
\end{rem}

For us, the point is that this notion of prime is related to the corresponding lattice theoretic notion.

\begin{lem}\label{lem:rad}
Let $\sfI,\sfJ\in T^\otimes(\sfR)$. Then
\begin{displaymath}
\sqrt{\thick(\sfI\otimes \sfJ)} = \sfI \cap \sfJ = \sqrt{\thick(\sfJ\otimes \sfI)}.
\end{displaymath}
\end{lem}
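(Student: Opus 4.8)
\emph{Proof plan.} The plan is to prove the two displayed equalities by establishing each inclusion separately, using nothing more than the defining closure properties of two‑sided $\otimes$‑ideals together with the definition of semiprime. Throughout, $\sfI\otimes\sfJ$ is read as the class $\{a\otimes b\mid a\in\sfI,\ b\in\sfJ\}$.

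First I would record two preliminaries. Since $\sfI$ is a right $\otimes$‑ideal and $\sfJ$ a left $\otimes$‑ideal, for $a\in\sfI$ and $b\in\sfJ$ the object $a\otimes b$ lies in both $\sfI$ and $\sfJ$; as $\sfI\cap\sfJ$ is thick this gives $\thick(\sfI\otimes\sfJ)\subseteq\sfI\cap\sfJ$, and symmetrically $\thick(\sfJ\otimes\sfI)\subseteq\sfI\cap\sfJ$. Moreover the class $\sfI\otimes\sfJ$ is closed under tensoring by an arbitrary object on either side (using that $\sfI$ is a left and $\sfJ$ a right $\otimes$‑ideal: $r\otimes a\otimes b=(r\otimes a)\otimes b$ with $r\otimes a\in\sfI$, and $a\otimes b\otimes r=a\otimes(b\otimes r)$ with $b\otimes r\in\sfJ$), so by the thick‑ideal analogue of \cite{StevensonActions}*{Lemma~3.8} the subcategory $\thick(\sfI\otimes\sfJ)$ is a genuine two‑sided thick $\otimes$‑ideal, and likewise for $\thick(\sfJ\otimes\sfI)$; in particular the radicals in the statement are defined.

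Next I would observe that $\sfI\cap\sfJ$ is itself semiprime: an intersection of two‑sided $\otimes$‑ideals is a two‑sided $\otimes$‑ideal, and if $r\otimes s\otimes r\in\sfI\cap\sfJ$ for all $s\in\sfR$, then $r\otimes s\otimes r\in\sfI$ for all $s$, whence $r\in\sfI$ by semiprimeness of $\sfI$, and similarly $r\in\sfJ$. Combined with the previous paragraph this already yields $\sqrt{\thick(\sfI\otimes\sfJ)}\subseteq\sfI\cap\sfJ$ and $\sqrt{\thick(\sfJ\otimes\sfI)}\subseteq\sfI\cap\sfJ$, since each radical is by definition the smallest semiprime $\otimes$‑ideal containing its argument. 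For the reverse inclusions I would argue directly from the semiprime condition: given $x\in\sfI\cap\sfJ$ and $s\in\sfR$, write $x\otimes s\otimes x=(x\otimes s)\otimes x$ with $x\otimes s\in\sfI$ (right‑ideal property) and $x\in\sfJ$, so $x\otimes s\otimes x\in\thick(\sfI\otimes\sfJ)\subseteq\sqrt{\thick(\sfI\otimes\sfJ)}$; as this holds for every $s$ and $\sqrt{\thick(\sfI\otimes\sfJ)}$ is semiprime, $x\in\sqrt{\thick(\sfI\otimes\sfJ)}$. Writing instead $x\otimes s\otimes x=x\otimes(s\otimes x)$ with $x\in\sfJ$ and $s\otimes x\in\sfI$ (left‑ideal property) gives, by the same reasoning, $x\in\sqrt{\thick(\sfJ\otimes\sfI)}$. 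This furnishes $\sfI\cap\sfJ\subseteq\sqrt{\thick(\sfI\otimes\sfJ)}$ and $\sfI\cap\sfJ\subseteq\sqrt{\thick(\sfJ\otimes\sfI)}$, completing both equalities.

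I do not expect a serious obstacle here; the only points requiring care are bookkeeping the left‑ versus right‑ideal properties (so that $x\otimes s$ and $s\otimes x$ land in $\sfI$) and confirming that $\thick(\sfI\otimes\sfJ)$ is a two‑sided $\otimes$‑ideal so that its radical is defined in the sense of the paper. If one prefers, this last step can instead be bypassed by invoking the characterization of semiprime $\otimes$‑ideals as intersections of prime $\otimes$‑ideals (\cite{nakano2019}*{Theorem~3.4.2}) and noting that a prime $\otimes$‑ideal $\sfP$ contains $\thick(\sfI\otimes\sfJ)$ (equivalently $\thick(\sfJ\otimes\sfI)$) if and only if it contains $\sfI$ or $\sfJ$.
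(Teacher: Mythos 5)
Your proof is correct and follows essentially the same route as the paper: both inclusions are obtained from the ideal property (giving $\thick(\sfI\otimes\sfJ)\subseteq\sfI\cap\sfJ$ with $\sfI\cap\sfJ$ semiprime) and from the observation that $x\otimes s\otimes x\in\sfI\otimes\sfJ$ for $x\in\sfI\cap\sfJ$, which forces $x\in\sqrt{\thick(\sfI\otimes\sfJ)}$ by semiprimeness. The only differences are cosmetic: the paper handles the second equality by symmetry, while you argue it directly and additionally spell out details the paper leaves implicit (that $\sfI\cap\sfJ$ is semiprime and that $\thick(\sfI\otimes\sfJ)$ is a two-sided $\otimes$-ideal so its radical is defined).
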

\begin{proof}
By symmetry it is enough to prove the equality on the left. It is clear, by the ideal property, that $\sfI\otimes \sfJ$ is contained in $\sfI\cap \sfJ$. Since $\sfI\cap \sfJ$ is semiprime the left-hand side is contained in it.

Suppose, on the other hand, that $j\in \sfI\cap \sfJ$. Then, for any $r\in \sfR$, we have $j\otimes r\otimes  j \in \sfI\otimes \sfJ$. Hence $j\in \sqrt{\thick(\sfI\otimes \sfJ)}$ proving the claimed equality. 
\end{proof}

We will say a $2$-sided $\otimes$-ideal $\sfP$ is \emph{prime with respect to semiprime $\otimes$-ideals} if it is semiprime, and for semiprime $\otimes$-ideals $\sfI$ and $\sfJ$ we have $\sfI\otimes \sfJ \subseteq \sfP$ implies one of $\sfI$ or $\sfJ$ is contained in $\sfP$.

\begin{lem}\label{lem:sameprimes}
A 2-sided $\otimes$-ideal $\sfP$ is prime with respect to semiprime $\otimes$-ideals if and only if it is meet-prime in $T^\otimes(\sfR)$.
\end{lem}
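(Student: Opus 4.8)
The plan is to reduce the equivalence to Lemma~\ref{lem:rad}, which already contains all the substantive content. First I would isolate the one bookkeeping fact that makes the two conditions interchange: since $\sfP$ is a thick $\otimes$-ideal and is semiprime, for $\sfI,\sfJ\in T^\otimes(\sfR)$ one has
\[
\sfI\otimes\sfJ\subseteq\sfP \iff \sqrt{\thick(\sfI\otimes\sfJ)}\subseteq\sfP,
\]
the only nontrivial implication being that a semiprime $\otimes$-ideal containing a class of objects automatically contains the smallest semiprime $\otimes$-ideal generated by it, by minimality of the latter. I would also note at the outset that both notions in the statement are understood to include properness, $\sfP\subsetneq\sfR$, which is exactly the requirement $\sfP\neq 1$ in Definition~\ref{defn:meetprime}, and that both include $\sfP$ being semiprime, i.e.\ $\sfP\in T^\otimes(\sfR)$; these hypotheses match up immediately and need no further discussion.

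Next I would establish the key claim: for all $\sfI,\sfJ\in T^\otimes(\sfR)$,
\[
\sfI\otimes\sfJ\subseteq\sfP \iff \sfI\cap\sfJ\subseteq\sfP.
\]
For $\Rightarrow$, combine the displayed reformulation with Lemma~\ref{lem:rad}, which gives $\sqrt{\thick(\sfI\otimes\sfJ)}=\sfI\cap\sfJ$. For $\Leftarrow$, the $2$-sided $\otimes$-ideal property yields $\sfI\otimes\sfJ\subseteq\sfI\cap\sfJ$ directly, so any $\sfP$ containing $\sfI\cap\sfJ$ contains $\sfI\otimes\sfJ$.

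Finally, granting this claim, the defining condition for $\sfP$ to be prime with respect to semiprime $\otimes$-ideals — that $\sfI\otimes\sfJ\subseteq\sfP$ forces $\sfI\subseteq\sfP$ or $\sfJ\subseteq\sfP$, quantified over semiprime $\sfI,\sfJ$ — becomes verbatim the condition that $\sfI\cap\sfJ\subseteq\sfP$ forces $\sfI\subseteq\sfP$ or $\sfJ\subseteq\sfP$, quantified over $\sfI,\sfJ\in T^\otimes(\sfR)$; together with $\sfP\in T^\otimes(\sfR)$ and $\sfP\neq 1$ this is precisely meet-primeness of $\sfP$ in $T^\otimes(\sfR)$. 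I do not expect a genuine obstacle: the only point requiring care is the distinction between containment of a class of objects and containment of the semiprime $\otimes$-ideal it generates, which is exactly what semiprimeness of $\sfP$ lets one ignore, and everything geometric is already packaged in Lemma~\ref{lem:rad}.
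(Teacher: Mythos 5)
Your proposal is correct and follows essentially the same route as the paper's proof: both reduce the equivalence to the observation that, for semiprime thick $\sfP$, one has $\sfI\otimes\sfJ\subseteq\sfP$ if and only if $\sqrt{\thick(\sfI\otimes\sfJ)}\subseteq\sfP$, and then invoke Lemma~\ref{lem:rad} to replace $\sqrt{\thick(\sfI\otimes\sfJ)}$ by $\sfI\cap\sfJ$. Your explicit remarks on properness and on the containment $\sfI\otimes\sfJ\subseteq\sfI\cap\sfJ$ are minor elaborations of points the paper leaves implicit.
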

\begin{proof}

Suppose that $\sfP$ is semiprime. Given semiprime $\otimes$-ideals $\sfI$ and $\sfJ$ we have $\sfI \otimes \sfJ \subseteq \sfP$ if and only if $\sqrt{\thick(\sfI\otimes \sfJ)}$ is contained in $\sfP$, by virtue of $\sfP$ being semiprime and thick, i.e.\ if and only if $\sfI\cap \sfJ \subseteq \sfP$. Thus asking for $\sfI\otimes \sfJ$ contained in $\sfP$ to imply one of $\sfI$ or $\sfJ$ is contained in $\sfP$ is the same as asking for $\sfI\cap \sfJ$ to imply the same. 
\end{proof}

\begin{rem}
In the symmetric setting we have a good description of the radical and one can deduce that being prime with respect to semiprime $\otimes$-ideals is equivalent to being prime. However, in the noncommutative setting this is missing.
\end{rem}

The dual topology on $\CLat(T^\otimes(-),\Sierp)$, which we denote by $D(\CLat(T^\otimes(-),\Sierp))$ is given by declaring the closed subsets to be generated by the quasi-compact open subsets of $\CLat(T^\otimes(-),\Sierp)$. We note that if our space is spectral this is precisely Hochster duality. 

\begin{thm}\label{thm:comparisonN}
Suppose that every $\otimes$-ideal of $\sfR$ is semiprime. Then the noncommutative Balmer spectrum $\Spc^\mathrm{nc}$ of \cite{nakano2019} is given by
\begin{displaymath}
D\Spec (\lefter T^\otimes(\sfR)) = D\CLat(T^\otimes(\sfR),\Sierp).
\end{displaymath}
\end{thm}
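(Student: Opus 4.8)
The plan is to check that, under the standing hypothesis, the noncommutative Balmer spectrum $\Spc^{\mathrm{nc}}(\sfR)$ of \cite{nakano2019} coincides with $D\CLat(T^\otimes(\sfR),\Sierp)$ as a topological space; the equality $D\Spec(\lefter T^\otimes(\sfR)) = D\CLat(T^\otimes(\sfR),\Sierp)$ is immediate from the homeomorphism $\Spec\lefter L \cong \CLat(L,\Sierp)$ recorded after Lemma~\ref{lem:sober}, so there is nothing to prove there. I would proceed in two stages: first match the underlying sets, then match the topologies.

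For the sets: by the remark following Definition~\ref{defn:meetprime} the points of $\CLat(T^\otimes(\sfR),\Sierp)$ are exactly the meet-prime elements of $T^\otimes(\sfR)$, and by Lemma~\ref{lem:sameprimes} these are precisely the $2$-sided $\otimes$-ideals that are prime with respect to semiprime $\otimes$-ideals. Since by hypothesis every $2$-sided thick $\otimes$-ideal is semiprime, $T^\otimes(\sfR)$ is the lattice of \emph{all} $2$-sided thick $\otimes$-ideals, and the condition ``prime with respect to semiprime $\otimes$-ideals'' reduces verbatim to the notion of prime $\otimes$-ideal of \cite{nakano2019} recalled above (a meet-prime element is, by definition, distinct from the top element $\sfR$, i.e.\ proper). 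Hence $\Spc^{\mathrm{nc}}(\sfR) = \CLat(T^\otimes(\sfR),\Sierp)$ as sets.

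For the topology: since everything is semiprime, joins in $T^\otimes(\sfR)$ are computed as $\bigvee_i\sfJ_i = \thick(\bigcup_i\sfJ_i)$, and exactly as in Lemma~\ref{lem:finite} and Lemma~\ref{lem:compact} this exhibits $T^\otimes(\sfR)$ as an algebraic lattice whose compact elements are the principal $\otimes$-ideals $\langle a\rangle$, $a\in\sfR$. For a prime $\sfP$ one has $\langle a\rangle\subseteq\sfP$ if and only if $a\in\sfP$, so the basic open $U_{\langle a\rangle}$ of $\Spec\lefter T^\otimes(\sfR)$ is precisely the Balmer support $\supp(a) = \{\sfP\mid a\notin\sfP\}$. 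I would then identify the quasi-compact opens of $\Spec\lefter T^\otimes(\sfR)$ as exactly the finite unions of such $\supp(a)$: each $\supp(a)$ is quasi-compact by translating an inclusion $U_{\langle a\rangle}\subseteq\bigcup_i U_{m_i}=U_{\bigvee_i m_i}$ into the containment $\langle a\rangle\subseteq\bigvee_i m_i$ --- here one uses that $\bigvee_i m_i$ is semiprime and hence, by \cite{nakano2019}*{Theorem~3.4.2}, an intersection of primes, so $a\notin\bigvee_i m_i$ would yield a prime $\sfP\supseteq\bigvee_i m_i$ with $a\notin\sfP$, contradicting the inclusion --- and then invoking compactness of $\langle a\rangle$ in $T^\otimes(\sfR)$ to pass to a finite subfamily; conversely, every element of $\lefter T^\otimes(\sfR)$ has the form $U_m$ (the $U_l$ being closed under finite intersection and arbitrary union, since primes are meet-prime) and $U_m=\bigcup_{a\in m}U_{\langle a\rangle}$ by algebraicity, so a quasi-compact $U_m$ is a finite subunion. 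By construction $D\Spec\lefter T^\otimes(\sfR)$ then has as basis of closed sets exactly $\{\supp(a)\mid a\in\sfR\}$, which is the defining basis of closed sets for the topology on $\Spc^{\mathrm{nc}}(\sfR)$; hence the two topologies agree.

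The step requiring the most care --- and the only place where genuine input from \cite{nakano2019} enters, beyond unwinding definitions --- is the quasi-compactness of $\supp(a)$: it is there that the fact that semiprime $\otimes$-ideals are intersections of primes is needed to turn a purely lattice-theoretic inclusion $U_{\langle a\rangle}\subseteq U_{m}$ into the honest containment $\langle a\rangle\subseteq m$ of ideals. Everything else is bookkeeping with the definitions together with the algebraicity of $T^\otimes(\sfR)$ that the semiprimeness hypothesis supplies for free.
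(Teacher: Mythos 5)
Your proposal is correct and follows essentially the same route as the paper: the underlying sets are matched via Lemma~\ref{lem:sameprimes} (with the hypothesis making ``prime with respect to semiprime ideals'' coincide with the primes of \cite{nakano2019}), and the topologies via unwinding the definition of the dual topology. The paper compresses the topological comparison into ``unwinding the definitions, as for the Balmer spectrum''; your identification of the quasi-compact opens with finite unions of the supports $\supp(a)$, using algebraicity of $T^\otimes(\sfR)$ and the fact that semiprime ideals are intersections of primes, is precisely the content being left implicit there.
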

\begin{proof}
By Lemma~\ref{lem:sameprimes} the underlying set of $\Spc^{\mathrm{nc}}$ is precisely $\CLat(T^\otimes(-),\Sierp)$, and unwinding the definitions one sees that (as in the case of the Balmer spectrum) the topology is the dual one.
\end{proof}

\begin{rem}
By \cite{Nakano21}*{Proposition~4.1.1} if every object of $\sfR$ is left or right dualizable then every $\otimes$-ideal is semiprime and one can apply the theorem.
\end{rem}

With the assumption of Theorem~\ref{thm:comparisonN} $T^\otimes(\sfR)$ is a sublattice of $T(\sfR)$ and so there is a comparison map
\[
\Spcnt \sfR \to \CLat(T^\otimes(\sfR),\Sierp).
\]
However, as noted the topology of $\CLat(T^\otimes(\sfR),\Sierp)$ does not necessarily agree with that of $\Spc^\mathrm{nc}(\sfR)$. This can be compensated for in various cases. For instance, if $T^\otimes(\sfR) \to T(\sfR)$ preserves finite presentation, i.e.\ finite generation as an ideal implies finite generation as a thick subcategory, then there is an induced map
\[
D(\Spcnt \sfR) \to D\CLat(T^\otimes(\sfR),\Sierp) = \Spc^\mathrm{nc} \sfR,
\]
and in the case that $\Spc^\mathrm{nc} \sfR$ is spectral then we get a comparison map
\[
\Spcnt \sfR \to (\Spc^\mathrm{nc} \sfR)^\vee.
\]

\begin{rem}
The lattice-theoretic point of view on the noncommutative spectrum has also been considered in \cite{Mallick}. Mallick and Ray show that, provided every prime ideal is completely prime, the lattice of semiprime thick tensor ideals forms a coherent frame.
\end{rem}



\section{Examples}\label{sec:ex}
\setcounter{subsection}{1}

In this short section we discuss a few additional examples, which are the standard ones in various contexts, to illustrate the machinery.

\begin{ex}
Let us consider $\sfK = \sfD^\mathrm{perf}(R)$ for a commutative ring $R$. In this case, since the tensor unit $R$ generates and $\sfK$ is rigid, $T(\sfK) = T^\otimes(\sfK)$ is a coherent frame (cf.\ Lemma~\ref{lem:unit}). We have 
\[
\Spcnt \sfK \cong (\Spec R)^\vee
\]
due to Theorem~\ref{thm:comparisonB}. Indeed, since $T(\sfK)$ is already a coherent and hence spatial frame, we have $\lefter T(\sfK) = T(\sfK)$ and so
\[
\Spcnt \sfK = \Spec \lefter T (\sfK) \cong \Spec T(\sfK) = (\Spc \sfK)^\vee \cong (\Spec R)^\vee.
\] 
We know from Proposition~\ref{prop:L} that $\fSpcnt \sfK$ is $T(\sfK)$ with the topology generated by the opens $U_\sfM$ for $\sfM\in T(\sfK)$ a thick subcategory. There is a comparison map
\[
\Spcnt \sfK \to \fSpcnt\sfK, \quad \sfP \mapsto \sfP
\]
where we identify $\Spcnt \sfK$ with the set of meet-prime thick subcategories.
\end{ex}

\begin{rem}\label{rem:SvsfS}
Given any complete lattice $L$ there is an inclusion
\[
\CLat(L, \Sierp) \to \CjSLat(L, \Sierp)
\]
which is a continuous map with respect to the corresponding topologies. Thus there is always a comparison map $\Spcnt \sfK \to \fSpcnt \sfK$ as described in the above example, and using the identification of Proposition~\ref{prop:L} this map is just the inclusion of the meet-prime elements of $L$ into $L$. 
\end{rem}

\begin{ex}
Suppose that $(R,\mfm,k)$ is an isolated local complete intersection of codimension $c$, and let $\sfK = \sfD_\mathrm{sg}(R)$ be the singularity category. It is proved in \cite{Stevensonclass} that $T(\sfK)$ is the coherent frame of Thomason subsets of $\PP^{c-1}_k$. Thus $(\Spcnt \sfK)^\vee \cong \PP^{c-1}_k$ as topological spaces. 
\end{ex}

\begin{ex}\label{ex:pp1}
Consider $\sfK = \sfD^\mathrm{b}(\PP^1_k)$, the bounded derived category of coherent sheaves on $\PP^1_k$ for some field $k$, whose corresponding lattice $T(\sfK)$ was discussed in Example~\ref{ex:notmodular}. We claim that $\Spcnt \sfK = \pt T(\sfK) = \varnothing$. Indeed, given distinct integers $i,j,$ and $k$ the lattice $T(\sfK)$ contains a bounded sublattice
\begin{displaymath}
	\begin{tikzpicture}
    
		\node (v0) at (0,0) {};
    \node (va) at (-2,2) {};
    \node (vb) at (0,2) {};
		\node (vc) at (2,2) {};
		\node (v1) at (0,4) {};
    
    \draw[fill] (v0)  circle (2pt) node [below] {0};
		\draw[fill] (va)  circle (2pt) node [left] {$\langle\mcO(i)\rangle$};
		\draw[fill] (vb)  circle (2pt) node [right] {$\langle\mcO(j)\rangle$};
		\draw[fill] (vc)  circle (2pt) node [right] {$\langle\mcO(k)\rangle$};
		\draw[fill] (v1)  circle (2pt) node [above] {$\sfD^\mathrm{b}(\PP^1_k)$};

		\path[-] (v0) edge  node [above] {} (va);
		\path[-] (v0) edge  node [above] {} (vb);
		\path[-] (v0) edge  node [above] {} (vc);
		\path[-] (va) edge  node [above] {} (v1);
		\path[-] (vb) edge  node [above] {} (v1);
		\path[-] (vc) edge  node [above] {} (v1);

  \end{tikzpicture}
\end{displaymath}
where $\langle\mcO(i)\rangle$ is shorthand for $\thick(\mcO(i))$. The claim that $\Spcnt \sfK$ is empty then follows from Lemma~\ref{lem:serialkiller}.
\end{ex}

\begin{ex}\label{ex:bicycle}
Let $\Lambda$ be the path algebra of the $2$-cycle 
\[
\xymatrix{
1 \ar[rr]<0.75ex>^-{\alpha} \ar@{<-}[rr]<-0.75ex>_-{\beta} && 2
}
\]
modulo the square of the radical, so $\Lambda$ is a $4$-dimensional self-injective algebra with $2$-dimensional projective right modules $P_1$ and $P_2$ (also known as the preprojective algebra of type $A_2$). We consider $\sfK = \sfD^\mathrm{perf}(A)$. The lattice $T(\sfK)$ is 
\begin{displaymath}
	\begin{tikzpicture}
    
		\node (v0) at (0,0) {};
    \node (va) at (-3,2) {};
    \node (vb) at (-1,2) {};
		\node (vc) at (1,2) {};
		\node (vd) at (3,2) {};
		\node (v1) at (0,4) {};
    
    \draw[fill] (v0)  circle (2pt) node [below] {0};
		\draw[fill] (va)  circle (2pt) node [left] {$\langle P_1 \rangle$};
		\draw[fill] (vb)  circle (2pt) node [right] {$\langle P_2 \rangle$};
		\draw[fill] (vc)  circle (2pt) node [right] {$\langle A \rangle$};
		\draw[fill] (vd)  circle (2pt) node [right] {$\langle B \rangle$};
		\draw[fill] (v1)  circle (2pt) node [above] {$\sfK$};

		\path[-] (v0) edge  node [above] {} (va);
		\path[-] (v0) edge  node [above] {} (vb);
		\path[-] (v0) edge  node [above] {} (vc);
		\path[-] (v0) edge  node [above] {} (vd);
		\path[-] (va) edge  node [above] {} (v1);
		\path[-] (vb) edge  node [above] {} (v1);
		\path[-] (vd) edge  node [above] {} (v1);
		\path[-] (vc) edge  node [above] {} (v1);

  \end{tikzpicture}
\end{displaymath}
where $A = P_1 \stackrel{\alpha}{\to} P_2$ and $B = P_2 \stackrel{\beta}{\to} P_1$. It again follows from Lemma~\ref{lem:serialkiller} that $\Spcnt \sfK = \varnothing$.
\end{ex}

These last two examples seem quite typical: when the lattice of thick subcategories has some `combinatorial component', i.e.\ a non-distributive piece, it often seems to be embedded such that $\Spcnt$ disintegrates. This is also the case, for instance, for the bounded derived category of an elliptic curve or for the bounded derived category of the path algebra of $A_n$ for $n\geq2$. However, one can dodge this issue and somehow separate the topological and combinatorial components, so to speak, in certain cases.

\begin{ex}\label{ex:serre1}
Consider again the case of $\sfK = \sfD^\mathrm{b}(\PP^1_k)$, which has Serre functor $S = (-)\otimes \Sigma \mcO(-2)$. We can consider the bounded sublattice $T^S(\sfK)$ of $\sfK$ consisting of those thick subcategories which are closed under $S$. One easily checks that $T^S(\sfK) = T^\otimes(\sfK)$ which is isomorphic to the coherent frame of Thomason subsets of $\PP^1_k$. Thus we recover $\PP^1_k$, up to Hochster duality, from the Serre functor invariant thick subcategories.
\end{ex}

\begin{ex}\label{ex:serre2}
Let $\Lambda$ be the algebra of Example~\ref{ex:bicycle} and $\sfK = \sfD^\mathrm{perf}(\Lambda)$. The Serre functor $S$ of $\sfK$ is given by the Nakayama automorphism which interchanges $P_1$ and $P_2$. An easy computation shows that $T^S(\sfK) \cong \Sierp$, i.e.\ the only Serre functor invariant thick subcategories are $0$ and $\sfK$. Thus $\Spec T^S(\sfK) \cong \ast$ consists of a single point.
\end{ex}

This example generalizes, for instance, to the situation of $\sfD^\mathrm{b}(kQ)$ for $Q$ a quiver of type $A_n$ or $D_n$ where the only Serre functor invariant thick subcategories are also $0$ and the whole category.

These examples are discussed a little further in Section~\ref{ssec:serre}.



\section{Our ignorance}\label{sec:questions}

There is a lot we don't know, and so we don't attempt to give a particularly exhaustive list of open problems.  Let us rather make a few further comments and pose a few concrete challenges (beyond the obvious, standard, and vexing challenge of performing meaningful computations in this setting).

\subsection{Realization}

It is natural to ask which lattices are of the form $T(\sfK)$ for an essentially small triangulated category $\sfK$.

\begin{qn}
Given a complete lattice $L$ is there a triangulated category $\sfK$ such that $T(\sfK)\cong L$?
\end{qn}

The answer to this question is already known in the nicest possible case.

\begin{prop}
Suppose that $F$ is a coherent frame. Then there is a commutative ring $R$ such that $F \cong T(\sfD^\mathrm{perf}(R))$.
\end{prop}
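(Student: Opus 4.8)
The plan is to realize $F$ as the lattice $\Thick(\sfD^\mathrm{perf}(R))$ by routing through spectral spaces: first I would pin down what $\Thick(\sfD^\mathrm{perf}(R))$ is as a spatial frame, and then I would invoke Hochster's theorem to hit exactly the space I need.

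\emph{Identifying the target.} For any commutative ring $R$ the tensor unit $R$ generates $\sfD^\mathrm{perf}(R)$, so by Lemma~\ref{lem:unit} every thick subcategory is a radical $\otimes$-ideal and $T(\sfD^\mathrm{perf}(R)) = \Thick(\sfD^\mathrm{perf}(R))$ is the coherent frame of Thomason subsets of $\Spec R$. Equivalently, under Stone duality this frame is $\mcO\big((\Spec R)^\vee\big)$, the opens of the Hochster dual of $\Spec R$; this is the perspective of the remark following Definition~\ref{defn:Hochsterdual}, and it is what underlies the computation $\Spcnt \sfD^\mathrm{perf}(R) \cong (\Spec R)^\vee$ in Section~\ref{sec:ex}. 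So it suffices to produce a commutative ring $R$ together with a homeomorphism $(\Spec R)^\vee \cong \pt(F)$, since $F \cong \mcO(\pt F)$ by spatiality of coherent frames.

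\emph{Applying Hochster's theorem.} Since $F$ is coherent, $Y := \pt(F)$ is a spectral space by Definition~\ref{defn:spectral}. Its Hochster dual $Y^\vee$ is again spectral, so by Hochster's theorem \cite{Hochster} there is a commutative ring $R$ with $\Spec R \cong Y^\vee$ as topological spaces. As $(-)^\vee$ is an involution on spectral spaces, this yields $(\Spec R)^\vee \cong (Y^\vee)^\vee \cong Y = \pt(F)$; applying $\mcO$ and combining with the previous step gives $T(\sfD^\mathrm{perf}(R)) \cong \mcO\big((\Spec R)^\vee\big) \cong \mcO(Y) \cong F$, as required.

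\emph{Expected difficulty.} I do not anticipate a genuine obstacle: the mathematical content is carried entirely by Hochster's theorem together with Stone and Hochster duality, all of which are available. The only real care needed is bookkeeping with the various $(-)^\vee$ and $(-)^\op$ — in particular keeping straight that $\Thick(\sfD^\mathrm{perf}(R))$ lives on the Hochster-dual side of $\Spec R$ rather than on $\Spec R$ itself, and that Hochster's realization result is asserted merely as a homeomorphism of spaces (which is all the first step requires). A minor point worth flagging along the way is that $(-)^\vee$ is indeed an involution on spectral spaces, which is standard but should be cited.
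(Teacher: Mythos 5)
Your proposal is correct and follows essentially the same route as the paper: apply Hochster's theorem to realize the Hochster dual of $\pt(F)$ as $\Spec R$, then use Thomason's classification together with the fact that the tensor unit generates $\sfD^\mathrm{perf}(R)$ (Lemma~\ref{lem:unit}) to identify $T(\sfD^\mathrm{perf}(R))$ with $F$. The only difference is that you spell out the Stone-duality and Hochster-duality bookkeeping that the paper leaves implicit, which is fine.
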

\begin{proof}
By \cite{Hochster} there is a commutative ring $R$ such that $\Spec R \cong (\Spec F)^\vee$. By Thomason's theorem \cite{Thomclass} we have $\Spc \sfD^\mathrm{perf}(R) \cong \Spec R$. As the perfect complexes are generated by the tensor unit $R$, this gives $F \cong T(\sfD^\mathrm{perf}(R))$. 
\end{proof}

However, we do not (to the authors' knowledge) know the answer for any more general class of lattices that isn't obtained from the above by some sleight of hand.

\subsection{Serre functor invariant thick subcategories}\label{ssec:serre}

We learn from tt-geometry that, for a tt-category $\sfK$, although $T(\sfK)$ might be non-distributive and resist our attempts to understand it, the lattice $T^\otimes(\sfK)$ is as nice as can be and is accessible via Stone duality. We are led to ask if there are other natural ways to produce sublattices of $T(\sfK)$ which are spatial frames?

We have seen in Examples~\ref{ex:serre1} and \ref{ex:serre2} that sometimes taking the lattice of Serre functor invariant thick subcategories $T^S(-)$ accomplishes this. This point of view is actually related to tt-geometry.

\begin{ex}
Suppose that $X$ is a smooth irreducible projective scheme. Then $\sfD^\mathrm{perf}(X)$ is a tt-category and has a Serre functor $S$ which can be described as $(-)\otimes \Sigma^{\dim X}\omega_X$ where $\omega_X$ is the canonical bundle. Thus, being a Serre functor invariant thick subcategory just means being closed under tensoring with powers of $\omega_X$.

If $\omega_X$ or $\omega_X^{-1}$ is ample this is nothing but being a tensor ideal. Thus in this setting $T^S(\sfK) = T^\otimes(\sfK)$ is a coherent frame, and we see that this lattice is actually intrinsic to $\sfD^\mathrm{perf}(X)$ and not dependent on the monoidal structure. This is closely related to, and gives another route to, the reconstruction theorem of Bondal and Orlov \cite{BOreconstruction}.
\end{ex}

It is not always the case for an essentially small triangulated category $\sfK$ with a Serre functor $S$ that $T^S(\sfK)$ is distributive. If $\sfK$ is Calabi-Yau, i.e.\ $S\cong \Sigma^n$ for some $n$, then $T^S(\sfK) = T(\sfK)$ and there are examples in which this lattice is non-distributive. For instance this happens for the bounded derived category of an ellptic curve and in discrete cluster categories of infinite type A \cite{GratzZvonareva}.

On the other hand small examples from the representation theory of finite dimensional algebras, e.g.\ Example~\ref{ex:serre2}, are quite suggestive and motivate the following question.

\begin{qn}
Let $\sfK$ be an essentially small triangulated category.
\begin{itemize}
\item[i)] If $\sfK$ has a Serre functor $S$ under which conditions is $T^S(\sfK)$ distributive?
\item[ii)] Can we identify, through other means, `interesting' distributive sublattices of $T(\sfK)$? For instance, are there interesting maximal bounded distributive sublattices?
\end{itemize}
\end{qn}

\begin{ex}
If $E$ is an elliptic curve over a field $k$ then the maximal distributive sublattices of $\sfD^\mathrm{b}(E)$ are all isomorphic to one of $\Sierp \times \Sierp$ or $T^\otimes(\sfD^\mathrm{b}(E))$. This seems meaningful.
\end{ex}



\bibliography{greg_bib}

\end{document}